\documentclass[11pt]{amsart}
\usepackage{geometry}                
\geometry{a4paper}                   

\parskip = 2mm

\usepackage{graphicx}
\usepackage{subfig}
\usepackage{psfrag}
\usepackage{labelfig}
\usepackage{amssymb}
\usepackage{epstopdf}
\usepackage{color}
\DeclareGraphicsRule{.tif}{png}{.png}{`convert #1 `dirname #1`/`basename #1 .tif`.png}

\newtheorem{theorem}{Theorem}[section]    
\newtheorem{lemma}[theorem]{Lemma}          
\newtheorem{proposition}[theorem]{Proposition}  
\newtheorem{claim}[theorem]{Claim}  
\newtheorem{corollary}[theorem]{Corollary} 
\newtheorem{definition-prop}[theorem]{Proposition-Definition}

\theoremstyle{definition}
\newtheorem{definition}[theorem]{Definition}

\newtheorem{convention}[theorem]{Convention}
\newtheorem{observation}[theorem]{Observation}
 
\newtheorem{remark}[theorem]{Remark}
\newtheorem{example}[theorem]{Example}   
\newtheorem*{remark-unnumbered}{Remark}
\numberwithin{equation}{section}

\newcommand{\e}{\varepsilon}

\newcommand{\F}{\mathcal F_{ob} }
\newcommand{\cF}{\mathcal F_\xi }
\newcommand{\Int}{\textrm{Int}}
\newcommand{\id}{\textrm{id}}
\newcommand{\R}{\mathbb{R}}
\newcommand{\Q}{\mathbb{Q}}
\newcommand{\Z}{\mathbb{Z}}
\newcommand{\MCG}{\mathcal{MCG}}
\newcommand{\Aut}{{\rm Aut}}
\newcommand{\Val}{{\rm{Val}}}
\newcommand{\Hyp}{{\rm{Hyp}}}
\newcommand{\mF}{\mathcal{F} }

\newcommand{\sgn}{{\tt sgn}}


\title{Essential open book foliations and fractional Dehn twist coefficient}
\author{Tetsuya Ito}
\address{Research Institute for Mathematical Sciences, Kyoto university
Kyoto, 606-8502, Japan}
\email{tetitoh@kurims.kyoto-u.ac.jp}
\urladdr{http://www.kurims.kyoto-u.ac.jp/~tetitoh/}

\author{Keiko Kawamuro}
\address{Department of Mathematics \\ 
The University of Iowa \\ Iowa City, IA 52242, USA}
\email{kawamuro@iowa.uiowa.edu}
\date{\today} 
\subjclass[2010]{Primary~57M50 
, Secondary~53D35, 57R17}

\begin{document}

\begin{abstract}
We introduce essential open book foliations by  refining open book foliations, 
and develop technical estimates of the fractional Dehn twist coefficient (FDTC) of monodromies and the FDTC for {\em closed braids}, which we introduce as well.

As applications, we quantitatively study the `gap' between overtwisted contact structures and non-right-veering monodromies.  
We give sufficient conditions for a $3$-manifold to be irreducible and atoroidal. 
We also show that the geometries of a $3$-manifold and  the complement of a closed braid are determined by the Nielsen-Thurston types of the monodromies of their open book decompositions. 
\end{abstract}

\maketitle

\section{Introduction}

Let $S=S_{g, d}$ be a compact oriented genus $g$ surface with $d\neq 0$ boundary components. 
Let $\Aut(S, \partial S)$ be the group of isotopy classes of diffeomorphisms of $S$ fixing the boundary $\partial S$ pointwise. 
Abusing notation we will often regard $\phi \in \Aut(S, \partial S)$ as a diffeomorphism representing $\phi$. 
Suppose that $(S,\phi)$ is an open book decomposition of a closed oriented $3$-manifold $M$. 
Consider a compact oriented surface $F$ in $M$ possibly with  boundary. 
Generic intersection of $F$ and the pages of the open book yields a singular foliation on $F$. 
If it satisfies certain conditions we call it {\em open book foliation}  and denote it by $\F(F)$ \cite{ik1-1}. 
Open book foliations has their origin in Bennequin's work \cite{Ben} and Birman and Menasco's braid foliations \cite{BM4, BM2, BM5, BM1, BM6, BM3, BM7, bm1, bm2}, where the underlying manifold $M$ is $S^3$.

Let us recall Giroux's seminal result \cite{g}: 
For a closed oriented $3$-manifold $M$ there exists a one-to-one correspondence between open book decompositions of $M$ up to positive stabilization and  contact structures on $M$ up to contact isotopy. 
Due to the Giroux-correspondence open book foliations have extensive applications to contact geometry. 
For instance, in \cite{ik1-2} we give an alternative proof to  Honda, Kazez and Mati\'c's theorem: The contact structure $(M, \xi)$ is tight if and only if every open book supporting $(M, \xi)$ is right-veering \cite{hkm1}.

Now the notion ``{\em right-veering}'' is a key to tight contact structures. 
For every boundary component $C\subset \partial S$ and every essential arc $\gamma \subset S$ starting at $C$ if $\phi(\gamma)$ lies on the right of $\gamma$ near the starting point, we say that $\phi$ is right-veering. 
As a highly effective tool to detect right-veering-ness Honda, Kazez and Mati\'c in \cite{hkm1} introduce the {\em fractional Dehn twist coefficient}. 
It measures how much $\phi \in \Aut(S, \partial S)$ contributes twisting along a boundary component $C$ and is denoted by $c(\phi, C) \in \Q$.
They show that positivity of $c(\phi, C)$ is almost  equivalent to right-veering-ness of $\phi$.

In this paper we introduce ({\em strongly}) {\em essential} open book foliations and establish relationship between  fractional Dehn twist coefficients and  essential open book foliations. 
As applications we obtain results in contact $3$-manifolds and topology and geometry of $3$-manifolds.

In \cite{ik1-1,ik1-2} we start with an open book $(S,\phi)$ with certain properties (eg. non-right-veeringness) and construct surfaces $F$ in $M_{(S,\phi)}$ (eg. transverse overtwisted discs and Seifert surfaces of closed braids).
The combinatorial nature of the open book foliation $\F(F)$ enables us to extract various properties of the underlying contact manifold $(M_{(S,\phi)}, \xi_{(S, \phi)})$ or the closed braid $\partial F$, such as overtwistedness or the self-linking number.

In this paper we explore the converse: 
We start with a surface admitting an open book foliation $\F(F)$ and read properties of the monodromy $\phi$ like the fractional Dehn twist coefficient $c(\phi, C)$.
For this purpose, a general open book foliation often contains excessive information.  
We define a (strongly) essential open book foliation as an optimal  foliation for sharper estimates of $c(\phi, C)$.
It is similar to the convention that we use  incompressible surfaces rather than generic surfaces in order to study topology/geometry of the ambient $3$-manifold. 
Our procedure breaks into three steps:
\begin{enumerate}
\item 
Replace a given open book foliation $\F$ with another one $\F'$ that reflects more properties of the monodromy $\phi \in \Aut(S, \partial S)$. 
\item 
Read properties of $\phi$ from $\F'$. 
\item
Find applications to contact geometry and topology/geometry of $3$-manifolds.
\end{enumerate}
In this paper we mainly treat Steps (2) and (3).
We discuss Step (2) in Section~\ref{sec:estimateFDTC} and Step (3) in Sections~\ref{sec6}-\ref{sec:geometry}. 

As for Step (1), typically, $\F$ is a generic open book foliation and $\F'$ is an essential one. In fact, in some cases even an essential open book foliation is insufficient for our purposes. 
To deal with the insufficiency we further introduce notions of {\em strongly} essential b-arcs and elliptic points.
In \cite{ik3} we study various techniques (some foliation moves and braid moves) to convert a given $\F$ to a better $\F'$.

The paper is organized as follows: 

In Section~\ref{sec:review} we review basic notions and facts about open book foliations.

In Section \ref{sec: essential ob foliation}  we introduce essential open book foliations, and show that every incompressible surface, modulo de-summing essential spheres, can admits an essential open book foliation (Theorem \ref{theorem:weak}). In particular, if $M$ is irreducible, up to isotopy every incompressible surface can admit an essential open book foliation.
We also introduce the notion of a {\em strongly essential} b-arc which plays a crucial role in large part of the paper.

In Section \ref{sec:FDTC} we study the fractional Dehn twist coefficient $c(\phi, C)$. 
We give a practical and efficient method to compute $c(\phi, C)$ which does not require Nielsen-Thurston classification (Theorem~\ref{theorem:computation}), and interpret $c(\phi, C)$ as a translation number of certain natural dynamics of $\Aut(S,\partial S)$ (Theorem~\ref{theorem:translation}). We also introduce a new notion, the fractional Dehn twist coefficient $c(\phi, L,C)$ for a {\em closed braid} $L$ in $(S,\phi)$.

Section \ref{sec:estimateFDTC} is the core of the paper.
In Theorem~\ref{theorem:estimate} we obtain lower and upper bounds of $c(\phi, C)$ by counting the number of singularities of a strongly essential open book foliation. 
We observe similar estimates for $c(\phi, L, C)$.

In Section 6 we characterize non-right-veering monodromies:  
We show that $\phi$ is non-right-veering if and only if there exists a special ``simplest'' transverse overtwisted disc (Theorem \ref{theorem:nrv}).
This highlights the difference between non-right-veering monodromy and overtwistedness: 

\vspace{2mm}

\noindent
\textbf{Corollary~\ref{cor:n(S,phi)}.}
{\em
There exists an invariant $n(S, \phi) \in \Z_{\geq 0}$ such that 
\begin{enumerate}
\item
$n(S, \phi) = 0$ if and only if $\xi_{(S, \phi)}$ is tight (and hence $\phi$ is right veering). 
\item 
$n(S, \phi) = 1$ if and only if $\xi_{(S, \phi)}$ is overtwisted and $\phi$ is not right veering. 
\item 
$n(S, \phi) \geq 2$ if and only if $\xi_{(S, \phi)}$ is overtwisted and $\phi$ is right veering. 
\end{enumerate}
}

\vspace{2mm}

In Section \ref{sec:topol} we study topology of $3$-manifolds. 
We obtain bounds of $|c(\phi, C)|$ from data on a closed incompressible surface embedded in $M_{(S,\phi)}$ (Theorems~\ref{theorem:surface} and \ref{theorem:surface_connected}). 
As a consequence, we find sufficient conditions on the monodromy $\phi$ that $M_{(S,\phi)}$ is irreducible and/or atoroidal (Corollaries~\ref{irreducible-theorem} and \ref{atoroidal-theorem}). 
Our atoroidality criterion can be refined if we assume $\xi_{(S, \phi)}$ is tight (Theorem~\ref{thm:tight-atoroidal}).

We also apply our technique to study the topology of null-homologous braids in open books. 
We relate the genus of a braid $L$ and $c(\phi, L, C)$ (Corollary~\ref{cor:lower bound of g}), which plays an important role in proving our main result Theorem~\ref{theorem:geometry}.

In Section \ref{sec:geometry} 
we prove the following result parallel to Thurston's classification of geometry of mapping tori \cite{T}. 
The same result holds for closed braid complements (Theorem~\ref{theorem:geometry-braid}).
\vspace{2mm}

\noindent
\textbf{Theorem~\ref{theorem:geometry}.}
{\em 
Let $(S,\phi)$ be an open book decomposition of 3-manifold $M$.
Assume that
\begin{itemize}
\item $\partial S$ is connected and $|c(\phi,\partial S )|>1$, or 
\item $|c(\phi, C)|>4$ for every boundary component $C$ of $S$.
\end{itemize}
Then,
\begin{enumerate}
\item $M$ is toroidal if and only if $\phi$ is reducible.
\item $M$ is hyperbolic if and only if $\phi$ is pseudo-Anosov.
\item $M$ is Seifert fibered if and only if $\phi$ is periodic. 
\end{enumerate}
}

\vspace{2mm}

Since every 3-manifold admits an open book decomposition with a pseudo-Anosov monodromy \cite{ga0} (in fact, more strongly, every contact structure is supported by an open book with pseudo-Anosov monodromy \cite{ch0}) our restriction on the FDTC is necessary.

The theory of essential lamination and taut foliation has been successful in topology/geometry of $3$-manifolds. 
Here, let us compare laminations/foliations and open book foliations. 
Interestingly, they tend to play complementary roles in the following sense: The existence of a taut foliation, an essential lamination or a genuine lamination in $M_{(S, \phi)}$ implies that the fundamental group of $M_{(S, \phi)}$ has various nice properties strongly reflecting the geometric structure (cf. \cite{gk}, \cite{go}). 
Constructing such `good' foliation/lamination in $M_{(S,\phi)}$ can often be done when $\phi$ is pseudo-Anosov. 
However, many toroidal manifolds are also equipped with `good' foliations/laminations.

On the other hand, open book foliations can be used to determine $M_{(S,\phi)}$ is toroidal or atoroidal as shown in Corollary~\ref{atoroidal-theorem}, Theorem~\ref{thm:tight-atoroidal} and Remark~\ref{3rd-atoridal-criterion}. 
While, at this writing, open book foliations say
 little about the fundamental group of $M_{(S,\phi)}$. We use Gabai and Oertel's work on essential laminations \cite{go} and open book foliations to prove Theorems~\ref{theorem:geometry} and \ref{theorem:geometry-braid}. 
One may consider that these theorems are fine examples of the above point of view. 

Thruston's hyperbolic Dehn surgery theorem states that, beside finitely many exceptions, called exceptional surgeries, a Dehn filling of a hyperbolic $3$-manifold yields a hyperbolic $3$-manifold. 
Our Theorem~\ref{theorem:geometry}-(2)
implies that Dehn fillings of the mapping torus of $\phi$ of slope $\frac{1}{k}$ are not exceptional if $|k|>1$ for connected boundary case and $|k|>4$ for general case. 
Note that the conditions of our Theorem~\ref{theorem:geometry} are given in terms of FDTCs, whereas usually exceptional surgery concerns the distance of slopes of two exceptional surgeries (see Gordon's lecture note \cite{gor} for exceptional surgeries).

Open book foliations give direct and elementary ways to analyze the contact structure $\xi_{(S, \phi)}$ and the manifold $M_{(S,\phi)}$ including reducible and toroidal ones, with or without essential laminations, and even without knowing the Nielsen-Thurston type of $\phi$.

\section{Quick review of open book foliation}
\label{sec:review}

In this section we review basic definitions and properties of  open book foliations. For details see \cite[\S 2]{ik1-1}.

Let $(S,\phi)$ be an open book decomposition of a closed oriented $3$-manifold $M$, where $S=S_{g, r}$ is a genus $g$ surface with $r$ boundary components, and $\phi \in {\rm Diff}^+(S, \partial S)$ is an orientation preserving diffeomorphism of $S$ fixing the boundary point-wise.   
The manifold $M$ is often denoted by $M_{(S, \phi)}$. 
Let $B$ denote the {\em binding} of the open book and $\pi:M \setminus B \rightarrow S^{1}$ the fibration whose fiber $S_{t} := \pi^{-1}(t)$ is called a {\em page} where the parameter $t \in [0, 1]/\sim \ = S^1$.

Let $\xi= \xi_{(S,\phi)} = \textrm{ker}\,\alpha$ be the contact structure on $M$ supported by $(S,\phi)$ through the Giroux-correspondence \cite{g}: That is, $\alpha>0$ on the binding $B$ and $d\alpha$ is a positive area form on each page $S_t$. See \cite{TW} for an explicit construction of such $\xi$. 
We say that an oriented link $L$ in $M_{(S,\phi)}$ is in {\em braid position} with respect to the open book $(S,\phi)$ if $L$ is disjoint from the binding and positively transverse to  every page $S_{t}$. 
The algebraic intersection number of $L$ and the page $S_0$ is called the {\em braid index} of $L$. 
Thanks to Bennequin~\cite{Ben}, Mitsumatsu and Mori~\cite{MM}, and Pavelescu~\cite{Pav, P2}, any transverse link in a contact 3-manifold $(M, \xi)_{(S, \phi)}$ can be transversely isotoped to a closed braid in $(S,\phi)$. Conversely, a closed braid in $(S,\phi)$ is naturally regarded as a transverse link in $(M,\xi)$. Hence from now on, we always assume that every (transverse) link is in braid position.

Throughout this paper, $L$ is a closed (possibly empty) braid in $(S, \phi)$ and 
$F$ is an oriented connected compact surface embedded in $M$ such that
\begin{itemize}
\item $F$ is a closed surface in $M \setminus L$, or  
\item $F$ is a Seifert surface of $L$, i.e., $\partial F=L$.
\end{itemize}
Consider the singular foliation $\mF=\mF(F)$ of $F$ induced by the intersections of the pages $\{S_t\}_{t\in [0,1]}$ and $F$. 
We call each connected component of $F \cap S_t$ a {\em leaf}.  

\begin{definition}\label{def of OBF}
We say that $\mF$ is an {\em open book foliation}, denoted by $\F(F)$, if the following four conditions are satisfied.
\begin{description}

\item[($\mathcal{F}$ i)]

The binding $B$ pierces the surface $F$ transversely in finitely many points. 
Moreover, $p \in B \cap F$ if and only if there exists a disc neighborhood $N_{p} \subset \Int(F)$ of $p$ on which the foliation $\mF(N_p)$ is radial with the node $p$ (see the top sketches in Figure~\ref{fig:sign}). 
We call the singularity $p$ an {\em elliptic} point.

\item[($\mathcal{F}$ ii)] 
The leaves of $\mF$ along $\partial F$ are transverse to $\partial F$. 

\item[($\mathcal{F}$ iii)] 
All but finitely many fibers $S_{t}$  intersect $F$ transversely.
Each exceptional fiber is tangent to $\Int(F)$ at a single point. 
In particular, $\mF$ has no saddle-saddle connections.

\item[($\mathcal{F}$ iv)] 
All the tangencies of $F$ and fibers are Morse type saddles (see the bottom sketches of Figure~\ref{fig:sign}). 
We call them {\em hyperbolic} points.
\end{description}
\end{definition}

\begin{definition}

We say that a leaf $l$ of $\F(F)$ is {\it regular} if $l$ does not contain a tangency point and is {\it singular} otherwise. 
The regular leaves are classified into the following three types:
\begin{enumerate}
\item[a-arc]: An arc one of whose endpoints lies on $B$ and the other lies on $L$.
\item[b-arc]: An arc whose endpoints both lie on $B$.
\item[c-circle]: A simple closed curve.
\end{enumerate} 
\end{definition}

\begin{theorem}\cite[Theorem 2.5, Proposition 2.6]{ik1-1}
\label{theorem:existence}
By isotopy that fixes the transverse link type of the boundary $\partial F$ $($if $\partial F$ exists$)$, every surface $F$ admits an open book foliation $\F(F)$. 
Moreover, given an open book foliation $\F(F)$ we can perturb $F$ (by fixing $\partial F$ if it exists) so that the new $\F(F)$ contains no c-circles.
\end{theorem}

We say that an elliptic point $p$ is {\em positive} (resp. {\em negative}) if the binding $B$ is positively (resp. negatively) transverse to $F$ at $p$.
The sign of a hyperbolic point $p$ is {\em positive} (resp. {\em negative}) if the orientation of the tangent plane $T_p F$ does (does not) agree with the orientation of $T_p S_t$. The sign of a singular point $x$ is denoted by $\sgn(x)$.
See Figure \ref{fig:sign}, where we describe an elliptic point by a hollowed circle with its sign inside, a hyperbolic point by a dot with the sign nearby, and positive normals $\vec n_F$ to $F$ by dashed arrows.
\begin{figure}[htbp]
 \begin{center}
\includegraphics[width=130mm]{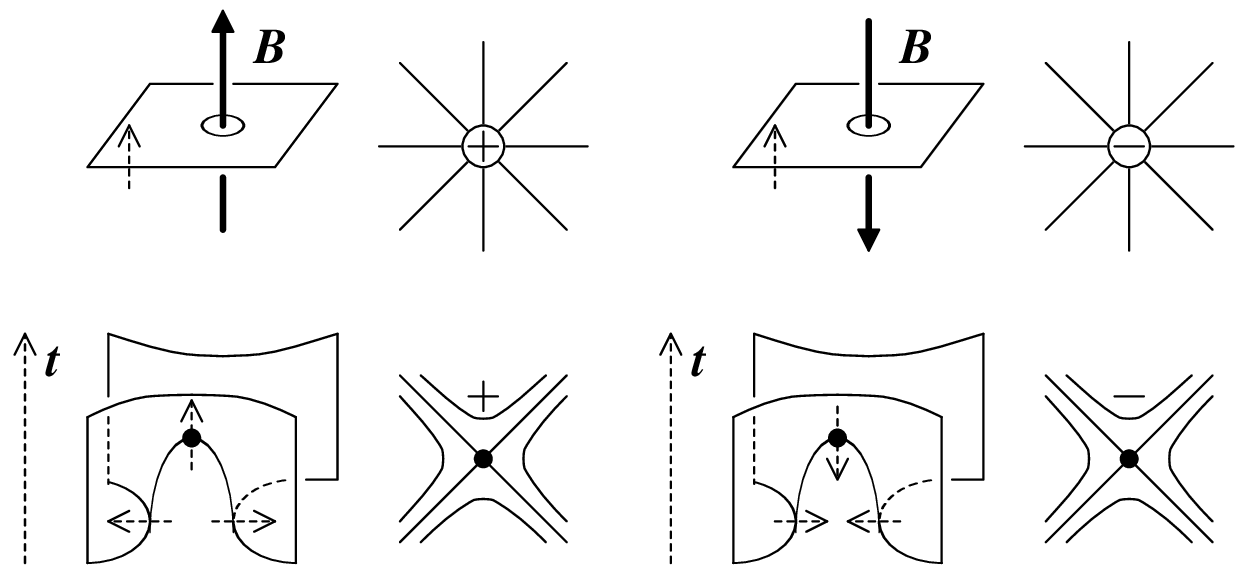}
 \end{center}
 \caption{(Top) $\pm$ elliptic points. (Bottom) $\pm$ hyperbolic points.}
  \label{fig:sign} 
\end{figure}

\begin{definition}\label{def of e}
We denote the number of positive (resp. negative) elliptic points of $\F(F)$ by $e_{+}(\F(F))$ (resp. $e_{-}(\F(F))$). Similarly, the number of positive (resp. negative) hyperbolic points is denoted by $h_{+}(\F(F))$ (resp. $h_{-}(\F(F))$). 
\end{definition}

Recall that the {\em characteristic foliation} $\cF(F)$ of an embedded surface $F \subset (M,\xi)$ is a singular foliation obtained by integrating the singular line filed $\xi \cap TF$ on $F$. 
The next theorem shows a close relation between $\F(F)$ and $\cF(F)$. 
More comparisons of $\F(F)$ and $\cF(F)$ can be found in \cite[Remark 2.22]{ik1-1}.

\begin{theorem}[Structural stability {\cite[Theorem 2.21]{ik1-1}}] \label{identity theorem}
Assume that a surface $F$ in $M_{(S, \phi)}$ admits an open book foliation $\F(F)$. 
There exists a contact structure $\xi$ on $M_{(S, \phi)}$ supported by the open book $(S, \phi)$ such that 
$e_{\pm}(\F(F))=e_{\pm}(\cF(F))$ and $h_{\pm}(\F(F))=h_{\pm}(\cF(F))$.

Moreover, if $\F(F)$ contains no $c$-circles then $\F(F)$ and $\cF(F)$ are topologically conjugate, namely there exists a homeomorphism of $F$ that takes $\F(F)$ to $\cF(F)$. 
\end{theorem}

The above result yields the following:

\begin{proposition}\label{sl-formula-1}
\cite[Propositions 2.11 and 3.2]{ik1-1}
Suppose that $F \subset M_{(S, \phi)}$ is a surface admitting an open book foliation. 
\begin{enumerate}
\item 
If $\partial F$ is non-empty then the self linking number is $$sl(\partial F, [F]) = -\langle e(\xi),[F] \rangle = -(e_{+}-e_{-})+(h_{+}-h_{-}).$$
\item 
The Euler characteristic is $\chi(F) = (e_+ + e_-) - (h_+ + h_-).$
\end{enumerate}
\end{proposition}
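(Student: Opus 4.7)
The natural approach is Poincar\'e--Hopf applied to the singular foliation $\F(F)$ for part (2), and a reduction to the characteristic foliation via Theorem~\ref{identity theorem} for part (1).

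First I would prove (2). Each elliptic singularity is a radial node of index $+1$ and each hyperbolic singularity is a saddle of index $-1$; c-circle leaves contribute no singularities. When $\partial F \ne \emptyset$, condition (OF~ii) ensures that the foliation meets $L=\partial F$ transversely, so no boundary singularities appear and the classical Poincar\'e--Hopf theorem gives
$$\chi(F) = (+1)(e_+ + e_-) + (-1)(h_+ + h_-) = (e_+ + e_-) - (h_+ + h_-).$$

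For (1), I would first invoke the standard contact-topological identity $sl(\partial F,[F]) = -\langle e(\xi),[F]\rangle$ for a transverse boundary, reducing the problem to showing $\langle e(\xi),[F]\rangle = (e_+-e_-)-(h_+-h_-)$. By Theorem~\ref{theorem:existence} I may isotope $F$ so that $\F(F)$ has no c-circles, and Theorem~\ref{identity theorem} then implies that, after a $C^\infty$-small perturbation, $\F(F)$ is topologically conjugate to the characteristic foliation $\cF(F)$ with the signed elliptic/hyperbolic counts preserved. For the characteristic foliation on a surface with transverse boundary, there is a standard decomposition $\langle e(\xi),[F]\rangle = \chi(R_+) - \chi(R_-)$ into the positive and negative regions. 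Applying Poincar\'e--Hopf inside each region (whose boundary is the dividing set, along which the foliation is tangent) gives $\chi(R_\pm) = e_\pm - h_\pm$, so
$$\langle e(\xi),[F]\rangle = (e_+-h_+) - (e_--h_-) = (e_+-e_-) - (h_+-h_-),$$
which combined with the self-linking identity yields the claim.

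The main obstacle is the sign-matching step: verifying that the open book foliation conventions (the sign of an elliptic point from the transverse direction of $B$, and the sign of a hyperbolic point from whether $\vec n_F$ agrees with $\partial_t$) coincide with the positive/negative region labels of $\cF(F)$, so that $R_+$ really contains exactly the $e_+$ positive elliptic and $h_+$ positive hyperbolic singularities. This should follow from a local model check at each singularity type, using that $d\alpha$ is a positive area form on each page so that the Reeb direction is $\partial_t$; the rest of the argument is routine bookkeeping already packaged in Theorem~\ref{identity theorem}.
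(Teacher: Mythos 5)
Your approach matches the paper's: the proposition is cited from~\cite{ik}, and the present paper's ``The above result yields the following'' makes explicit that it is meant to follow from the structural-stability theorem (Theorem~\ref{identity theorem}) exactly as you do—reduce to $\cF(F)$, then invoke the classical characteristic-foliation formulas; part~(2) is Poincar\'e--Hopf for either foliation. One slip worth correcting: the characteristic foliation is \emph{transverse} to the dividing set, not tangent to it; this transversality is precisely what lets you apply Poincar\'e--Hopf to $R_{\pm}$ (the vector field points strictly out of $R_+$ along $\Gamma$ and along the positively transverse boundary), so the intended conclusion $\chi(R_\pm)=e_\pm-h_\pm$ is correct even though the stated justification is not.
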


\begin{definition}
If $p$ is a hyperbolic point whose nearby regular leaves are a-arcs then we say that $p$ is of {\em type aa}. 
Similarly, we can define {\em types ab, bb, ac, bc}, and {\em cc} for hyperbolic points. 

An {\em aa-tile} $($resp. {\em ab-tile, bb-tile}$)$ is the closure of regular leaves near a type aa $($resp. ab, bb$)$ hyperbolic point and its topological type is a disc. 
An {\em ac-annulus} $($resp. {\em bc-annulus}$)$ 
is the closure of regular leaves near a type ac $($resp. bc$)$  hyperbolic point and its topological type is an annulus. 
A {\em cc-pants} is the closure of regular leaves near a type cc hyperbolic point and its topological type is a pair of pants. 
See Figure ~\ref{region}. 
\end{definition}
\begin{figure}[htbp]
\begin{center}
\SetLabels
(0.15*0.55)  aa-tile\\
(0.5*0.55)    ab-tile\\
(0.84*0.55)  bb-tile\\
(0.15*0.04)  ac-annulus\\
(0.5*0.04)    bc-annulus\\
(0.84*0.04)  cc-pants\\
\endSetLabels
\strut\AffixLabels{\includegraphics*[scale=0.5, width=90mm]{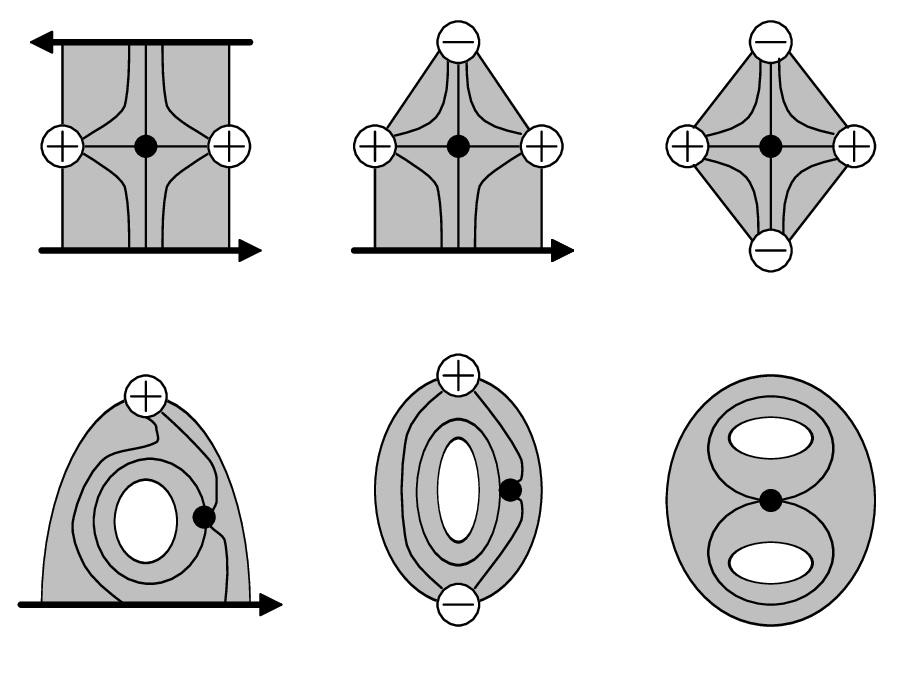}}
\caption{Neighborhood regions near hyperbolic points.}\label{region}
\end{center}
\end{figure}

\begin{proposition}[Region decomposition {\cite[p.441]{bm1} \cite[Proposition 2.15]{ik1-1}}]
\label{prop:region}

If $\F(F)$ contains a hyperbolic point then the surface $F$ 
can be decomposed into a union of aa-tiles, ab-tiles, bb-tiles, ac-annuli, bc-annuli and cc-pants. 
Such a decomposition is called a {\em region decomposition}. 
\end{proposition}

Suppose that $R$ is a component of the region decomposition of $F$.  
If $R$ is an aa-tile, ac-annulus, bc-annuls or cc-pants then  some parts of the boundary $\partial R$ are possibly identified in $F$.
In such a case we say that $R$ is {\em degenerate}, see Figure~\ref{fig:degeneratebc}.

Both the surface $F$ and the ambient manifold $M$ are oriented. 
Let $\vec n_F$ be a positive normal to $F$. 
We orient each leaf of $\F(F)$, for both regular and singular, so that if we stand up on the positive side of $F$ and walk along a leaf in the positive direction then the positive side of the corresponding page $S_t$ of the open book is on our left. 
In other words, at a non-singular point $p$ on a leaf $l \subset (S_t \cap F)$ let $\vec n_S$ be a positive normal to $S_t$ then $X_{ob}= \vec n_S \times \vec n_F$ is a positive tangent to $l$. 
As a consequence, positive/negative elliptic points become sources/sinks of the vector field $X_{ob}$.

To illustrate open book foliations it is convenient to use a {\em movie presentation}, see \cite[Section 2.1.5]{ik1-1}), that is a set of slices $(S_t \cap F, S_t)$. 
In a movie presentation, a hyperbolic singularity appears when the configuration of leaves changes. See the passage in Figure~\ref{fig:hyperbolic}.  As $t$ increases, two regular leaves $l_{1}$ and $l_{2}$ approach along a properly embedded arc $\gamma$ in $S_t \setminus (S_t \cap F)$ (the dashed arc) connecting $l_{1}$ and $l_{2}$. At a critical moment $l_{1}$ and $l_{2}$ form a singular leaf containing a hyperbolic point, and then the configuration changes. Up to isotopy the arc $\gamma$ is uniquely determined.
We call $\gamma$ a {\em describing arc} of the hyperbolic singularity. 
\begin{figure}[htbp]
\SetLabels
(.1*.78) $\gamma$\\
(.05*.86) $l_1$\\
(.18*.86)   $l_2$\\
(.46*.59) $\gamma$\\
(.36*.55) $l_1$\\
(.58*.55)   $l_2$\\
\endSetLabels
\strut\AffixLabels{\includegraphics*[width=100mm]{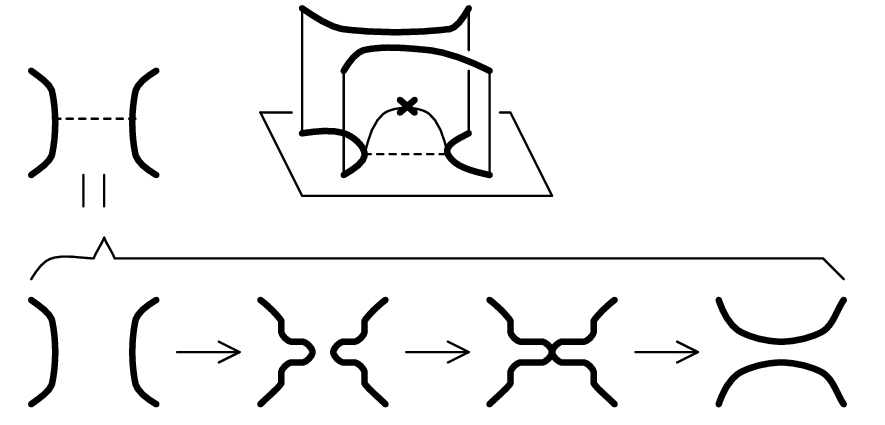}}
 \caption{A movie presentation of a hyperbolic singularity.}
 \label{fig:hyperbolic}
\end{figure}

\begin{remark}\label{remark sign describing arc}
The sign of a hyperbolic point can be understood in terms of a describing arc and orientations of nearby leaves: 
Recall that each leaf is both oriented and transversely oriented. 
Figure~\ref{fig:sign} shows that: 
{\em A hyperbolic point is positive (resp. negative) if and only if the positive normals $\vec n_F$ point out of (resp. into) the describing arc. }
This observation will be important in many places in this paper. See Figure~\ref{fig:describing-arc}. 
\begin{figure}[htbp]
\SetLabels
\endSetLabels
\strut\AffixLabels{\includegraphics*[width=90mm]{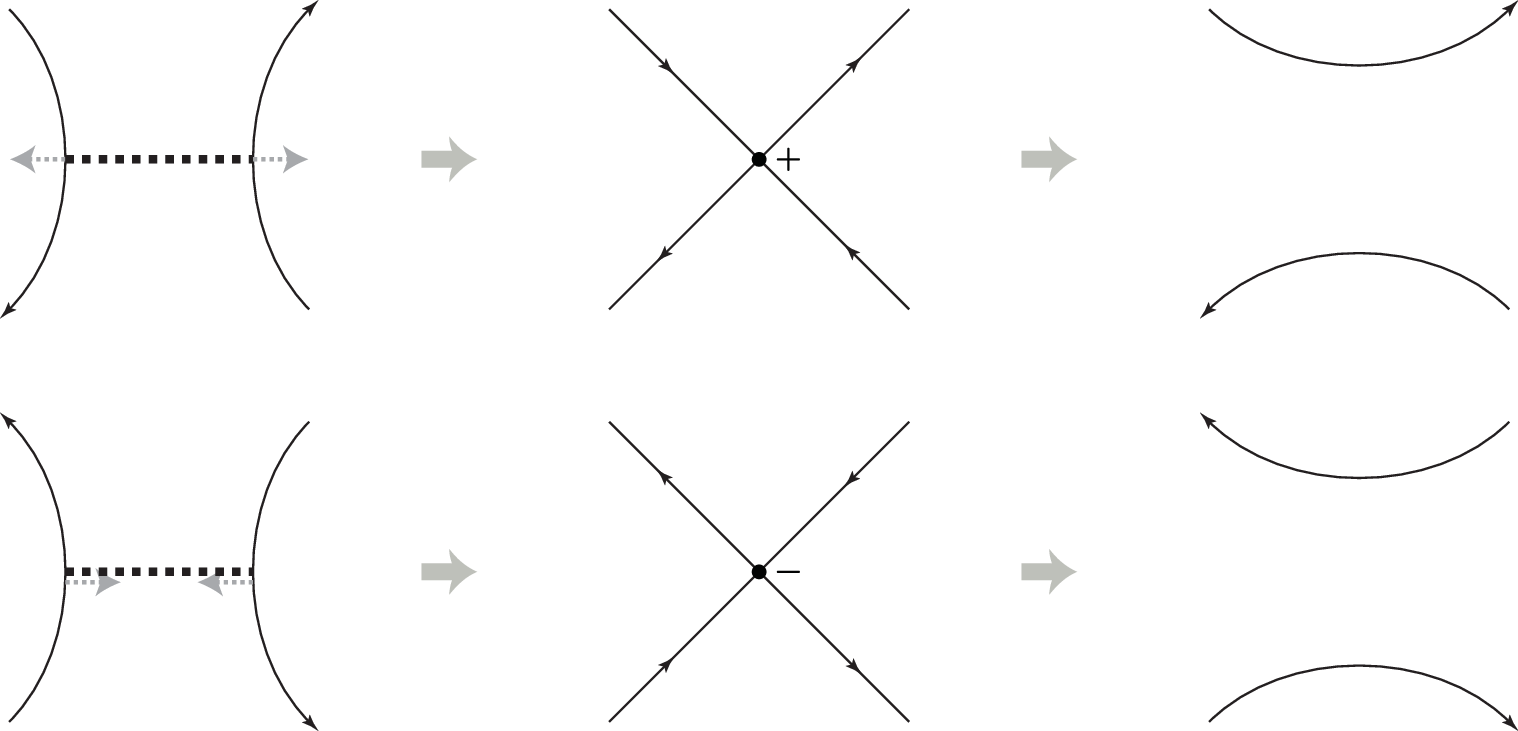}}
\caption{(Remark~\ref{remark sign describing arc}) 
Relation among the signs of hyperbolic points,  describing arcs (thick dashed) and normal vectors $\vec n_F$ (dashed gray arrows).}
\label{fig:describing-arc}
\end{figure}
\end{remark}

\begin{definition}\label{def:negativity-graph}
The {\em graph $G_{--}$} is a graph embedded in $F$. 
The edges of $G_{--}$ are the unstable separatrices for negative hyperbolic points in $aa$-, $ab$- and $bb$-tiles. 
See Figure~\ref{fig:neggraph}. 
We regard negative hyperbolic points as part of the edges. 
The vertices of $G_{--}$ are the negative elliptic points in $ab$- and $bb$-tiles and the end points of the edges of $G_{--}$ that lie on $\partial F$, called the {\em fake} vertices. 

By the same way we can define $G_{++}$ the graph consists of positive elliptic points and stable separatrices of positive hyperbolic points. 
\begin{figure}[htbp]
 \begin{center}
\SetLabels
(0.03*0.95) aa-tile\\
(0.3*0.95) ab-tile\\
(0.58*0.95) bb-tile\\
(0.90*0.1) \Large : $G_{--}$\\
(0.9*0.29) : Fake vertex\\
  \endSetLabels
\strut\AffixLabels{\includegraphics*[scale=0.5, width=120mm]{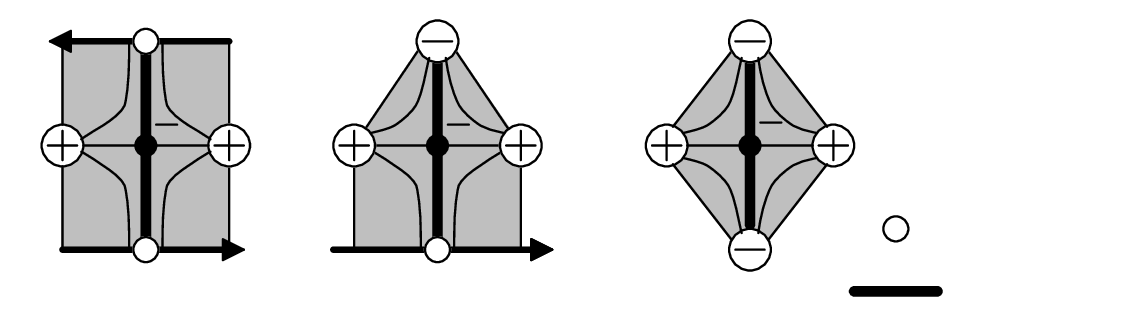}}
 \caption{The graph $G_{--}$.}\label{fig:neggraph}
  \end{center}
\end{figure}
\end{definition}

Finally, we recall the definition of transverse overtwisted discs. 

\begin{definition}\label{def:trans-ot-disc}
Let $D \subset M_{(S, \phi)}$ be an oriented disc whose boundary is a positively braided unknot.
If the following are satisfied $D$ is called a {\em transverse overtwisted disc}: 
\begin{enumerate}
\item $G_{--}$ (Definition~\ref{def:negativity-graph}) is a connected tree with no fake vertices.
\item $G_{++}$ is homeomorphic to $S^1$.
\item $\F(D)$ contains no c-circles. 
\end{enumerate}
\end{definition}
 
\begin{theorem}{\cite[Proposition 4.2 and Corollary 4.6]{ik1-1}}
The contact structure $\xi_{(S, \phi)}$ is overtwisted if and only if there exists a transverse overtwisted disc.
\end{theorem}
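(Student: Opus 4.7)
The plan is to prove the two directions separately, with the harder direction being the construction of a transverse overtwisted disc from a classical one.

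For the ``if'' direction (existence of a transverse overtwisted disc implies overtwistedness), I would start with a transverse overtwisted disc $D$ and invoke Theorem \ref{identity theorem}. Since $\F(D)$ contains no c-circles by Definition \ref{def:trans-ot-disc}(2), after a $C^{\infty}$-small perturbation the open book foliation $\F(D)$ is topologically conjugate to the characteristic foliation $\cF(D)$, so $D$ is a convex surface. The next step is to identify the dividing set $\Gamma_D$ from the combinatorics of $\F(D)$: a neighborhood of $G_{++}$ (resp.\ $G_{--}$) serves as the positive (resp.\ negative) region of $\cF(D)$, and $\Gamma_D$ lies between them. Since $G_{++}\cong S^1$ is homotopic to $\partial D$ while $G_{--}$ is a tree sitting in the disc bounded by $G_{++}$, the dividing set must contain a closed curve enclosing the tree and nothing else. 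This closed, contractible component of $\Gamma_D$ triggers Giroux's criterion for overtwistedness of convex surfaces, producing a genuine overtwisted disc and hence $\xi_{(S,\phi)}$ is overtwisted.

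For the ``only if'' direction, assume $(M,\xi_{(S,\phi)})$ is overtwisted and take a classical overtwisted disc $\Delta$. By Bennequin--Pavelescu one may transversely isotope a transverse push-off of $\partial\Delta$ (which is a transverse unknot) to a closed braid with respect to $(S,\phi)$; an Alexander-type argument and compatibility of $\xi_{(S,\phi)}$ with the open book let us replace $\Delta$ by a disc whose boundary is the positive unknot braid obtained in this way. By Theorem \ref{theorem:existence}, isotope $\Delta$ rel $\partial\Delta$ so that it admits an open book foliation with no c-circles. What remains is to modify $\F(\Delta)$ until $G_{++}\cong S^1$ and $G_{--}$ is a tree with no fake vertices. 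Here one uses the convex/characteristic foliation picture: for the classical overtwisted disc, the dividing set is a single closed curve, which dictates that the positive region deformation retracts onto an $S^1$ and the negative region onto a tree. By applying the equivalence of $\F(\Delta)$ and $\cF(\Delta)$ from Theorem \ref{identity theorem} in reverse, together with local foliation-change moves that eliminate fake vertices of $G_{--}$, absorb spurious positive hyperbolic branches into $G_{++}$, and cancel unnecessary elliptic--hyperbolic pairs, one arranges the three defining conditions of Definition \ref{def:trans-ot-disc} without affecting the transverse boundary type.

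The main obstacle is the second direction, specifically the cleanup step that forces $G_{++}$ into a single circle and $G_{--}$ into a tree while keeping $\partial \Delta$ a positive unknot braid. This requires a careful analysis of how local tile modifications (across $aa$-, $ab$-, $bb$-tiles) affect the graphs $G_{\pm\pm}$, and the argument must rule out that additional closed components of $G_{++}$ or cycles in $G_{--}$ reappear after the moves. The first direction is essentially bookkeeping with Theorem \ref{identity theorem} and Giroux's criterion, and I would expect it to occupy a short subsection, whereas the second direction is the substance of the proof.
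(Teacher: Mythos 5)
The paper itself offers no proof here --- the statement is quoted verbatim from \cite[Proposition 6.2 and Corollary 6.5]{ik} --- so your attempt must be measured against that source. For the ``if'' direction your convex-surface route (structural stability, identify the dividing set as the boundary of a neighbourhood of $G_{--}$, invoke Giroux's criterion) is a legitimate alternative, but it is heavier than what is actually needed and has a technical wrinkle: $\partial D$ is transverse, not Legendrian, so Giroux's criterion does not apply verbatim and the identification of the dividing set from $\F(D)$ requires an argument. The argument in \cite{ik} is purely numerical: since $G_{--}$ is a tree with no fake vertices one has $h_-=e_--1$, and since $G_{++}\cong S^1$ one has $h_+=e_+$, so Proposition~\ref{sl-formula-1} gives $sl(\partial D,[D])=-(e_+-e_-)+(h_+-h_-)=1>-1=-\chi(D)$, violating the Bennequin--Eliashberg inequality; hence $\xi_{(S,\phi)}$ is overtwisted. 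This avoids convex surface theory entirely.

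The ``only if'' direction of your proposal has a genuine gap, and it sits exactly where the content of the theorem lies. First, Theorem~\ref{identity theorem} cannot be ``applied in reverse'': it says that a \emph{given} embedded surface, after a $C^\infty$-small perturbation, has conjugate open book and characteristic foliations; it does not let you impose the characteristic foliation of the standard overtwisted disc on the new disc you obtain after braiding $\partial\Delta$ and putting it in general position, whose dividing set you no longer control. Second, the ``local foliation-change moves that eliminate fake vertices, absorb spurious positive hyperbolic branches, and cancel elliptic--hyperbolic pairs'' are precisely what must be constructed, shown to terminate, and shown to preserve the transverse boundary; asserting their existence is not a proof. The actual route in \cite{ik} starts from the observation that the positive transverse push-off of the boundary of an overtwisted disc is a transverse unknot with $sl=1$ bounding a disc $D$; once $D$ admits a c-circle-free open book foliation (Theorem~\ref{theorem:existence}), Proposition~\ref{sl-formula-1} applied to $sl(\partial D)=1$ and $\chi(D)=1$ forces $h_+=e_+$ and $h_-=e_--1$, and the remaining (substantial) combinatorial work is to upgrade these counts to the structural conditions on $G_{++}$ and $G_{--}$ in Definition~\ref{def:trans-ot-disc}. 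Your write-up records neither the $sl=1$ normalization nor this Euler-characteristic bookkeeping, so even the starting point of that combinatorial analysis is missing.
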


Definition~\ref{def:trans-ot-disc} implies that the region decomposition of a transverse overtwisted disc consists of ab- and bb-tiles with $\sgn({\textrm{ab-tile}})=+1$ and $\sgn({\textrm{bb-tile}})=-1$. 
Hence $G_{++}$ lives in the ab-tiles and $G_{--}$ lives in the bb-tiles.

\section{Essential open book foliation }\label{sec: essential ob foliation} 

In this section we introduce {\em essential} open book foliations.
As before $L$ is a closed (possibly empty) braid in the open book $(S, \phi)$ and $F$ is either a closed surface in $M-L$ or a Seifert surface of $L$.

\begin{definition}\label{def of essential}
\begin{enumerate}
\item
A $b$-arc $l$ in a page $S_{t}$ is called {\em essential} (resp. {\em strongly essential}) if $l$ is an essential arc in $S_{t}\setminus(S_{t} \cap L)$ (resp. in $S_t$). 
\item
A $c$-circle $l$ in $S_{t}$ is called {\em essential} if $l$ is an essential simple closed curve in $S_{t}\setminus(S_{t} \cap L)$. 
\item
An open book foliation $\F(F)$ is called {\em essential} if all the $b$-arcs are essential ($c$-circles need not be essential).
\item
An elliptic point $v\in\F(F)$ is called {\em essential} (resp. {\em  strongly essential}) if every $b$-arc that ends at $v$ is essential (resp. strongly essential).
\end{enumerate}
\end{definition}

Our first result is an improvement of Theorem \ref{theorem:existence}.

\begin{theorem}[Non-sphere case] 
\label{theorem:weak}
Suppose that $F$ is incompressible and not a sphere. 
There exists a surface $F'$ admitting an essential open book foliation and essential spheres $\mathcal S_1, \dots, \mathcal S_k$ $(k \geq 0)$ such that
$$F = F' \# \mathcal S_1 \# \dots \# \mathcal S_k$$
up to isotopy that fixes $L$. Moreover if $F$ does not intersect a binding component $C$ then nor does $F'$. 
\end{theorem}

\begin{theorem}[Sphere case]
\label{theorem:weak-sphere}
If $F$ is an essential sphere then there exists an essential sphere $F'$ that admits an essential open book foliation. 
Moreover, if $F$ does not intersect a binding component $C$ then nor does $F'$.
\end{theorem}

\begin{proof}[Proof of Theorems \ref{theorem:weak} and \ref{theorem:weak-sphere}]

We put the surface $F$ in general position so that it admits a singular foliation $\mF$ satisfying the properties {\bf ($\mathcal{F}$ i)}, {\bf ($\mathcal{F}$ ii)}, {\bf ($\mathcal{F}$ iii)} in Definition~\ref{def of OBF} and\\

\noindent
{\bf($\mathcal{F}$ iv$'$)}: The type of each tangency in {\bf ($\mathcal{F}$ iii)} is either a saddle or a local extremum.\\

\noindent
Note that {\bf ($\mathcal{F}$  iv$'$)} is weaker than {\bf ($\mathcal{F}$  iv)}. 

First we remove all the inessential b-arcs (cf. \cite[Lemma 1.2]{bf}): 
Let $l$ be an innermost inessential b-arc of $\mF$ in a regular page $S_{t}$ that cobounds a disc $\Delta \subset S_{t}$ with the binding, and $\Delta \cap L$ is empty. 
Since $l$ is innermost $\Delta$ contains no $b$-arcs.
If $\Delta$ contains $c$-circles $c_{1},\ldots,c_{k}$, 
let $A_{i} \subset F$ be a small annular neighborhood of $c_{i}$ with no singularities. We push each annulus $A_{i}$ out of $\Delta$ across the binding $B$ as shown in Figure~\ref{fig:removeb1}. 
This does not create new b-arcs but new local extrema appear. 
\begin{figure}[htbp]
 \begin{center}
\SetLabels
(0.25*0.45) $\Delta$\\
(0*.5) $B$\\
(0.34*0.18) $S_{t}$\\
(0.28*0.68) $l$\\
(.19*.4) $c_i$\\
(.11*.35) $A_i$\\
  \endSetLabels
\strut\AffixLabels{\includegraphics*[scale=0.5, width=80mm]{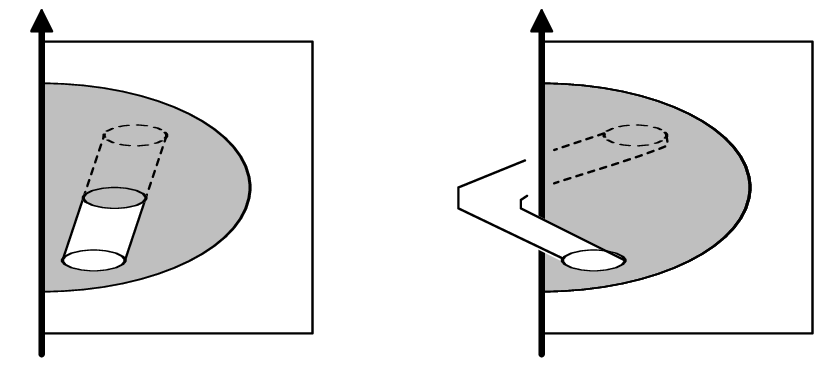}}
 \caption{Removing a c-circle in $\Delta$.}
 \label{fig:removeb1}
  \end{center}
\end{figure}

Now $l$ is boundary parallel in $S_{t} \setminus (S_{t} \cap F)$. 
We push the surface $F$ along $\Delta$ as shown in Figure~\ref{fig:removeb}.
\begin{figure}[htbp]
\begin{center}
\SetLabels
(0.25*0.58) $\Delta$\\ 
(.25*.68) $l$\\
\endSetLabels
\strut\AffixLabels{\includegraphics*[scale=0.5, width=80mm]{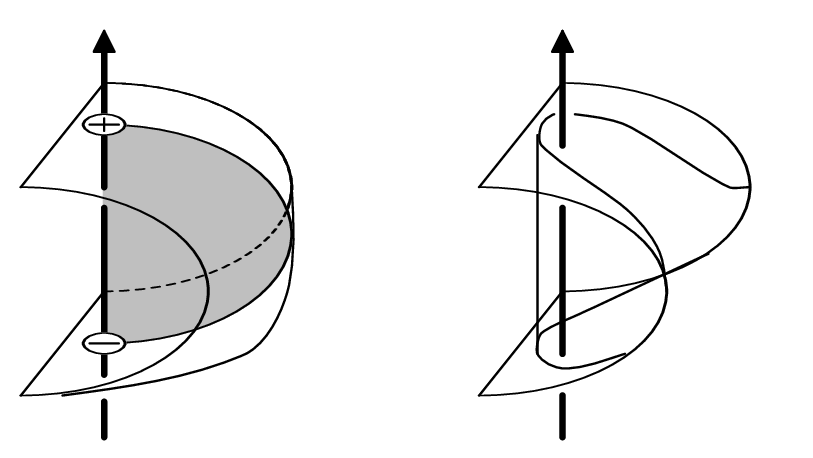}}
 \caption{Removing an inessential b-arc.}
 \label{fig:removeb}
  \end{center}
\end{figure}
As a consequence two elliptic points that are the ends of $l$ disappear and new hyperbolic points and local extremal points appear. 
Since the number of elliptic points in the original $\mF$ is finite, applying the operation finitely many times $\mF$ has no inessential b-arcs.

The next step is to remove local extremal points.
The idea and method are similar to Roussarie-Thurston's general position argument \cite{r,t} for surfaces in taut foliation.

Let $p$ be a local minimum. 
(Parallel arguments hold for local maxima.)  
Suppose that $p$ is contained in the page $S_{t_0}$. 
The singular foliation $\mF$ near $p$ consists of a family of concentric c-circles each of which bounds a disc $X_t \subset S_t$ for $t_0 < t < t_0 +\e$. 
Let $q_1$ denote the first saddle point the c-circles $\partial X_t$ encounter as $t$ ($\geq t_0+\e$)  increases.   
The saddle point $q_1$ can be formed in one of the following three ways, see also Figure \ref{fig:saddletypes0}:
\begin{itemize}
\item[(Type I)] 
The describing arc is contained in $S_t \setminus X_t$ and joining a point on $\partial X_t$ and a point on another leaf of $\mF$. 
\item[(Type II)] 
The describing arc is contained in $S_t \setminus X_t$ and joining two points of $\partial X_t$. 
\item[(Type III)] 
The describing arc is contained in $X_{t}$ and joining two points of $\partial X_t$.
\end{itemize}
\begin{figure}[htbp]
\begin{center}
\SetLabels
(0.15*0.95) Type I\\
(.5*0.95) Type II\\
(.85*0.95) Type III\\
(.5*.04) region $X_{t_i -\e}$\\
(.5*.78) $X_{t_i + \e}$\\
(.55*.6) $q_i$\\
\endSetLabels
\strut\AffixLabels{\includegraphics*[width=130mm]{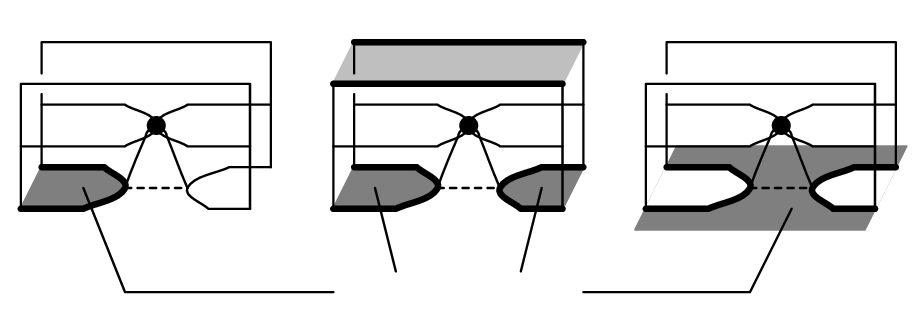}}
\caption{Three types of saddles. Dashed arcs are describing arcs.} \label{fig:saddletypes0}
\end{center}
\end{figure}
We say that a local minimum $p$ is of \emph{Type I, II, III} if the saddle $q_1$ is of Type I, II, III, respectively. See Figure \ref{fig:saddletypes}.
\begin{figure}[htbp]
 \begin{center}
\SetLabels
(0.15*-.07) Type I\\
(.5*-.07) Type II\\
(.85*-.07) Type III\\
(.85*.45) $D$\\
(.17*.52) $q_1$\\
(.37*.55) $q_1$\\
(.85*.71) $q_1$\\
(.12*.2) $p$\\
(.53*.1) $p$\\
(.85*.2) $p$\\
\endSetLabels
\strut\AffixLabels{\includegraphics*[width=100mm]{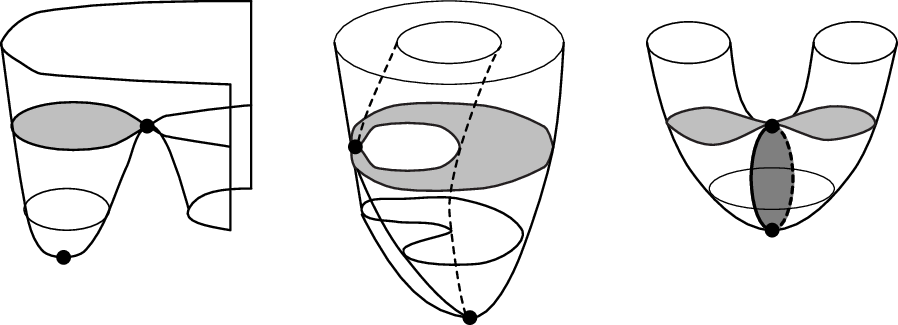}}
\caption{Three types of local minima $p$.}
 \label{fig:saddletypes}
\end{center}
\end{figure}

Suppose that $q_1$ is sitting on the page $S_{t_1}$.

Assume that the first saddle $q_1$ is of Type II. 
Inductively, we can define
a family of connected regions $\{ X_t \ | \ t_0 \leq t \leq t_n \}$ for some $n = n(p) \in \mathbb N$ and $t_0< t_1 < \dots < t_n$ (with the cyclic order of $S^1\cong \R/\Z$), such that:
\begin{itemize}
\item
$X_{t_0} = \{p\}$. 
\item 
For $i=1,\dots,n-1$ the region $X_{t_i}$ contains a Type II saddle $q_i \in S_{t_i}$ and the region $X_{t_i+\e}$ is obtained from $X_{t_i - \e}$ by attaching a $1$-handle corresponding to the describing arc as shown in the middle sketch of Figure~\ref{fig:saddletypes0}.
\item
For $t \in (t_i, t_{i+1})$ the region $X_t$ is isotopic to $X_{t_i + \e}$ and bounded by c-circles. 
\item
For $i=n$ the limit region $X_{t_n} := \lim_{t\to t_n} X_t$ contains a saddle $q_n \in S_{t_n}$ of either Type I or Type III. (For a saddle $q_i$, Types I, II, III can be define in the same way as $q_1$, according to the relation between the describing arc of $q_i$ and the region $X_t$.)
\item
If $q_1$ is of Type I or Type III then $n=n(p)=1$ and the family $\{ X_t \ | \ t_0 < t < t_{n=1} \}$ consists of discs.  
\end{itemize}
The first property shows that there is an inclusion map $\iota: X_{t_i - \e} \hookrightarrow X_{t_i+\e}$.
Using $\iota$ we may regard $X_{t_i - \e} \subset X_{t_i+\e}$.   
Also by the second property we can identify $X_{t_i+\e}$ with $X_{t_{i+1}-\e}$. 
Thus we may view $\{X_t\}$ as an increasing family of regions; 
\begin{equation}\label{inclusion}
X_{t_1-\e} \subset X_{t_2-\e} \subset\dots\subset X_{t_n-\e}. 
\end{equation}
For $t \in (t_0,t_n)$ let $$F_{[t_0,t]} := \bigcup_{t_0 \leq s \leq t } (\partial X_s)$$ be a subsurface of $F$.

\begin{remark}\label{rem:isotopic}
If the family of regions $\{X_s \ | \ t_0 \leq s \leq t\}$ contains no local minima then $F$ is ambient isotopic to $(F \setminus F_{[t_0, t]}) \cup X_t$.  
\end{remark}

We say that a local minimum $p$ is {\em innermost} if the family of regions $\{ X_t \ | \ t_0 < t < t_n \}$ contains no local minima, see Figure~\ref{fig:not-innermost}.
\begin{figure}[htbp]
\begin{center}
\SetLabels
(.22*0) $p$\\
(.73*.65) $q_1$\\
(.03*.2) $X_t$\\
\endSetLabels
\strut\AffixLabels{\includegraphics*[width=30mm]{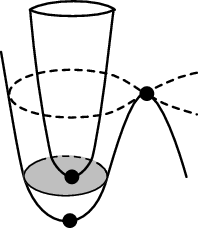}}
\caption{The local minimum $p$ is not innermost.}
\label{fig:not-innermost}
\end{center}
\end{figure}

Let
\begin{eqnarray*}
m(F) &=& \mbox{the number of local minima of } F,\\
m_{\rm II}(F) &=& \mbox{the number of Type II local minima of } F,\\
h(F) &=& \mbox{the number of saddle tangencies of } F \mbox{ with the pages.}\\
\end{eqnarray*}
Let $\{S_{\tau_i}\}_{i=0,\dots,k}$ ($0<\tau_0<\tau_1<\dots<\tau_k <1$) be the set of singular pages for $F$, namely the pages that are tangent to $F$ in a single point. 
(Recall that each page contains at most one singularity.) 
Choose $s_0,\dots,s_k$ such that $0<s_0 < \tau_0$ and $\tau_{i-1}<s_i < \tau_i$. 
We define a {\em width} of the surface $F$ by 
$$
w(F)= \sum_{i=0}^k \#(S_{s_i}\cap F)
$$
where $\#(S_{s_i}\cap F)$ denotes the number of components of $S_{s_i}\cap F$ (i.e., the number of the leaves in the page $S_{s_i}$).

Let $p \in S_{\tau_i}$ be a local minimum and $\gamma \subset S_{\tau_i}$ be a path connecting $p$ and a point on the boundary $\partial S_{\tau_i}$. 
Let $\mathsf i (\gamma, F \cap S_{\tau_i})$ be the geometric intersection number of the path $\gamma$ and the leaves $F \cap S_{\tau_i}$.
We define the \emph{complexity} of the local minimum $p$ by 
\[ c(p) = \min \{ \mathsf{i}(\gamma, F \cap S_{\tau_i})\: | \: \gamma \text{ is a path  connecting } p \text{ and a point on } \partial S_{\tau_i} \} \]

We consider the complexity of $F$ measured by a four-tuple $$
(m(F), h(F), m_{\rm II}(F), w(F)),
$$
ordered by the lexicographic order.  
We prove that, by isotopy and desumming spheres, we can reduce the complexity until we remove all the local minima of $F$.  
Let $p$ be a local minimum realizing the maximal $c(p)$ among all the local minima of $F$. 
We have four cases to consider. 

\noindent
{\bf Case I}: $p$ is not innermost.

Suppose that $p$ lies in the page $S_{t_0}$. Let $\{X_t \ | \ t_0 \leq t \leq t_n\}$ be the family of regions emerging from $p$ as defined above. 
Let $p'$ be a local minimum contained in the region $\{X_t \ | \ t_0 < t < t_n\}$.
We may assume that $p' \in S_{t_*}$ for $t_* \in (t_0, t_n)$ and there are no local minima in the region $\{X_t \ | \  t_0 < t< t_*\}$.

By Remark~\ref{rem:isotopic} we can compress (rescale) the subsurface $F_{[t_0,t_*]}$ into a small interval $[t_*-\e, t_*]$ to get a surface, $F'$. 
Let $p''$ denote the local minimum of $F'$ corresponding to the original local minimum $p \in F$.  
See Figure~\ref{fig:compress-1}. 
\begin{figure}[htbp]
\begin{center}
\SetLabels
(0.16*0.3) $F_*$\\
(.1*.9) $F$\\
(0.65*.9) $F'$\\
(0.5*0.6) {\small compress}\\
(-.03*0.05) $t_{0}$\\
(-.03*0.67) $t^{*}$\\
(1.05*0.42) $t^{*}\!-\!\varepsilon$\\
(1.03*0.67) $t^{*}$\\
(0.2*0.63) $X_{t^{*}}$\\
(0.3*0.05) $p$\\
(0.3*0.68) $p'$\\
(0.75*0.33) $p''$\\
(0.72*0.68) $p'$\\
\endSetLabels
\strut
\AffixLabels{\includegraphics*[width=100mm]{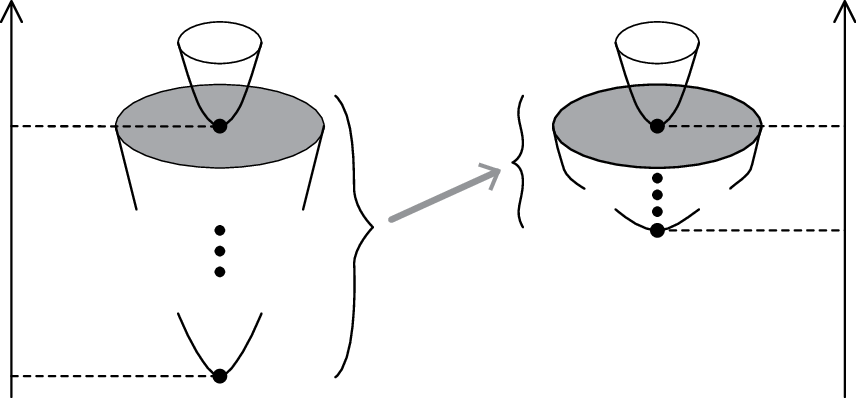}}
\caption{Compressing the subsurface $F_{[t_0, t_*]}$.}
\label{fig:compress-1}
\end{center}
\end{figure}

Note that $m(F)=m(F')$, $h(F)=h(F')$ and $m_{\rm II}(F)=m_{\rm II}(F')$ and $w(F)\leq w(F')$. 

We claim that $w(F') < w(F)$.

Assume that $w(F')=w(F)$. 
This means $0<t_* - t_0 <1$ and in the interval $(t_0, t_*)$ all the tangencies (if exist) are exactly the Type II tangencies $q_1,\dots,q_m$ ($m \leq n-1$) associated to $p$. 
Thus we have $c(p)=c(p'').$
Since a component of $\partial X_{t_*}$ must contribute to $c(p')$ we have $$c(p)=c(p'')=c(p')-1.$$
This contradicts the maximality assumption of $c(p)$. 

\noindent
{\bf Case II}: $p$ is innermost and of Type I.

In this case $p$ and the Type I saddle $q_1$ can be removed by flattening the bump. 
See Figure~\ref{fig:bump}. 
This can be done without affecting other part of $F$.  
As a consequence, we obtain a surface $F'$ with $m(F')<m(F)$.
\begin{figure}[htbp]
\begin{center}
\SetLabels
(.2*.8) $F$\\
(.8*.8) $F'$\\
(.1*-.06) $p$\\
(.22*.4) $q_1$\\
\endSetLabels
\strut\AffixLabels{\includegraphics*[width=80mm]{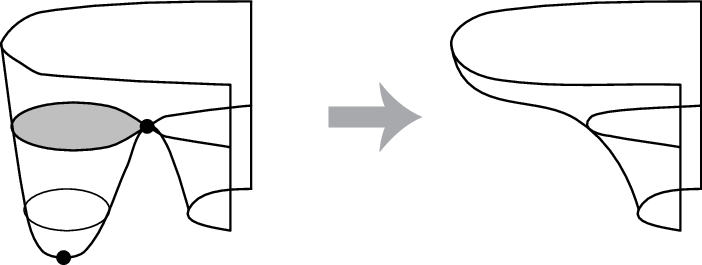}}
\caption{(Case II) Flattening a bump.}
\label{fig:bump}
\end{center}
\end{figure}

\noindent
{\bf Case III}: $p$ is innermost and of Type III.

Let $D$ (see Figure~\ref{fig:saddletypes}) be a disc bounded by the two trajectories from the Type III saddle $q_1$ to $p$ and intersecting each $\partial X_t$ ($t_0 < t < t_1$) transversely. 
The set $({\rm{Int}} D) \cap F$ is empty because of the innermost assumption. 

We cut the surface $F$ along the circle $\partial D$.  
Since $F$ is incompressible $\partial D$ bounds a disc $D' \subset F$ and we obtain a sphere $\mathcal{S} = D \cup D'$. 
Thus, the surface $F$ is the connected sum of a surface $F':= (F\setminus D') \cup D$ and $\mathcal S$, 
$$F = F' \# \mathcal S.$$
See Figure~\ref{fig:saddleremoveI-III}.  
If $\mathcal S$ bounds a ball then $F$ and $F'$ are isotopic. Otherwise $F$ and $F'$ are just homeomorphic. 
If $F$ is an essential sphere then both $F'$ and $\mathcal S$ are spheres. Since $F$ is essential at least one of them must be essential. 
We may assume that $F'$ is essential. 
\begin{figure}[htbp]
\begin{center}
\SetLabels
(.28*.6) $F$\\
(.7*.75)  $\mathcal{S}$\\
(.95*0.6)  $F'$\\
\endSetLabels
\strut\AffixLabels{\includegraphics*[width=90mm]{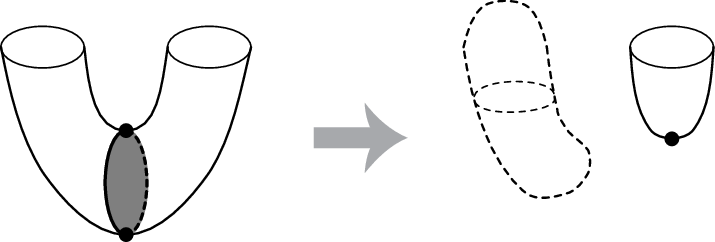}}
\caption{(Case III) Treatment of an innermost Type III local minimum.}
\label{fig:saddleremoveI-III}
\end{center}
\end{figure}

With a small perturbation we can put $F'$ so that its foliation $\mathcal F'$ satisfies ($\mathcal F$ i)-($\mathcal F$ iii) and ($\mathcal F$ iv$'$) and all the b-arcs are kept essential. 
We have $m(F')\leq m(F)$ and $h(F')<h(F)$.

\noindent
{\bf Case IV}: $p$ is innermost and of Type II.

Recall the saddles $q_1,\dots, q_n$ associated to $p$ such that $q_1,\dots, q_{n-1}$ are of Type II and $q_n$ is of Type I or III. 

\noindent
{\bf Case IV-a}: 
Assume that $q_n$ is of Type I.

Let $\iota: X_{t_{n-1} - \e} \hookrightarrow X_{t_n - \e}$ be the inclusion mentioned in (\ref{inclusion}). 
By the definitions of Type I and Type II saddles, 
a describing arc $\delta_n$ of the Type I saddle $q_n$ lies on $S_{t_n-\e}\setminus X_{t_n-\e}$, whereas a describing arc, $\delta_{n-1}$, of the Type II saddle $q_{n-1}$ lies on $S_{t_{n-1}-\e}\setminus X_{t_{n-1}-\e}$, i.e., $\iota(\delta_{n-1}) \subset X_{t_n - \e}$. See the left column of Figure~\ref{fig:typeII}.
This shows that by ambient isotopy we can put the saddles $q_{n-1}$ and $q_n$ in the same page, say $S_{t_n}$. 
With further isotopy we may ``move up (in the $t$-direction)'' the saddle $q_{n-1}$ to the level $t=t_n+\e$, see the right column of Figure~\ref{fig:typeII}. 

\begin{figure}[htbp]
\begin{center}
\SetLabels
(-.1*.92) {\small Type II}\\
(.15*.91) $\delta_{n-1}$\\
(-.1*.42) {\small Type I}\\
(.15*.42) $\delta_n$\\
(.9*.42) $\delta_n$\\
(.43*.9) \small $t_{n-1} -\e$\\
(.45*.4) \small $t_n - \e$\\
(.45*.24) \small $t_n+\e$\\
(.3*.94) $X_t$\\
(.48*.05) \large $t$\\
(1.1*.42) {\small Type I}\\
(.15*1.01) {\tiny some other leaf}\\
\endSetLabels
\strut\AffixLabels{\includegraphics*[width=75mm]{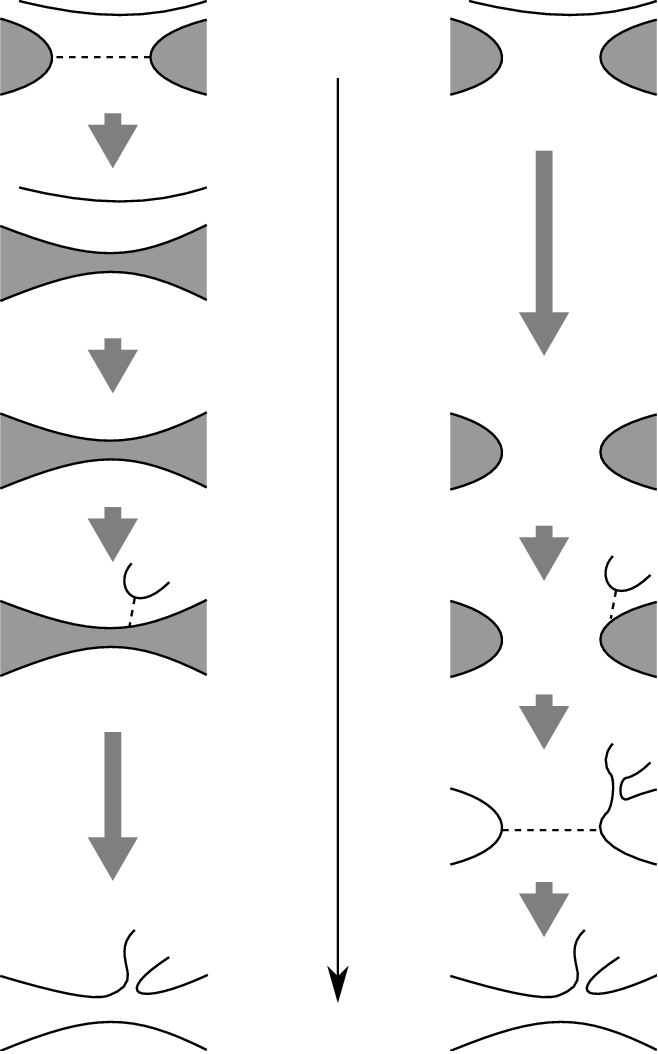}}
\caption{(Case IV-a) Movie presentations of $F$ (left) and $F'$ (right). The order of consecutive Type II and Type I saddles is exchangeable. (The shaded regions are $X_t$.)}
\label{fig:typeII}
\end{center}
\end{figure}

Repeating this for each of $q_{n-1}, q_{n-2},\dots, q_1$, we may assume that $q_1$ is of Type I, hence so is $p$. 
This operation preserves the number of local minima and the number of saddles. 
Thus the resulting surface, $F'$, satisfies 
$m(F')=m(F)$, $h(F')=h(F)$ and $m_{\rm II}(F') <m_{\rm II}(F)$.

\noindent
{\bf Case IV-b}: 
Assume next that $q_n$ is of Type III. 

We first note that, unlike Case IV-a, the order of subsequent Type II and Type III saddles may not be changeable.  
Such an example is given in the column (a) of Figure~\ref{fig:exchangeIII}, where the describing arcs for the Type II and Type III saddles, $q_3$ and $q_4$, are parallel.  
\begin{figure}[htbp]
\begin{center}
\SetLabels
(0*.95) (a)\\
(.6*.75) (b)\\
(-.1*.3) $t=t_4-\e$\\
(-.1*.27) ($4=n$)\\
(-.1*.1) $t=t_4+\e$\\
(-.1*.6) $t=t_3-\e$\\
(1.08*.65) $t=t_4-3\e$\\
(1.1*.1) $t=t_4+\e$\\
(.14*.92) $\delta_1$\\
(.14*.8) $\delta_2$\\
(.5*.6) $\delta_3$\\ 
(.25*.35) $\delta_4$\\
(.94*.65) $\delta'_4$\\
(.62*.34) $\delta'_1$\\
(.73*.3) $\delta'_2$\\
(.62*.23) $\delta'_3$\\
(1.1*.35) $t=t_4-2\e$\\
(1.1*.32) $t=t_4-\e$\\
(1.1*.29) $t= t_4$\\
\endSetLabels
\strut\AffixLabels{\includegraphics*[width=90mm]{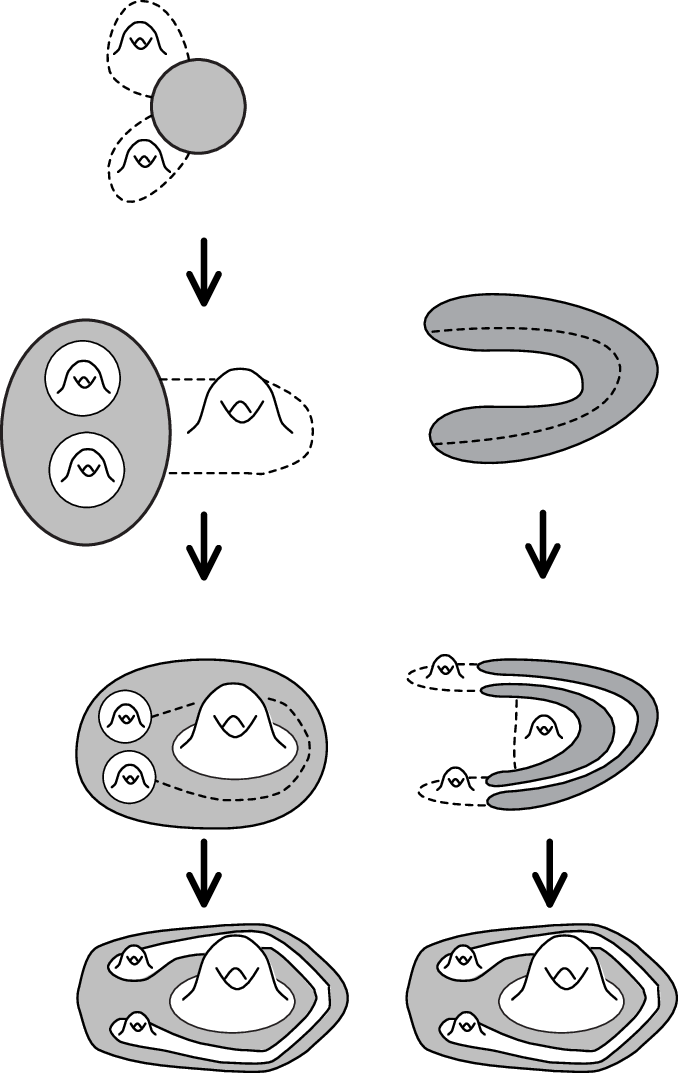}}
\caption{
(Case IV-b) Movie presentations of (a) $F$ and (b) $F'$, where $n=4$.}
\label{fig:exchangeIII}
\end{center}
\end{figure}

To deal with such examples we retake a new set of Type II saddles $q'_1,\dots,q'_{n-1}$ that can be exchangeable with the Type III saddle $q_n$. 

By the definition of $\{X_t \ | \ t_0 \leq t \leq t_n\}$, the region $X_{t_n - \e} = X_{t_{n-1}+\e}$ can be written as 
\[
X_{t_n-\e} = D \cup H_1 \cup \cdots \cup H_{n-1} 
\]
where $D$ is a disc and $H_1,\dots,H_{n-1}$ are $1$-handles corresponding to the Type II saddles $q_1,\dots,q_{n-1}$. (See the left sketch of Figure~\ref{fig:handle-decomp}.)

We consider another handle decomposition of $X_{t_n-\e}$. 
Let $\nu(\delta_n) \subset S_{t_n-\e}$ be a neighborhood of a describing arc, $\delta_n$, of the Type III saddle $q_n$. 
There exist $1$-handles $H'_1,\dots,H'_{n-1}\subset S_{t_n-\e}$ such that 
$$
X_{t_n-\e} = \nu(\delta_n) \cup H'_1\cup\dots\cup H'_{n-1}.
$$
Let $\delta'_1,\dots,\delta'_{n-1}$ be cores of the $1$-handles $H'_1,\dots,H'_{n-1}$. 
We observe that:  
\begin{equation}\label{observation-handle}
\mbox{The arcs } \delta'_1,\dots,\delta'_{n-1} \mbox { and } \delta_n \mbox{ are mutually disjoint. }
\end{equation}
See the right sketch of Figure~\ref{fig:handle-decomp}.
\begin{figure}[htbp]
\begin{center}
\SetLabels
(.12*.4) $D$\\
(.18*.56) $\delta_4$\\
(0*.9) $H_1$\\
(.05*.8) $\delta_1$\\
(0*0) $H_2$\\
(.05*.1) $\delta_2$\\
(.4*.5) $H_3$\\
(.3*.6) $\delta_3$\\
(1*.6) $\nu(\delta_4)$\\
(.9*.57) $\delta_4$\\
(.62*.8) $H'_1$\\
(.62*.2) $H'_3$\\
(.77*.5) $H'_2$\\
(.7*.85) $\delta'_1$\\
(.67*.1) $\delta'_3$\\
(.83*.45) $\delta'_2$\\
\endSetLabels
\strut\AffixLabels{\includegraphics*[width=120mm]{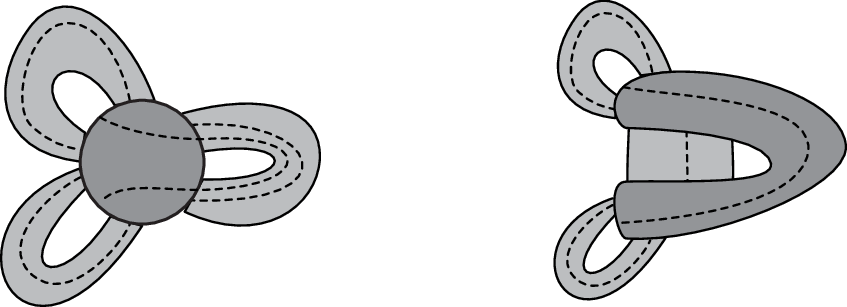}}
\caption{
Two handle decompositions of $X_{t_n -\e}$ where $n=4$.}
\label{fig:handle-decomp}
\end{center}
\end{figure}

With this new handle decomposition we construct a surface $F''$ isotopic to $F$. 
We may take $\e>0$ small enough to make sure that there are no tangencies in the interval $[t_n - n \e, t_n - \e]$ in the rest of the surface $F \setminus F_{[t_0,t_n -\e]}$.

Let $F''$ be a surface starting with a local minimum, $p'' \in S_{t_n - n\e}$, and adding Type II $1$-handles to an upward disc arising from $p''$ prescribed by
\begin{itemize}
\item
the describing arc $\delta'_1$ copied in $S_{t_n - n\e}$,  
\item[] $\qquad \qquad\qquad \vdots$
\item the describing arc $\delta'_{n-2}$ copied in $S_{t_n - 3\e}$,  
\item
the describing arc $\delta'_{n-1}$ copied in $S_{t_n - 2\e}$,  
\end{itemize}
then taking the union with $F\setminus F_{[t_0,t_n-\e]}$. 
By (\ref{inclusion}) the both surfaces $F$ and $F''$ are isotopic to $(F\setminus F_{[t_0, t_n-\e]}) \cup X_{t_n-\e}$.

The observation (\ref{observation-handle}) guarantees that reordering the Type III saddle $q_n$ and the Type II saddles $q'_1,\dots,q'_{n-1}$ yields a surface $F'$ that is isotopic to $F''$. 
In particular, the Type II local minimum $p''$ of $F''$ becomes a Type III local minimum in the surface $F'$. 
Hence we have $m(F')=m(F)$, $h(F')=h(F)$ and $m_{\rm II}(F') <  m_{\rm II}(F)$.

For all the cases (Cases I, \dots, IV) there exists a surface $F'$ with 
$$(m(F'), h(F'), m_{\rm II}(F'), w(F')) < (m(F), h(F), m_{\rm II}(F), w(F))$$ if $m(F)\geq 1$. 
Eventually we can find a surface isotopic to $F$ and with no local minima.

Finally, we note that in the above argument we did not create any new intersections (elliptic points) of $F$ and the binding. This shows that if the original surface $F$ does not intersect a binding component, $C$, then the surface $F'$ admitting an essential open book foliation does not intersect $C$ either.
\end{proof}


\section{Fractional Dehn twist coefficients}
\label{sec:FDTC}

In this section we review the notion of right-veering diffeomorphisms and the fractional Dehn twist coefficient (FDTC) defined by Honda, Kazez and Mati\'c \cite{hkm1} and study its basic properties.
Then we show that the FDTC is effectively computable and give an alternative description that does not require the Nielsen-Thurston classification. 

\subsection{Definitions of $c(\phi, C)$ and $c(\phi, L, C)$} 

\begin{definition}\cite{hkm1}
Let $C$ be a boundary component of $S$, and let $\gamma, \gamma'$ be isotopy classes (rel. to the endpoints) of oriented properly embedded arcs in $S$ which start at the same base point $* \in C \subset \partial S$. 
We say that $\gamma'$ lies {\em strictly on the right side} of $\gamma$ if there exist curves representing $\gamma$ and $\gamma'$ realizing the minimal geometric intersection number, and $\gamma'$ strictly lies on the right side of $\gamma$ near $*$. In such case, we denote $\gamma > \gamma'$. 
If $\gamma>\gamma'$ or $\gamma=\gamma'$ we denote $\gamma\geq\gamma'$. 
 
\end{definition}

\begin{definition}\cite[Definition 2.1]{hkm1}
Let $C$ be a boundary component of $S$.
Let $\Aut(S,C)$ denote the group of isotopy classes of diffeomorphisms of $S$ fixing $C$ point-wise.
We say that $\phi \in \Aut(S,C)$ is {\it right-veering} (resp. {\em strictly right-veering}) with respect to $C$ if $\gamma \geq \phi(\gamma)$ (resp. $\gamma> \phi(\gamma)$) for any isotopy classes $\gamma$ of essential arcs in $S$ starting at $C$. 
For $\phi \in \Aut(S,\partial S)$,
we say that $\phi$ is {\em $($strictly$)$ right-veering} if $\phi$ is (strictly) right-veering with respect to all the boundary components of $S$. 
In particular, the identity map is right-veering. 
\end{definition}

\begin{convention}
Assume that $\chi(S) < 0$, i.e., $S$ admits a complete hyperbolic metric with finite area and geodesic boundary.
By the Nielsen-Thurston classification \cite{fm}, any $\phi \in \Aut(S, \partial S)$ is freely isotopic to a homeomorphism of $S$ of type either periodic, reducible or pseudo-Anosov.  
For each case, we say that $\phi \in \Aut(S, \partial S)$ is {\em periodic, reducible, pseudo-Anosov}, respectively.
If $S$ is an annulus or a disc, every element $\phi \in \Aut(S,\partial S)$ is regarded as periodic. 
\end{convention}

We review the definition the {\em fractional Dehn twist coefficient} due to Honda, Kazez and Mati\'c \cite{hkm1}. The origin of the FDTC is Gabai and Oertel's {\em degeneracy slope}, which is used to study taut foliations and essential laminations \cite{go}.

\begin{definition}
\label{def of c(phi,C)}

(1) 
Assume that $\phi \in \Aut(S, \partial S)$ is periodic. 
Let $C_{1},\ldots,C_d$ be the boundary components of $S$. 
Let $T_C$ denote the right-handed Dehn twist along (a curve parallel to) $C$. 
There exist numbers $N \in \mathbb{N}$ and $M_1, \ldots, M_d \in \Z$ such that $\phi^{N}=T_{C_{1}}^{M_{1}}\cdots T_{C_{n}}^{M_d}$, hence $\phi^N$ is freely isotopic to the identity map. 
We define the fractional Dehn twist coefficient $c(\phi,C_i)=\frac{M_i}{N}$. 

(2) 
Assume that $\phi\in \Aut(S, \partial S)$ is freely isotopic to a pseudo-Anosov homeomorphism, $\Phi$.
Fix a boundary component $C$. 
We review the description in \cite[p.432]{hkm1}. 
Let $L$ be the stable (or unstable) geodesic measured lamination for $\Phi$ and $W$ be the connected component of $S\setminus L$ containing $C$. It is known that $W$ is homeomorphic to $(S^{1} \times [0,1]) \setminus A$ where $C$ is identified with $S^1 \times \{0\}$ and $A =\{p_0,\ldots,p_{m-1}\}$ a finite set of points in $S^{1} \times \{1\}$. 
Take $m$ semi-infinite geodesics $\lambda_{i} \subset W$ which start at a point $q_i \in C$ and approach $p_{i}$ (see Figure~\ref{fig:fracDehn} left). 
\begin{figure}[htbp]
 \begin{center}
\SetLabels
(0.2*.5)  $C$\\
(0.22*.84)  $W$\\
(0.44*.95)  $p_0$\\ 
(0.44*.05)  $p_1$\\
(0*.05)  $p_2$\\
(0*.95)  $p_3$\\
(.33*.75) $\lambda_{0}$\\
(.33*.26) $\lambda_1$\\
(.11*.75) $\lambda_3$\\
(.11*.26) $\lambda_2$\\
(.28*.6) $q_0$\\
(.28*.4) $q_1$\\
(.15*.6) $q_3$\\
(.15*.4) $q_2$\\
(1.03*.05)  $\phi(\lambda_{0})$\\
  \endSetLabels
\strut\AffixLabels{\includegraphics*[scale=0.5, width=80mm]{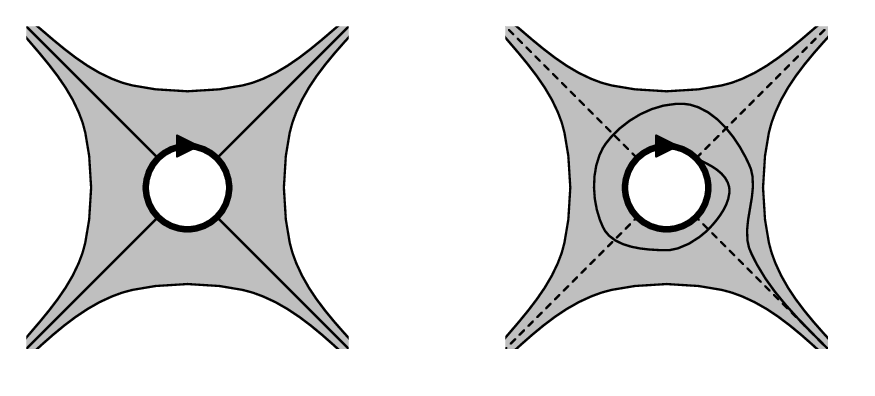}}
 \caption{Pseudo-Anosov case. $c(\phi, C)=5/4$.}
 \label{fig:fracDehn}
  \end{center}
\end{figure}
There exists $j \in \{0, \ldots, m-1\}$ such that  $\Phi(\lambda_i)=\lambda_{i+j}$.
Let $H: C \times [0,1] \to C$ be the free isotopy from $\phi$ to $\Phi$ restricted to $C$.  
We have $H(q_i, 0)=\phi(q_i)=q_i$ and $H(q_i, 1)=\Phi(q_i)=q_{i+j}$.
There exists $n\in \Z$ such that the arc $\alpha_t=H(q_0, t)$ starts from $q_0$ and winds around $C$ in the positive direction for $n+\frac{j}{m}$ times and then stops at $q_{j}$. 
We define $c(\phi,C) = n+\frac{j}{m}$. 
For example, $\phi$ depicted in Figure~\ref{fig:fracDehn} has $c(\phi, C)= 1+ \frac{1}{4} = \frac{5}{4}$.

(3) Suppose that $\phi\in\Aut(S, \partial S)$ is reducible. For each boundary component $C$ of $S$ there exists subsurface $S'$ of $S$ containing $C$ such that $\phi(S')=S'$ and $\phi|_{S'}$ is either periodic or pseudo-Anosov.
We define $c(\phi,C)=c(\phi|_{S'},C)$.
\end{definition}

Next we introduce a new notion, the FDTC for {\em closed braids}.

\begin{definition}\label{def of FDTC braid}
Let $L$ be a closed $n$-braid in an open book $(S,\phi)$ and $C$ be a boundary component of $S$.
We move $L$ by a braid-isotopy so that the $n$ points $L \cap S_0$ in the page $S_0$ stay in a collar neighborhood $\nu(C) \subset S$ of $C$. 
$$\{x_1,\dots,x_n\}: = L \cap S_0 \subset \nu(C)$$
Cutting the complement of the binding along the page $S_0$ we get a product region $S \times [0,1]$ and $L$ gives rises to an element, $\beta_L$, of the $n$-stranded surface braid group $B_n(S)$ of $S$. 
Let $i(\beta_L) \in \MCG(S, \{x_,\dots,x_n\})$ denote the image of $\beta_L$ under the {\em generalized  Birman exact sequence} \cite[Theorem 9.1]{fm}
$$
1 \to B_n(S) \stackrel{i}{\to} \MCG(S, \{x_,\dots,x_n\}) \to \MCG(S) \to 1
$$
where $\MCG(S, \{x_,\dots,x_n\})$ is the mapping class group of the surface $S$ with the $n$-marked points $x_1,\dots,x_n \in \nu(C)$.

We may assume that $\phi=\id$ in the collar neighborhood $\nu(C)$ so that $\phi$ can be regarded as an element of $\MCG(S, \{x_,\dots,x_n\})$. 
We define the fractional Dehn twist coefficient $c(\phi, L, C)$ of $L$ by 
\begin{equation}\label{def of c(phi,L,C)}
c(\phi, L, C) := c( i(\beta_L) \circ \phi, C).
\end{equation}

\end{definition}

Note that the map $i(\beta_{L}) \circ \phi$ can be understood as a monodromy of the fibration $M_{(S,\phi)} \setminus (B \cup L) \rightarrow S^{1}$ induced from the open book fibration $M_{(S,\phi)} \setminus B \rightarrow S^{1}$, where $B$ is the binding of the open book.

\begin{remark}

Given a closed braid $L$, the element $\beta_L \in B_n(S)$ is uniquely determined up to ``twisted'' conjugation in the following sense (see also \cite{Sk}):
Suppose that closed braids $L_1$ and $L_2$ are braid-isotopic with the same $n$-marked points $L_1 \cap S_0 = L_2 \cap S_0 \subset \nu(C)$. 
Then there exists a braid $\gamma \in B_n(S)$ such that $$\beta_{L_1} = \gamma^{-1} \beta_{L_2} \gamma^\phi$$ (see \cite[p.184]{Sk} for the definition of $\gamma^\phi$). 
In $\MCG(S, \{x_,\dots,x_n\})$ this implies:
\begin{eqnarray*}
i(\beta_{L_1}) \circ \phi &=&
(i(\gamma))^{-1} \circ i(\beta_{L_2}) \circ (\phi \circ i(\gamma) \circ \phi^{-1}) \circ \phi \\
&=&
(i(\gamma))^{-1} \circ (i(\beta_{L_2}) \circ \phi) \circ i(\gamma).
\end{eqnarray*}
Therefore, $i(\beta_{L_1}) \circ \phi$ and $i(\beta_{L_2}) \circ \phi$ are conjugate in $\MCG(S, \{x_,\dots,x_n\})$ and (\ref{def of c(phi,L,C)}) is well-defined. 
\end{remark}

\begin{example}
The difference $c(\phi,L,C)-c(\phi,C)$ can be arbitrary large.
For example, the open book decomposition $(S, \phi)=(D^{2},id)$ of $S^{3}$ has $c(\phi,\partial S) = 0$. If $L$ is a closed $2$-braid representing the $(2,k)$-torus knot then $c(\phi,L,\partial S)=\frac{k}{2}$. 
Another example is: if $L$ is a closed $1$-braid in a general open book $(S, \phi)$ and runs around a binding component $C$ for $k$ times in the direction of $C$ then  $c(\phi, L, C)=k$. 
\end{example}

\begin{example}\label{ex:unknot}
If $L$ is a meridian (i.e., $1$-braid) of a binding component $C$ then $c(\phi, L, C)=0$.  Note in general $c(\phi, L, C') \neq 0$ for other binding components $C'$.  
For example, consider an annulus open book with $k$-th power of the positive Dehn twist, $(S, \phi)=(A, D^k)$. Let $C, C'$ be the boundary circles of $A$ and $L$ be a meridian of $C$. Then $c(\phi, L, C) = 0$ and $c(\phi, L, C') = k$. 
This is because after isotopy $L \cap S_0 \subset \nu(C')$ and $L$ runs around $C'$ for $k$ times in the direction of $C'$. 
\end{example}

\begin{remark}\label{remark:cable}
From Definition \ref{def of FDTC braid} (cf. Definition \ref{def of c(phi,C)}-(3), the reducible case) it is clear that if $L'$ is a cable of a braid $L$ then $c(\phi, L, C)=c(\phi, L', C)$, except for the case when the surface $S$ is a disc, where the map $i:B_{n}(S) \rightarrow \MCG(S, \{x_1,\dots,x_n\})$ is an isomorphism.
\end{remark}

\subsection{Properties of $c(\phi, C)$ }

We develop computational techniques for $c(\phi, C)$, including the {\em key lemma} (Lemma~\ref{lemma:fracDehn}). 
In later sections we will obtain more estimates by using open book foliations.  
We start by listing basic properties of $c(\phi,C)$ which follow easily from the definition. 

\begin{proposition}\label{prop:property_of_c}

Let $C$ be a boundary component of $S$ and $\phi \in \Aut(S,\partial S)$.
\begin{enumerate}
\item $c(\phi^{N},C)=N c(\phi,C)$ for $N \in \Z$.
\item $c(T_{C},C)=1$ and $c(T_{C} \circ \phi,C) =c(\phi \circ T_{C}, C) = 1 + c(\phi, C)$.
\item $c(\phi, C) = c(\psi \circ \phi \circ \psi^{-1}, C)$ for any $\psi \in \Aut(S,\partial S)$. 

\end{enumerate}
\end{proposition}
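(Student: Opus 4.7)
The plan is to verify each of the three properties case by case according to the Nielsen-Thurston type of $\phi$, matching the definition of $c(\phi, C)$ in Definition~\ref{def of c(phi,C)}. Two standing facts will be used throughout: first, the boundary-parallel Dehn twist $T_C$ is central in $\Aut(S, \partial S)$, so $T_C$ commutes with every mapping class; second, $T_C$ is freely isotopic to the identity via an isotopy supported in a collar neighborhood of $C$ that rotates $C$ once in the positive direction. Together these give $c(T_C, C) = 1$, $c(T_C, C') = 0$ for $C' \neq C$, and, more importantly, imply that left or right composition with $T_C$ does not change the free isotopy class of a monodromy while adding one full positive twist along $C$ to the free isotopy.

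For (1), in the periodic case the relation $\phi^N = T_{C_1}^{M_1}\cdots T_{C_d}^{M_d}$ gives $\phi^{kN} = T_{C_1}^{kM_1}\cdots T_{C_d}^{kM_d}$ for every $k \in \Z$, so $c(\phi^k, C_i) = kM_i/N = k\,c(\phi, C_i)$. In the pseudo-Anosov case, $\phi^k$ has pseudo-Anosov representative $\Phi^k$; the prong-permutation index on the crown at $C$ multiplies by $k$, and the free isotopy from $\phi^k$ to $\Phi^k$ restricted to $C$ can be taken to be the $k$-fold concatenation of the free isotopy from $\phi$ to $\Phi$, so its winding along $C$ multiplies by $k$. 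The reducible case reduces to these by restricting to the $\phi$-invariant subsurface $S' \subset S$ containing $C$, on which $\phi^k|_{S'} = (\phi|_{S'})^k$.

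For (2), the identity $c(T_C, C) = 1$ is the periodic case with $N = M = 1$. Using centrality, $(T_C \circ \phi)^N = T_C^N \phi^N$; in the periodic case this equals $T_C^N \cdot T_{C_1}^{M_1}\cdots T_{C_d}^{M_d}$, giving $c(T_C \circ \phi, C) = (N + M_C)/N = 1 + c(\phi, C)$ where $M_C$ denotes the exponent attached to $C$. In the pseudo-Anosov case, $T_C \circ \phi$ has the same pseudo-Anosov representative $\Phi$ as $\phi$ (since $T_C$ is freely isotopic to the identity), and the free isotopy from $T_C \circ \phi$ to $\Phi$ differs from that of $\phi$ by the collar rotation from $T_C$, which increases the winding integer $n$ of Definition~\ref{def of c(phi,C)}(2) by $1$ while leaving $j/m$ unchanged. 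The reducible case again descends to the invariant subsurface containing $C$.

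For (3), first observe that $\psi \phi \psi^{-1}$ has the same Nielsen-Thurston type as $\phi$. In the periodic case $(\psi \phi \psi^{-1})^N = \psi(\phi^N)\psi^{-1} = \psi\bigl(T_{C_1}^{M_1}\cdots T_{C_d}^{M_d}\bigr)\psi^{-1}$, and because $\psi$ fixes $\partial S$ pointwise, $\psi(C_i)$ is isotopic to $C_i$ rel $\partial S$, giving $\psi T_{C_i} \psi^{-1} = T_{C_i}$ in $\Aut(S, \partial S)$. In the pseudo-Anosov case, $\psi \Phi \psi^{-1}$ is a pseudo-Anosov representative of $\psi \phi \psi^{-1}$ whose crown at $C$ is the $\psi$-image of the crown of $\Phi$; the combinatorial data $j, m$ are invariants of this picture, and the free isotopy at $C$ can be conjugated by $\psi|_C$, which is the identity, so $n$ is preserved as well. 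The reducible case follows from the invariance of the canonical reduction system under conjugation combined with the previous two cases. The main technical obstacle throughout is the careful bookkeeping of the free isotopy in the pseudo-Anosov setting; once a model free isotopy for $T_C$ is fixed and one uses that $\psi$ is the identity on each boundary component, everything falls into place.
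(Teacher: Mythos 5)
Your case-by-case verification is correct and is exactly the ``straightforward from the definition'' argument that the paper asserts without writing out: the periodic case reduces to manipulating the relation $\phi^{N}=T_{C_{1}}^{M_{1}}\cdots T_{C_{d}}^{M_{d}}$ using the centrality of boundary twists, the pseudo-Anosov case tracks the invariant lamination, the prong data $(j,m)$, and the winding of the free isotopy along $C$, and the reducible case descends to the $\phi$-invariant subsurface containing $C$. No gaps; the only points that deserve a passing remark in a written-up version are the well-definedness of the periodic formula under change of $N$, and the $N<0$ pseudo-Anosov case (where stable and unstable laminations swap but the crown data and hence the coefficient negate as required).
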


\begin{proposition}[Honda, Kazez and Mati\'c \cite{hkm1}]
If $\phi$ is periodic then $\phi$ is right-veering if and only if $c(\phi,C) \geq 0$ for all $C$.
If $\phi$ is pseudo-Anosov then $\phi$ is right-veering with respect to $C$ if and only if $c(\phi,C)>0$.
\end{proposition}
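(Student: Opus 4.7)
The plan rests on two observations. The first, valid for any $\phi \in \Aut(S,\partial S)$ and any $N \geq 1$, is that $\phi$ is right-veering at $C$ iff $\phi^{N}$ is. The forward direction follows because right-veering diffeomorphisms are closed under composition (if $\phi$ and $\psi$ are right-veering, then $\phi\circ\psi(\gamma) \leq \psi(\gamma) \leq \gamma$ using that $\psi(\gamma)$ is itself an essential arc based at $* \in C$). The reverse direction uses that $\phi$, being an orientation-preserving homeomorphism fixing $*$, preserves the relation ``$\gamma > \gamma'$'' near $*$: so if $\phi(\gamma) > \gamma$, i.e.\ if $\phi$ pushes $\gamma$ to the left, for some essential arc $\gamma$, then iterating and applying order-preservation gives $\phi^{N}(\gamma) > \cdots > \phi(\gamma) > \gamma$, contradicting right-veeringness of $\phi^{N}$. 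The second observation is that for each Nielsen--Thurston type of $\phi$, the coefficient $c(\phi,C)$ has a direct interpretation as a rotation controlling the left/right behavior of $\phi(\gamma)$ near its basepoint on $C$.

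Periodic case. By Definition \ref{def of c(phi,C)}(1), $\phi^{N} = T_{C_1}^{M_1}\cdots T_{C_d}^{M_d}$ with $c(\phi,C_i) = M_i/N$. Fix $C_i$ and an essential arc $\gamma$ starting on $C_i$. After isotoping, the portion of $\gamma$ near $C_i$ lies in a collar neighborhood of $C_i$; the factors $T_{C_j}$ for $j \neq i$ then act as the identity on this germ, since their supports lie in disjoint collars of the other boundary components. Hence the side on which $\phi^{N}(\gamma)$ emerges at $C_i$ is controlled by $T_{C_i}^{M_i}$ alone, which is right-veering at $C_i$ iff $M_i \geq 0$. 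Combined with the first observation, this yields the periodic claim: $\phi$ is right-veering iff $c(\phi,C_i) \geq 0$ for every $i$.

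Pseudo-Anosov case. Use a pA representative $\Phi$ freely isotopic to $\phi$ and the crown $W \subset S \setminus L$ at $C$, with its $m$ prong arcs $\lambda_i$ from $q_i \in C$ to ideal points $p_i$, as in Definition \ref{def of c(phi,C)}(2). By definition $c(\phi,C) = n + j/m$, with $\Phi(\lambda_i) = \lambda_{i+j}$, and the trace $\alpha_t = H(q_0,t)$ of the free isotopy $H$ winds around $C$ positively $n+j/m$ times from $q_0$ to $q_j$. Since $\phi$ is $\Phi$ followed by the reverse isotopy, the arc $\phi(\gamma)$ for $\gamma$ based at $q_0$ is obtained from $\Phi(\gamma)$ by dragging its basepoint from $q_j$ back to $q_0$ along $\alpha_t^{-1}$. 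Working in a fixed collar $C \times [0,\epsilon) \hookrightarrow S$ and tracking the initial tangent direction, this dragging rotates the initial direction of $\Phi(\gamma)$ by $-c(\phi,C)$ turns around $C$, while $\Phi$ itself shifts the prong sectors by $+j$; a direct computation shows the net effect is that $\phi(\gamma)$ lies strictly on the right (resp.\ left) of $\gamma$ precisely when $c(\phi,C) > 0$ (resp.\ $<0$). When $c(\phi,C) = 0$ we may take $H|_C$ to be trivial and $\Phi$ then fixes each prong individually, so transverse pA dynamics near any $\lambda_i$ forces arcs on one side of $\lambda_i$ to be pushed strictly left and on the other side strictly right, obstructing right-veeringness.

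The main obstacle I expect is the precise direction-comparison in the pseudo-Anosov case, in particular verifying that the winding of $\alpha_t$ and the prong shift by $\Phi$ combine to give the claimed rotation by $c(\phi,C)$ turns. I would address this by lifting to the universal cover of the collar $C \times [0,\epsilon)$, where $C$ unwraps to $\R$ and windings become translations, so that comparing the initial directions of $\gamma$ and $\phi(\gamma)$ becomes an unambiguous comparison of real numbers. The same setup handles generic (non-prong) basepoints by interpolation and rules out right-veeringness at $c(\phi,C) = 0$ by an explicit choice of arc running between two consecutive prongs.
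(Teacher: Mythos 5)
The paper itself offers no proof of this proposition --- it is quoted from Honda--Kazez--Mati\'c \cite{hkm1} --- so there is nothing internal to compare against; I am judging your proposal on its own. Your overall strategy (reduce to the model periodic and pseudo-Anosov representatives, plus the observation that right-veering passes to and from powers) matches the original, and the power observation is fine. But there is a genuine gap at the central step of both cases: you repeatedly decide the relation between $\gamma$ and $\phi(\gamma)$ by looking at germs or initial tangent directions at the basepoint. That relation is defined only after the two arcs are isotoped to minimal geometric intersection (equivalently, by comparing the endpoints of their lifts on $\partial \overline{S}\setminus \widetilde{C}$, as in Theorem~\ref{theorem:translation}); it is not determined by the germ of a chosen representative. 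Indeed every $\phi \in \Aut(S,\partial S)$ may be taken to be the identity on a collar of $\partial S$, so a literal germ argument would ``prove'' that every monodromy is right-veering. Concretely, in the periodic case your claim that the factors $T_{C_j}$, $j \neq i$, do not affect the comparison at $C_i$ is false: $T_{C_j}^{-1}$ is the identity near $C_i$ and yet is \emph{not} right-veering with respect to $C_i$ (since $T_{C_j}$ is a positive Dehn twist, $\Theta_{C_i}(T_{C_j}) \geq id$ and $\neq id$ by injectivity of $\Theta_{C_i}$, so $\Theta_{C_i}(T_{C_j}^{-1})(x) < x$ somewhere; an arc running from $C_i$ to $C_j$ is moved strictly to the left). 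So ``right-veering at $C_i$ iff $M_i \geq 0$'' is wrong as a per-component statement. The global statement can still be rescued, but by a different route: if all $M_i \geq 0$ then $\phi^N$ is a product of positive boundary twists, hence right-veering by the monoid property; if some $M_j < 0$, exhibit an explicit arc moved strictly left, e.g.\ an essential arc based on $C_j$ supported away from the other boundary collars, on which $\phi^N$ acts as $T_{C_j}^{M_j}$ alone.

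In the pseudo-Anosov case the same defect recurs (``tracking the initial tangent direction'' of $\Phi(\gamma)$ through the unwinding isotopy), and the decisive step --- ``a direct computation shows the net effect is that $\phi(\gamma)$ lies strictly on the right iff $c(\phi,C)>0$'' --- is precisely the content of \cite[Prop.~3.2]{hkm1} and is asserted rather than carried out. Your proposed remedy, lifting to the universal cover of the collar $C\times[0,\epsilon)$, is not enough, because the order depends on the entire arc: one must work in the universal cover of $S$ itself, or argue via the dynamics of $\Phi$ on the crown $W$ (iterates of $\Phi$ attract arcs to the unstable lamination, and $j>0$ or $n>0$ forces the image arc to exit $W$ strictly to the right of every prong sector that $\gamma$ can occupy). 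Your treatment of $c(\phi,C)=0$ is the right idea but again only a sketch. A useful bookkeeping substitute for all of this is the elementary fact that for $h$ commuting with $x\mapsto x+1$ one has $\tau(h)>0 \Rightarrow h>id$ and $h\geq id \Rightarrow \tau(h)\geq 0$, combined with $c(\phi,C)=\tau(\Theta_C(\phi))$ and the characterization of right-veering as $\Theta_C(\phi)\geq id$; what remains is then exactly the Nielsen--Thurston-specific analysis that your ``direct computation'' elides.
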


The following shows that the topology of $S$ governs $c(\phi, C)$. 
\begin{proposition}
\label{prop:image_c}
Let $S=S_{g, d}$ be an oriented genus $g$ surface with $d>0$ boundary components. 
Let $C$ be a boundary component of $S$.
\begin{enumerate}
\item 
If $\phi$ is periodic then $c(\phi,C) \in \left\{  \left. \frac{p}{q} \: \right|   \: p \in \Z, q \in \{1,2,\ldots,4g+2\} \right\}$.
\item
If $\phi$ is pseudo-Anosov then $
c(\phi,C) \in \left\{ \left.\frac{p}{q} \: \right| \: p \in \Z, q \in \{1,2,\ldots,4g+d-3 \} \right\}$. 
\end{enumerate}
\end{proposition}

\begin{proof}
If $S$ is an annulus (or a disc), then $c(\phi, C) \in \Z$ (or $c(\phi, C)=0$) so the statement holds. 
Hence in the rest of the proof we assume $\chi(S)<0$. 
By fixing a hyperbolic metric on $S$, we regard $S$ as a complete hyperbolic surface with geodesic boundary and finite area.

Assume that $\phi$ is periodic with periodicity $M$. 
Let $\widehat S$ be the genus $g$ surface obtained by capping off the $d$ boundary circles.
Extend $\phi$ to $\widehat \phi \in \Aut(\widehat S)$ by setting $\widehat \phi = id$ on $\widehat S - S$. 
Clearly $\widehat\phi$ has period $M$. 
The ``$4g+2$ theorem'' \cite[Theorem 7.5]{fm} implies that $M \leq 4g+2$. 
Since $M c(\phi, C) \in \Z$ we get (1).

Next assume that $\phi$ is pseudo-Anosov. 
Let $L, W$ and $m$ be as in Definition~\ref{def of c(phi,C)}-(2). 
By Definition~\ref{def of c(phi,C)} we have:
\[ c(\phi,C) \in \left\{ \left. \frac{p}{m} \: \right| \: p \in \Z \right\}. \]
By the Gauss-Bonnet theorem $\textrm{Area}(W)=m\pi$ and $\textrm{Area}(S-W)\geq (d-1)\pi$, and we have: 
\[ m \pi + (d-1)\pi \leq \textrm{Area}(W) + \textrm{Area}(S- W) = \textrm{Area}(S) = (4g-4+2d) \pi, \]
i.e., $m \leq 4g-3+d$, thus we obtain (2).
\end{proof}

The following is a key estimate of $c(\phi, C)$ that will be used repeatedly in this paper. One can also find a similar result in \cite[Corollary 2.6]{kr}.

\begin{lemma}[Key lemma]
\label{lemma:fracDehn}
Let $C$ be a boundary component of $S$ and $\phi \in \Aut(S,\partial S)$. 
If there exists an essential arc $\gamma \subset S$ that starts on $C$ and satisfies 
$T_{C}^{m}(\gamma) \geq \phi(\gamma) \geq T_{C}^{M}(\gamma)$ 
for some $m, M \in \Z$ then $m \leq c(\phi,C) \leq M$. 
\end{lemma}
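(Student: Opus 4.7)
My plan is to iterate the hypothesis and then analyze $\phi^N(\gamma)$ near the base point $* \in C$ case-by-case according to the Nielsen-Thurston classification. The boundary-parallel Dehn twist $T_C$ lies in the center of $\Aut(S, \partial S)$ and so commutes with $\phi$; moreover, one may take a representative diffeomorphism for $\phi$ which is the identity in a collar of $\partial S$, which ensures that $\phi$ preserves the ordering $\geq$ on isotopy classes of essential arcs based at $*$. Applying $\phi$ and $T_C$ alternately to both sides of the hypothesis and using $\phi T_C = T_C \phi$, a straightforward induction yields
\[ T_C^{Nm}(\gamma) \;\geq\; \phi^N(\gamma) \;\geq\; T_C^{NM}(\gamma) \qquad \text{for every } N \geq 1. \]
Since the germ at $*$ of $T_C^b(\gamma)$ lies (weakly) to the right of that of $T_C^a(\gamma)$ exactly when $a \leq b$, we have the elementary comparison $T_C^a(\gamma) \geq T_C^b(\gamma) \iff a \leq b$.

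For the \emph{periodic} case I would pick $N$ with $\phi^N = T_{C_1}^{M_1} \cdots T_{C_d}^{M_d}$ in $\Aut(S, \partial S)$ and $c(\phi, C) = M_i/N$ where $C = C_i$. Since the twists $T_{C_j}$ for $j \neq i$ are supported in collars disjoint from $C$, the map $\phi^N$ restricts to $T_C^{M_i}$ on a collar of $C$; in particular $\phi^N(\gamma)$ and $T_C^{M_i}(\gamma)$ agree as arcs in a neighborhood of $*$, so they share the same germ. The iterated inequality, combined with the elementary comparison, then gives $Nm \leq M_i \leq NM$, hence $m \leq c(\phi, C) \leq M$. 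For the \emph{pseudo-Anosov} case, I would use the crown picture of Definition~\ref{def of c(phi,C)}(2): choose $N$ such that the pA representative $\Phi^N$ fixes each geodesic $\lambda_i$ setwise and such that $N c(\phi, C) \in \Z$. The restriction to $C$ of the free isotopy between $\phi^N$ and $\Phi^N$ then has net winding $N c(\phi, C)$ on the orbit of $q_0$, which lets one factor $\phi^N = T_C^{N c(\phi, C)} \circ \psi$ in $\Aut(S, \partial S)$, with $\psi$ freely isotopic to $\Phi^N$ via an isotopy whose restriction to $C$ has trivial rotation. Such a $\psi$ acts trivially on germs of arcs at $*$, so $\phi^N(\gamma)$ and $T_C^{N c(\phi, C)}(\gamma)$ have the same germ at $*$, and the iterated inequality delivers $Nm \leq N c(\phi, C) \leq NM$.

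The \emph{reducible} case reduces to the two above: let $S' \subset S$ be the $\phi$-invariant subsurface containing $C$ on which $\phi|_{S'}$ is periodic or pseudo-Anosov. The initial sub-arc of $\gamma$ lying in $S'$ is an essential arc of $S'$ based at $C$, sharing the germ of $\gamma$ at $*$, so the hypothesis descends verbatim to $\phi|_{S'}$ and we invoke one of the preceding cases. The main obstacle I foresee is the pseudo-Anosov step: rigorously translating the free-isotopy description of $c(\phi, C)$ into the algebraic factorization $\phi^N = T_C^{N c(\phi, C)} \circ \psi$ in the mapping class group, and verifying that the ``non-twisting'' factor $\psi$ acts trivially on germs at $*$, is the delicate point of the proof.
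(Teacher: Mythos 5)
Your overall strategy (iterate the inequality, then analyze $\phi^N(\gamma)$ near $*$ via the Nielsen--Thurston type) is quite different from the paper's, which is a two-line contradiction: if $M < c(\phi,C)$ then $c(T_C^{-M}\phi,C)>0$, so by Propositions 3.1--3.2 of Honda--Kazez--Mati\'c the map $T_C^{-M}\phi$ is \emph{strictly} right-veering with respect to $C$, whence $T_C^M(\gamma) > \phi(\gamma)$, contradicting the hypothesis. Your route amounts to reproving those HKM propositions from scratch, and as written it has a genuine gap at exactly the point where their content is needed.

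The problem is the repeated ``same germ'' claim. The ordering $\geq$ is defined by first realizing the \emph{minimal} geometric intersection number (equivalently, taking geodesic representatives) and only then comparing near $*$. Two arcs that coincide on a neighborhood of $*$ for some convenient representatives need not be equal in this ordering: for instance $\gamma$ and $T_{C_j}^{M_j}(\gamma)$ with $C_j \neq C_i$ agree near $*\in C_i$ for the obvious representatives, yet they are non-isotopic essential arcs, so their geodesic representatives diverge at $*$ and one is \emph{strictly} to the right of the other. Consequently, in your periodic case the identity $\phi^N = T_{C_i}^{M_i}\prod_{j\neq i}T_{C_j}^{M_j}$ does \emph{not} give $\phi^N(\gamma) = T_{C_i}^{M_i}(\gamma)$ in the ordering; it gives $\phi^N(\gamma) = T_{C_i}^{M_i}(\beta)$ with $\beta = \prod_{j\neq i}T_{C_j}^{M_j}(\gamma)$ generally strictly on one side of $\gamma$. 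What you actually need is that $\beta$ lies strictly between $T_{C_i}^{-1}(\gamma)$ and $T_{C_i}(\gamma)$, i.e.\ that twists supported away from $C_i$ displace $\gamma$ at $*$ by less than one full boundary twist; this is precisely the quantitative content of the HKM propositions (or of the translation-number picture in Theorem~4.14 of the paper) and is nowhere established in your argument. Granting such a bound, your iterated inequality would yield $Nm \leq M_i + 1$ rather than $Nm\leq M_i$, and you would then need to divide by $N$ and let $N\to\infty$ to conclude $m\leq c(\phi,C)$ --- a step absent from your write-up. The pseudo-Anosov case has the same defect in a stronger form: the asserted factorization $\phi^N = T_C^{Nc(\phi,C)}\circ\psi$ with $\psi$ ``acting trivially on germs at $*$'' is false as stated (such a $\psi$ moves every essential arc to a non-isotopic arc, whose geodesic representative veers strictly left or right at $*$); the correct statement is again a ``within one full twist'' estimate coming from the prong structure of the crown, which you flag as delicate but do not supply. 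Until those two estimates are proved, the proposal does not establish the lemma.
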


\begin{proof}
Assume contrary that $M < c(\phi,C)$ 
then $c(T_{C}^{-M}\phi, C) > 0$.
Propositions 3.1 and 3.2 of \cite{hkm1} imply that $T_C^{-M}\phi$ is strictly right-veering with respect to $C$.
Hence for any immersed geodesic arc $\alpha$ which starts from $C$ we have $\alpha > T_C^{-M}\phi(\alpha)$, hence $T_C^M(\alpha) > \phi(\alpha)$. 
This contradicts the assumption. 
The proof of $m\leq c(\phi,C)$ is similar.
\end{proof}

Practically, in order to compute $c(\phi, C)$ one may need to know the Nielsen-Thurston normal form for $\phi$ and its invariant measured lamination. 
Proposition~\ref{prop:image_c} and Lemma~ \ref{lemma:fracDehn} provide effective methods to compute $c(\phi, C)$ without using Nielsen-Thurston theory. 
Recall that in \cite{mo} Mosher proves that the mapping class group of $S$ is automatic and hence each element of $\Aut(S,\partial S)$ admits a normal form called {\em Mosher's normal form}.

\begin{theorem}
\label{theorem:computation}
Let $S=S_{g, d}$ and $D(S) = \max\{4g+2,4g+d-3\}$. Fix an integer $N > D(S)(D(S)-1)$.
Suppose that there exists a geodesic arc $\gamma\subset S$ that starts on $C$ and an integer $M$ satisfying
\[ T_{C}^{M}(\gamma) \geq \phi^{N}(\gamma) > T_{C}^{M+1}(\gamma). \]
Then the fractional Dehn twist coefficient has
\[ c(\phi,C) = \left[\frac{M}{N},\frac{M+1}{N} \right] \cap \left\{ \left. \frac{p}{q} \: \right|\: p \in \Z, q \in \{ 1 , 2 , \ldots, D(S) \} \, \right\}. \]
Moreover, $c(\phi,C)$ can be computed in polynomial time with respect to the length of Mosher's normal form of $\phi$.
\end{theorem}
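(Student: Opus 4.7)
The plan is to combine the Key Lemma (Lemma~\ref{lemma:fracDehn}) with the denominator bound of Proposition~\ref{prop:image_c}, and then observe that an interval of length $1/N$ contains at most one rational with denominator at most $D(S)$.

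First I would apply the Key Lemma to $\phi^{N}$. Since the hypothesis $T_{C}^{M}(\gamma) \geq \phi^{N}(\gamma) > T_{C}^{M+1}(\gamma)$ gives in particular the non-strict inequality $T_{C}^{M}(\gamma) \geq \phi^{N}(\gamma) \geq T_{C}^{M+1}(\gamma)$, Lemma~\ref{lemma:fracDehn} yields $M \leq c(\phi^{N},C) \leq M+1$. Combined with the homogeneity $c(\phi^{N},C) = N \cdot c(\phi,C)$ of Proposition~\ref{prop:property_of_c}(1), this gives $c(\phi,C) \in [M/N,(M+1)/N]$. Next, I would invoke Proposition~\ref{prop:image_c} to write $c(\phi,C) = p/q$ with $q \in \{1,\ldots,D(S)\}$; the reducible case is reduced to the periodic/pseudo-Anosov case via a short Euler characteristic check on the invariant subsurface $S' = S_{g',d'}$ containing $C$: the bounds $g' \leq g$ and $2g'+d' \leq 2g+d$ (from additivity of $\chi$) give $4g'+d' \leq 4g+d$, hence $D(S') \leq D(S)$. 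Therefore $c(\phi,C)$ belongs to the intersection.

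For uniqueness I would argue as follows. Suppose $p_{1}/q_{1} \neq p_{2}/q_{2}$ both lie in $[M/N,(M+1)/N]$ with $q_{i} \in \{1,\ldots,D(S)\}$. Then $1/(q_{1}q_{2}) \leq |p_{1}/q_{1}-p_{2}/q_{2}| \leq 1/N$, so $q_{1}q_{2} \geq N$. If $q_{1} = q_{2} = q$, two distinct fractions with denominator $q$ differ by at least $1/q \geq 1/D(S)$, forcing $N \leq D(S)$, which is incompatible with $N > D(S)(D(S)-1) \geq D(S)$ since $D(S) \geq 2$. If $q_{1} \neq q_{2}$, say $q_{1} < q_{2} \leq D(S)$, then $q_{1} \leq D(S)-1$ and $q_{1}q_{2} \leq D(S)(D(S)-1) < N$, again a contradiction. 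Hence the intersection is the singleton $\{c(\phi,C)\}$.

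Finally, for the polynomial-time statement I would use Mosher's automatic structure on $\Aut(S,\partial S)$. Since $N = N(S)$ is fixed, the normal form of $\phi^{N}$ has length linear in that of $\phi$ by the fellow-traveller property; the action on the fixed geodesic arc $\gamma$ and the comparisons $T_{C}^{k}(\gamma) \geq \phi^{N}(\gamma)$ are then carried out within the automatic structure in polynomial time. The correct $M$ is located by binary search within a range linear in $|\phi^{N}|$, since $M$ is controlled by the winding of $\phi^{N}(\gamma)$ around $C$. The final intersection with the finite set of rationals of denominator $\leq D(S)$ is of constant size and adds only constant overhead. The main obstacle I expect is Step~4: rigorously certifying the polynomial-time comparison of arcs on $S$ and the a priori bound on $M$ requires careful appeal to the automatic-group machinery, whereas Steps 1--3 are quick consequences of the already-established Key Lemma and the denominator bound.
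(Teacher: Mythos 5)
Your proposal is correct and follows essentially the same route as the paper: the Key Lemma plus homogeneity give $c(\phi,C)\in[M/N,(M+1)/N]$, Proposition~\ref{prop:image_c} gives the denominator bound, the choice $N>D(S)(D(S)-1)$ forces the intersection to be a singleton, and Mosher's automatic structure gives polynomial-time computability. Your explicit uniqueness argument and the reduction of the reducible case to the invariant subsurface are details the paper leaves implicit, but they are not a different method.
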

 
\begin{proof}
By Lemma \ref{lemma:fracDehn} and Proposition \ref{prop:property_of_c}, $M \leq c(\phi^{N},C) = N c(\phi,C) \leq M+1$ so 
$c(\phi,C) \in [\frac{M}{N},\frac{M+1}{N}]$. 
On the other hand, by Proposition~\ref{prop:image_c}, $c(\phi,C) \in \{\frac{p}{q} \: | \: p \in \Z, q = 1 , 2 , \ldots,  D(S) \}$. Since we choose $N$ with $N > D(S)(D(S)-1)$, the intersection 
\[ \left[\frac{M}{N},\frac{M+1}{N}\right] \cap \left\{\frac{p}{q} \: | \: p \in \Z, q \in \{ 1 , 2 , \ldots, D(S) \} \, \right\} \]
consists of one rational number, which must be $c(\phi,C)$.

Next we show that $c(\phi,C)$ is computable in polynomial time. 
We define a partial ordering $<_{\gamma}$ on $\Aut(S,\partial S)$ by $\phi \leq_{\gamma} \psi$ if 
$\phi(\gamma) \geq \psi (\gamma)$. 
As shown in \cite[Theorem~2.1]{rw}, this partial ordering is determined in linear time with respect to the length $l(\phi)$ of Mosher's automatic normal form of $\phi$ (see \cite{mo} for the definition) by using Mosher's automatic structure of $\Aut(S,\partial S)$. 
By definition of Mosher's normal form, each generator $x$ of Mosher's normal form satisfies $c(\phi,C)=0$ so we have an a priori estimate $|c(\phi,C)| \leq  l(\phi)$. This implies that the above integer $M$ can be computable in polynomial time with respect to $l(\phi)$, hence so is $c(\phi,C)$. 
\end{proof}

\begin{corollary} 
If there exists a (possibly immersed) geodesic arc $\gamma \subset S$ that starts on $C \subset \partial S$ with 
$T_{C}^{m}(\gamma) = \phi^{N}(\gamma)$ for some $m,N \in \Z$ $(N \neq 0)$, then $c(\phi,C) = \frac{m}{N}$. 
\end{corollary}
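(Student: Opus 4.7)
The plan is to reduce the corollary directly to the key lemma (Lemma~\ref{lemma:fracDehn}) combined with the power-multiplicativity property of $c(\phi,C)$ recorded in Proposition~\ref{prop:property_of_c}(1). Set $\psi := \phi^{N}$, so the hypothesis becomes the single equality $T_{C}^{m}(\gamma) = \psi(\gamma)$. Since this is an equality of isotopy classes, it gives simultaneously the two inequalities
\[
T_{C}^{m}(\gamma) \geq \psi(\gamma) \quad \text{and} \quad \psi(\gamma) \geq T_{C}^{m}(\gamma),
\]
so one can invoke the key lemma with both $M$ and $m$ equal to the single integer $m$. This yields the sandwich $m \leq c(\psi,C) \leq m$, i.e.\ $c(\phi^{N},C) = m$, and then Proposition~\ref{prop:property_of_c}(1) gives $N c(\phi,C) = m$, so $c(\phi,C)=m/N$.

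The only point requiring a word of care is the hypothesis of Lemma~\ref{lemma:fracDehn}, which demands that $\gamma$ be \emph{essential}. This is automatic here: after fixing a hyperbolic metric on $S$ with geodesic boundary (as is implicit whenever one speaks of geodesic arcs on a surface of negative Euler characteristic), any proper geodesic arc starting on $\partial S$ is essential, since a boundary-parallel arc is not length-minimizing in its homotopy class rel endpoints. The parenthetical remark that $\gamma$ may be immersed causes no trouble either, because the proof of Lemma~\ref{lemma:fracDehn} explicitly uses the strict right-veering property against \emph{any} immersed geodesic arc based at $C$, not merely embedded ones.

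The case $N<0$ needs no separate argument: Proposition~\ref{prop:property_of_c}(1) was stated for all $N \in \Z$, and the two inequalities above apply to $\psi=\phi^{N}$ regardless of sign. Thus the whole corollary reduces to one invocation of the key lemma followed by division by $N$, and the only possible obstacle, namely verifying the essentiality hypothesis for an immersed geodesic arc, is dispatched by the two remarks just made.
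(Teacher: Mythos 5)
Your proof is correct and is essentially the argument the paper intends (the corollary is stated without proof, but it clearly is meant to follow from Lemma~\ref{lemma:fracDehn} and Proposition~\ref{prop:property_of_c}(1) exactly as you describe). You have also correctly flagged and disposed of the two genuine subtleties: that the Key Lemma is stated for essential embedded arcs whereas the corollary permits immersed geodesics, which is harmless because the proof of Lemma~\ref{lemma:fracDehn} already argues via strict right-veering against arbitrary immersed geodesic arcs based at $C$; and that the case $N<0$ is covered since Proposition~\ref{prop:property_of_c}(1) holds for all integer exponents.
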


\subsection{Alternative description of $c(\phi, C)$} 

We give an alternative description of the fractional Dehn twist coefficient which appears to be natural from a theoretical point of view and does not require the Nielsen-Thurston classification.
Let $\pi: \widetilde{S} \rightarrow S$ be the universal cover of $S$.
Fix a base point $* \in C \subset \partial S$ and its lift $\widetilde{*} \in \pi^{-1}(C) \subset \pi^{-1}(\partial S)$. Let $\widetilde{C}$ be the connected component of $\pi^{-1}(C)$ that contains $\widetilde{*}$. Since $S$ admits a hyperbolic metric there is an isometric embedding of $\tilde S$ to the Poincar\'e disc  $\mathbb{H}^{2}$. By attaching points at infinity to  $\widetilde{S}$ we obtain a compact disc $\overline{S} \subset \overline{\mathbb{H}^2}$.

For a homeomorphism $f: S \to S$ fixing the boundary pointwise we take the lift $\widetilde{f}: \widetilde{S} \rightarrow \widetilde{S}$ with $\widetilde{f}( \widetilde{*} ) =\widetilde{*}$.  
It uniquely extends to a homeomorphism $\overline{f}: \overline{S} \rightarrow \overline{S}$. 
The restriction $\overline{f}|_{\partial \overline S}$ is an invariant of the mapping class $[f] \in \MCG(S)$. 
Since $f = id$ on $\partial S$ and $\widetilde f(\widetilde*)=\widetilde*$ the map $\overline{f}$ fixes $\widetilde{C}$ pointwise. 
On the boundary at infinity $\partial \overline{S} \setminus \widetilde{C}$, that is homeomorphic to $\R$, the map $\overline f$ induces a homeomorphism of $\R$.

Consider the case: $f = T_C$ the positive Dehn twist along $C \subset \partial S$. 
The map $\overline{T_C}$ has no fixed points in the interior of the interval $\partial \overline{S} \setminus \widetilde{C}$. 
For if $\overline {T_C}(p_0)=p_0$ for some $p_0 \in \Int(\partial \overline{S} \setminus \widetilde{C})$ then $\overline {T_C}$ fixes (set-wise) the geodesic $\widetilde{\gamma} \subset \overline {\mathbb H^2}$ through $p_0$ and $\widetilde{*}$.
This means the Dehn twist $T_{C}$ fixes the geodesic ray $\pi(\widetilde{\gamma}) \subset S$ starting at $*$, which cannot happen.
Therefore, we can find a homeomorphism 
$$\Phi: \partial \overline{S} \setminus \widetilde{C} \to \R$$
such that 
$\Phi(\overline {T_C}(p)) = \Phi(p) + 1$.

Let $\widetilde{\textrm{Homeo}}^{+} (S^{1})$ 
be the group of orientation-preserving homeomorphisms of $\R$ that are lifts of orientation-preserving homeomorphisms of $S^{1}$. 
In other words, $\widetilde{\textrm{Homeo}}^{+} (S^{1})$ consists of elements of $\textrm{Homeo}^{+} (\R)$ that commute with the translation $x \mapsto x +1$.

Since $T_{C}$ is a central element in $\MCG(S)$ we can define a homomorphism 
$$\Theta_{C} : \MCG(S) \rightarrow \widetilde{\textrm{Homeo}}^{+} (S^{1}) \quad \text{by} \quad\Theta_{C}([f]) = \overline{f}|_{\partial \overline{S} \setminus \widetilde{C} }. $$
The map $\Theta_{C}$ is called the {\em Nielsen-Thurston homomorphism} and is intensively studied in \cite{sw} to describe a total left-invariant ordering of $\MCG(S)$. 
It is known that $\Theta_C$ is injective \cite{sw}. We note that $\Theta_{C}$ depends on various choices such as hyperbolic metrics on $S$ and identifications of $\partial \overline{S} \setminus \widetilde{C} $ with $\R$.

Let $\tau: \widetilde{\textrm{Homeo}}^{+} (S^{1}) \rightarrow \R$ be the {\em translation number}
defined by 
\[ \tau( h ) = \lim_{N \to \infty} \frac{h^{N}(x)-x}{N} \;\;\;(x \in \R).\]
It is well-known that the above limit exists and is independent of the choice of $x \in \R$.
The fractional Dehn twist coefficient is related to the Nielsen-Thurston map as follows.

\begin{theorem}
\label{theorem:translation}
$($cf. \cite[p.3]{ch}$)$
For $\phi \in \Aut(S, \partial S)$ we have $c(\phi,C) = \tau(\Theta_{C}(\phi))$.
\end{theorem}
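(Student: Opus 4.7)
\textbf{Proof proposal for Theorem~\ref{theorem:translation}.}

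The plan is to verify $c(\phi,C) = \tau(\Theta_C(\phi))$ on each of the three Nielsen–Thurston classes separately, using that both sides are multiplicative under powers: Proposition~\ref{prop:property_of_c}(1) gives $c(\phi^N,C) = N\, c(\phi,C)$, while the translation number satisfies $\tau(h^N) = N\tau(h)$. This lets us replace $\phi$ by a convenient power whenever needed.

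\emph{Periodic case.} Write $\phi^N = T_{C_1}^{M_1}\cdots T_{C_d}^{M_d}$ with $C = C_i$. The lifts of the $T_{C_j}$'s fixing $\widetilde*$ commute in $\widetilde{\textrm{Homeo}}^+(S^1)$ because $T_{C_1},\ldots,T_{C_d}$ commute in $\Aut(S,\partial S)$, so $\tau$ is additive across the product. By construction $\tau(\Theta_C(T_{C_i})) = 1$ (the lift acts as $x\mapsto x+1$ on $\partial\overline S\setminus\widetilde C$ by definition of the identification), while for $j\neq i$ the map $T_{C_j}$ has the lifts of its support arcs in $\partial\overline S\setminus\widetilde C$ as a $T_C$-invariant set of fixed points, forcing $\tau(\Theta_C(T_{C_j}))=0$. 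Hence $\tau(\Theta_C(\phi^N)) = M_i$, and dividing by $N$ gives the result.

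\emph{Pseudo-Anosov case.} Use the data of Definition~\ref{def of c(phi,C)}(2): the crown $W$ containing $C$, the spikes $\lambda_0,\ldots,\lambda_{m-1}$, the cyclic permutation $\Phi(\lambda_i)=\lambda_{i+j}$, and the integer $n$ appearing in the free isotopy $H$. The lifted spikes $\widetilde\lambda_i$ give well-defined points $\widetilde p_i\in\partial\overline S\setminus\widetilde C$ which, after a convenient identification, sit at the positions $i/m\in\R$ (with $T_C$ acting by $+1$ and sending $\widetilde p_i\mapsto \widetilde p_{i+m}$). I would then show that the free isotopy $H$ between $\phi$ and $\Phi$ forces
\[
\overline\phi(\widetilde p_i) \;=\; \widetilde p_{i+j} + n \qquad (i\in\Z),
\]
which is the key geometric input: the winding of the arc $\alpha_t = H(q_0,t)$ by $n + j/m$ turns around $C$ translates in the universal cover to the additive shift by $n$ on top of the cyclic shift by $j$. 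Iterating, $\overline\phi^{\,m}(\widetilde p_0) = \widetilde p_0 + j + mn$, so $\tau(\Theta_C(\phi^m)) = j+mn$ and $\tau(\Theta_C(\phi)) = n + j/m = c(\phi,C)$.

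\emph{Reducible case.} Let $S'\subset S$ be the reducing subsurface containing $C$ on which $\phi|_{S'}$ is periodic or pseudo-Anosov, so that $c(\phi,C)=c(\phi|_{S'},C)$ by definition. The inclusion $S'\hookrightarrow S$ lifts to an inclusion of universal covers, and the boundary component $\widetilde C$ is shared. One checks that the restriction of $\Theta_C^S(\phi)$ to the portion of $\partial\overline S\setminus\widetilde C$ coming from $\partial\overline{S'}\setminus\widetilde C$ agrees with $\Theta_C^{S'}(\phi|_{S'})$, while the complementary arcs of $\partial\overline S\setminus\widetilde C$ are fixed pointwise by $\overline\phi$ (they are limits of rays into the $\phi$-invariant complement of $S'$). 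Translation numbers are computed from orbits of any point, so it suffices to pick a point in the $S'$-portion, reducing to the two cases already handled.

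\emph{Main obstacle.} The delicate step is the pseudo-Anosov computation, specifically the displayed identity $\overline\phi(\widetilde p_i) = \widetilde p_{i+j}+n$. Making this rigorous requires carefully tracking the lift of the free isotopy $H$ restricted to $C$ and its extension to infinity, and showing that the integer $n$ in Honda–Kazez–Matić's definition coincides with the $T_C$-shift produced by the lift. Once that identity is secured, the rest of the argument is bookkeeping via the equivariance of $\overline\phi$ under $T_C$.
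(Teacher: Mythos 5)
Your proof is correct in outline but takes a genuinely different route from the paper. You split into Nielsen--Thurston cases, directly mirroring the case-by-case definition of $c(\phi,C)$, whereas the paper gives a single, uniform argument: choose a geodesic arc $\gamma$ from $*$ to a point $x \in \partial\overline S\setminus\widetilde C$, sandwich $\phi^N(\gamma)$ between $T_C^{M(N)}(\gamma)$ and $T_C^{M(N)+1}(\gamma)$, translate this to $x+M(N) \le \Theta_C(\phi^N)(x) \le x+M(N)+1$, and invoke Lemma~\ref{lemma:fracDehn} to get $M(N)\le N\,c(\phi,C)\le M(N)+1$; dividing by $N$ and letting $N\to\infty$ identifies both sides of the desired equality. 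The paper's argument is shorter, avoids any explicit appeal to the NT class of $\phi$, and in particular never needs the pseudo-Anosov boundary dynamics that you correctly identify as the delicate step. On the other hand, your case-by-case proof makes the geometric content more transparent: the periodic case is a quick exercise in translation numbers of commuting lifts (your observation that $\tau(\Theta_C(T_{C_j}))=0$ for $j\ne i$ because the lift fixes the endpoints-at-infinity of lifts of $C_j$ is correct and adequate), and the pseudo-Anosov case exhibits $c(\phi,C)=n+j/m$ directly as a rotation number on the circle at infinity via the equivariant shift $\overline\phi(\widetilde p_i)=\widetilde p_{i+j}+n$. Your reducible reduction is also fine, since $\tau$ can be computed from the orbit of any single point. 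The one thing you'd need to flesh out, as you note, is the lifted-isotopy computation in the pseudo-Anosov case (existence and $T_C$-equivariance of the boundary limit points $\widetilde p_i$ of the lifted prongs, and the translation of the winding number $n+j/m$ into the additive shift on $\R$); this is standard but not short, and avoiding it is precisely what the paper's sandwich argument via Lemma~\ref{lemma:fracDehn} buys.
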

\begin{proof}
Let us take a geodesic $\widetilde{\gamma}$ in $\widetilde{S} \subset \mathbb{H}^{2}$ which joins $\widetilde{*}$ and $x \in \partial \overline{S} \setminus \widetilde{C} = \R$.  
Denote $\gamma = \pi(\widetilde{\gamma})$. 
For $N >0$ there exists an integer $M(N)$ such that
\[ T_{C}^{M(N)} (\gamma) \geq \phi^{N}(\gamma) \geq T_{C}^{M(N)+1}(\gamma). \]
This is equivalent to 
\[ \Theta_{C}(T_{C}^{M(N)})(x) \leq \Theta_{C}(\phi^{N})(x) \leq \Theta_{C}(T_{C}^{M(N)+1})(x). \]
Recall that $\Theta(T_{C})$ translates $x \mapsto x+1$, hence 
\[ x+M(N) \leq  \Theta_{C}(\phi^{N})(x) \leq x+ M(N)+ 1, \]
i.e., 
\[ \frac{M(N)}{N} \leq \frac{ \Theta_{C}(\phi^{N})(x) -x}{N} \leq \frac{M(N)+1}{N}. \]
By Theorem \ref{theorem:computation} as $N\to\infty$ both $\frac{M(N)}{N}$ and $\frac{M(N)+1}{N}$ converge to $c(\phi,C)$ and the middle term converges to $\tau(\Theta_{C}(\phi))$, so we obtain $c(\phi,C) = \tau(\Theta_{C}(\phi)).$
\end{proof}

Since the translation number $\tau: \widetilde{\textrm{Homeo}}^{+} (S^{1}) \rightarrow \R$ is a homogeneous quasi-morphism of defect $1$, we get the following.

\begin{corollary}
The fractional Dehn twist coefficient with respect to $C$ defines a homogeneous quasi-morphism 
\[ c(\cdot, C): \Aut(S,\partial S) \rightarrow \Q  \]
of defect $1$. That is,
\[ | c(\phi \psi, C) - c(\phi,C) - c(\psi,C)| \leq 1 \]
and
\[ c(\phi^{N},C) = N c(\phi,C) \]
hold for all $\phi,\psi \in \Aut(S,\partial S)$ and $N \in \Z$.
\end{corollary}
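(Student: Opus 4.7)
The plan is to deduce the corollary as an essentially formal consequence of Theorem~\ref{theorem:translation}, which identifies $c(\cdot, C)$ with the composition $\tau \circ \Theta_C$ of two maps whose relevant algebraic properties are already on the table. So the proof is a transport-of-structure argument: the Nielsen--Thurston homomorphism carries multiplication in $\Aut(S,\partial S)$ to composition in $\widetilde{\textrm{Homeo}}^+(S^1)$, and then the classical Poincar\'e translation number $\tau$ supplies both the homogeneity and the defect bound.

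First, I would record that $\Theta_C: \MCG(S) \to \widetilde{\textrm{Homeo}}^+(S^1)$ is a group homomorphism, so $\Theta_C(\phi\psi) = \Theta_C(\phi) \circ \Theta_C(\psi)$ and $\Theta_C(\phi^N) = \Theta_C(\phi)^N$ for every $\phi,\psi \in \Aut(S,\partial S)$ and $N \in \Z$. Next I would recall the two standard facts about the translation number $\tau$ on $\widetilde{\textrm{Homeo}}^+(S^1)$: homogeneity $\tau(h^N) = N\tau(h)$, which follows from the definition by passing to a subsequence in the limit, and the defect bound $|\tau(hg) - \tau(h) - \tau(g)| \leq 1$, which follows from the fact that every element of $\widetilde{\textrm{Homeo}}^+(S^1)$ commutes with the unit translation $x \mapsto x+1$, forcing $\lfloor \tau(h)\rfloor + \lfloor \tau(g)\rfloor \leq \tau(hg) \leq \lceil \tau(h)\rceil + \lceil \tau(g)\rceil$ after a short estimate. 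I would just cite these; they are textbook material on the Poincar\'e rotation number.

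Combining the two ingredients via Theorem~\ref{theorem:translation},
\[ c(\phi^N, C) = \tau(\Theta_C(\phi^N)) = \tau(\Theta_C(\phi)^N) = N \tau(\Theta_C(\phi)) = N c(\phi, C), \]
which also re-derives Proposition~\ref{prop:property_of_c}(1), and
\[ | c(\phi\psi, C) - c(\phi, C) - c(\psi, C)| = | \tau(\Theta_C(\phi)\Theta_C(\psi)) - \tau(\Theta_C(\phi)) - \tau(\Theta_C(\psi))| \leq 1. \]
Rationality of $c(\phi, C)$ is already guaranteed by Proposition~\ref{prop:image_c}, so the target indeed lies in $\Q$.

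There is no real obstacle: the only point that requires any care is that the classical facts about $\tau$ are formulated for $\widetilde{\textrm{Homeo}}^+(S^1)$, and we must be sure that $\Theta_C(\phi)$ genuinely lands there (commutes with $x\mapsto x+1$). This is already built into the construction of $\Theta_C$ via the identification of $\partial \overline{S}\setminus \widetilde C$ with $\R$ so that $\overline{T_C}$ acts as $x\mapsto x+1$ and the observation that $f \circ T_C = T_C \circ f$ for $f \in \Aut(S,\partial S)$, both of which are in the paragraph preceding Theorem~\ref{theorem:translation}. Thus the proof reduces to three lines of algebra with no further work required.
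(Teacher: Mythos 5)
Your proposal is correct and is exactly the paper's argument: the authors derive the corollary in one line from Theorem~\ref{theorem:translation} by noting that $\Theta_C$ is a homomorphism and that the translation number $\tau$ is a homogeneous quasi-morphism of defect $1$, which is precisely the transport-of-structure you spell out. The extra details you supply (the textbook facts about $\tau$, the check that $\Theta_C(\phi)$ commutes with $x\mapsto x+1$, and rationality via Proposition~\ref{prop:image_c}) are all consistent with what the paper leaves implicit.
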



\section{Estimates of fractional Dehn twist coefficient from open book foliation}
\label{sec:estimateFDTC}
Section~\ref{sec5.1} is devoted to estimates of the fractional Dehn twist coefficient of a monodromy. In Section~\ref{sec:estimate_braid} we extend the results to the FDTC for braids. 

Throughout this section, $L$ is a closed braid (possibly empty) in $(S, \phi)$ and $F$ denotes an oriented, connected compact surface which is either a Seifert surface of $L$ or a surface in $M_{(S,\phi)}-L$, admitting an open book foliation $\F(F)$.

\subsection{Estimates of $c(\phi, C)$}\label{sec5.1}

To estimate the FDTC of a monodromy the notion of strong essentiality (Definition~\ref{def of essential}) plays an important role.
Lemma~\ref{lemma:estimate}, a special case of Theorem~\ref{theorem:estimate}, gives a simple but still useful estimate. The proof shows  how the FDTC and an open book foliation are related. 

\begin{lemma}
\label{lemma:estimate}
Let $v$ be an elliptic point of $\F(F)$ lying on a binding component $C \subset \partial S$. 
Assume that $v$ is strongly essential and there are no a-arcs starting from $v$.
Let $p$ (resp. $n$) be the number of positive (resp. negative) hyperbolic points that are joined with $v$ by a singular leaf. 
\begin{enumerate}
\item If $\sgn(v)= +1$ then $-n \leq c(\phi,C) \leq p.$
\item If $\sgn(v) = -1$ then $-p \leq c(\phi,C) \leq n.$
\end{enumerate} 
\end{lemma}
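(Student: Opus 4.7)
The plan is to apply the key lemma (Lemma~\ref{lemma:fracDehn}) to a b-arc ending at $v$. Since $v$ is strongly essential, any b-arc $\gamma$ ending at $v$ is essential in $S$ itself (not merely in $S \setminus L$) and starts on $C \subset \partial S$. Choose a regular page $S_{t_0}$ in which such a $\gamma$ appears, and trace the family of b-arcs at $v$ as the page parameter sweeps from $t_0$ to $t_0 + 1$. After one full turn the terminal b-arc, viewed in $S_{t_0}$ via the monodromy identification, is precisely $\phi(\gamma)$. Between hyperbolic events the b-arcs at $v$ vary by an isotopy of the page fixing $\partial S$, so nothing changes at the mapping-class level; all genuine change happens at the $p + n$ parameter values at which $v$ is joined to a hyperbolic point by a singular leaf.

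The key technical input, which I expect to be the main obstacle, is a local estimate at each such crossing: if $\gamma^-$ and $\gamma^+$ are the b-arcs at $v$ immediately before and after the crossing of a hyperbolic point $h$ joined to $v$, then either $\gamma^- \geq \gamma^+ \geq T_C(\gamma^-)$ or $T_C^{-1}(\gamma^-) \geq \gamma^+ \geq \gamma^-$, with the direction governed by the product $\sgn(v)\cdot\sgn(h)$. The geometric picture is that $\sgn(v)$ fixes the sense in which the radial family of b-arcs at $v$ sweeps with $t$, while $\sgn(h)$ determines whether traversing $h$ pushes $\gamma$ prograde (a right-shift) or retrograde (a left-shift) with respect to that sweep, and one such shift never exceeds a single Dehn twist about $C$. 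I would verify this by direct inspection of the description arc of $h$ in each region type of Proposition~\ref{prop:region} that can be incident to $v$; the hypothesis that $v$ has no a-arcs around it restricts attention to regions whose leaves at $v$ are b-arcs ($ab$-, $bb$-, and $bc$-type incidences), which keeps the case analysis elementary.

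Concatenating the $p + n$ local estimates along the one-parameter family yields a global bound. When $\sgn(v) = +1$, positive hyperbolic points produce rightward shifts and negative ones leftward shifts, so
\[ T_C^{-n}(\gamma) \;\geq\; \phi(\gamma) \;\geq\; T_C^{p}(\gamma), \]
and Lemma~\ref{lemma:fracDehn} delivers $-n \leq c(\phi, C) \leq p$. When $\sgn(v) = -1$ the product $\sgn(v)\sgn(h)$ flips sign, so the roles of positive and negative hyperbolic points interchange and the same argument produces $-p \leq c(\phi, C) \leq n$.
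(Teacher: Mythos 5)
Your plan is the paper's plan: track a b-arc at $v$ around the fibration, bound the local change at each hyperbolic crossing, concatenate, and feed the result into Lemma~\ref{lemma:fracDehn}. Two things need attention before this becomes a proof.

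The step you flag as ``the main obstacle'' is indeed where all the content lives, and it is not proved by your proposal. The bound ``one crossing shifts the b-arc by less than one Dehn twist about $C$'' does not follow merely from looking at which region types are incident to $v$. The paper establishes it by two observations that your sketch does not isolate: (a) the describing arc of the hyperbolic point has interior disjoint from the leaves of the pre-crossing page, so after sliding one endpoint along $b_{t_i-\e}$ to $v$ the post-crossing b-arc $b_{t_i+\e}$ can be taken disjoint (in its interior) from $b_{t_i-\e}$; and (b) $b_{t_i-\e}$ is \emph{strongly essential}, so $b_{t_i-\e} \neq T_C(b_{t_i-\e})$. Only the conjunction of (a) and (b) yields, say, $b_{t_i-\e} > b_{t_i+\e} > T_C(b_{t_i-\e})$. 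Note that strong essentiality is therefore used twice in the argument --- once here for the local bound, and once (as you do note) to make $b_0$ eligible for Lemma~\ref{lemma:fracDehn}. A sketch that does not surface (a) and (b) leaves the lemma unproved.

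Separately, your sign bookkeeping is inverted relative to the paper's conventions in two places that happen to cancel. With $M_\phi = S\times[0,1]/(x,1)\sim(\phi(x),0)$ the terminal b-arc is $b_1=\phi^{-1}(b_0)$, not $\phi(\gamma)$; and the paper shows that for $\sgn(v)=-1$, $\sgn(h)=+1$ the crossing moves the b-arc to the \emph{right} ($b_0>b_{t_1+\e}$), which under the orientation flip means that for $\sgn(v)=+1$, $\sgn(h)=+1$ the move is to the \emph{left}, opposite to what you assert. Because the lemma is then applied to $\phi^{-1}$ and Proposition~\ref{prop:property_of_c}-(1) is used to pass back to $\phi$, the two reversals cancel and your final inequalities match; but a careful write-up should fix one of the two.
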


\begin{proof}
We prove the case $\sgn(v)=-1$. (Similar arguments hold for the positive case.) Note that when $\sgn(v)=-1$ any regular leaf that ends at $v$ is a b-arc, so the assumption that there are no a-arcs starting from $v$ is automatically satisfied.

Let $h_{1},\ldots,h_{n+p}$ be the hyperbolic points connected to $v$ by a singular leaf, and $S_{t_{i}}$ be the page that contains $h_{i}$. Without loss of generality we may assume $0<t_{1}< \cdots < t_{n+p} < 1$.
For $t \neq t_1, \ldots, t_{n+p}$ let $b_{t}$ denote the $b$-arc in $S_{t}$ that ends at $v$. 
Since $v$ is strongly essential, the b-arc $b_t$ is strongly essential.

Let $l = p+n$. By induction on $l$ we prove 
\begin{equation}\label{induction}
T_C^{-n} (b_0) > b_{t_l + \e} > T_C^p (b_0).
\end{equation}

($l=1$ case)
Suppose that $\sgn(h_1)=+1$.
Let $\gamma \subset S_{0}\setminus (S_{0} \cap F)$ be a describing arc for $h_{1}$.
At least one of the endpoints of $\gamma$ lies on $b_0$, which we call $v'$ (if the both endpoints lie on $b_0$, pick the one closer to $v$). 
We isotope $\gamma$ in $S_0 \setminus (S_0 \cap F)$ by sliding $v'$ along $b_0$ until it reaches $v$. 
See the left sketch in Figure \ref{fig:estimate0}.
Since $\sgn(v)=-1$ a positive normal $\vec v_F$ of $b_0$ near $C$ is pointing the  opposite direction to the orientation of $C$. Thus Remark~\ref{remark sign describing arc} and $\sgn(h_1)=+1$ imply that $\gamma$ lies strictly on the right side of $b_0$ near $v$.  
Hence 
$$b_0 > \gamma.$$
Since the interiors of $\gamma$ and $b_0$ are disjoint and $b_0$ is strongly essential, $\gamma$ lies strictly on the left side of $T_{C}(b_0)$ near $v$, that is, 
$$\gamma > T_C(b_0).$$
After passing the critical time $t=t_1$ we may identify $b_{t_1+\e}$  with $\gamma$ for a sufficiently small $\e>0$ locally near $v$. 
Also any two of the three arcs $b_{0}$, $b_{t_1+\e}$ and $T_{C}(b_0)$ realize the minimal geometric intersection. 
Hence we get 
\begin{equation}\label{l=1}
b_0 > b_{t_1+\e} > T_C(b_0).
\end{equation}
Similarly, if $\sgn(h_{1})=-1$ we obtain 
$$T_{C}^{-1}(b_{0}) > b_{t_{1}+\e} > b_0.$$
\begin{figure}[htbp]
 \begin{center}
 \SetLabels
(0*0.9) $t=0$\\ 
(0*.8) $(v = v')$\\
(0.6*0.9) $t=t_1 + \e$\\ 
(0.2*0.13)   $\gamma$\\
(0.36*0.54)  $b_{0}$\\
(.97*0.38)  $b_{0}$\\
(0.24*.47)   $v$\\
(0.83*.47)   $v$\\
(1*.9)   $b_{t_{1}+\e}$\\
(1*.6) $T_C(b_0)$\\
(.23*.7) $C$\\
\endSetLabels
\strut\AffixLabels{\includegraphics*[scale=0.5, width=70mm]{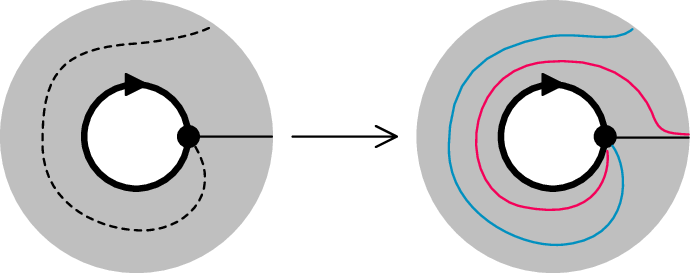}}
   \caption{Before and after the hyperbolic point $h_1$.}
 \label{fig:estimate0}
  \end{center}
\end{figure}

Assume that (\ref{induction}) holds when $l=k=p+n$. 
Let $l = k+1$. 
If $\sgn(h_{k+1})=+1$ by the above argument (\ref{l=1}) we have
$$ b_{t_{k+1} - \e} >  b_{t_{k+1} + \e} > T_C(b_{t_{k+1} - \e}).$$
Therefore, by the induction hypothesis (\ref{induction}) we have:
$$T_C^{-n} (b_0) > b_{t_k + \e} = b_{t_{k+1} - \e} >  b_{t_{k+1} + \e} > T_C(b_{t_{k+1} - \e})=T_C(b_{t_k + \e}) > T_C^{p+1} (b_0).$$
Similarly 
\[T_C^{-(n+1)} (b_0) > b_{t_{k+1} + \e} > T_C^p (b_0) \ \mbox{ if } \sgn(h_{k+1})=-1. \]
This concludes the statement (\ref{induction}).

Finally knowing that $b_{t_{p+k}+\e} = b_1$ near the vertex $v$
\[ T_{C}^{-n} (b_{0}) > b_{1} = \phi^{-1}( b_{0}) > T_{C}^{p}(b_{0}).\]
Since the elliptic point $v$ is strongly essential, the b-arc $b_0$ is strongly essential, i.e., $b_0$ is an essential arc in $S$. 
Lemma~\ref{lemma:fracDehn} implies $-n \leq c(\phi^{-1}, C)\leq p$. With Proposition~\ref{prop:property_of_c}-(1) we obtain the desired estimate.
\end{proof}

\begin{remark}
In \cite{i1}, \cite{i2} the first-named author used similar arguments to relate the valence of a vertex in the braid foliation (which corresponds to the number of hyperbolic singular points connected to an elliptic points by a singular leaf) and the Dehornoy floor, an integer-valued complexity of braids defined by the Dehornoy ordering of the braid groups which roughly corresponds to the absolute value of the fractional Dehn twist coefficient.
Lemma \ref{lemma:estimate} can be seen as a generalization of the arguments in \cite{i1} and \cite{i2}. 
\end{remark}

Below is an immediate consequence of Lemma~\ref{lemma:estimate}.

\begin{corollary}
\label{theorem:estimatesimple}
Let $v_1, \ldots, v_n$ be the strongly essential elliptic points on the same binding component $C$ of the open book $(S, \phi)$ such that all the regular leaves ending on $v_i$ are b-arcs. 
Let $p_i$ (resp. $n_i$) be the number of positive (resp. negative) hyperbolic points connected to $v_i$ by a singular leaf. 
Define the {\em upper bound} function 
$$U(v_i) = \left\{
\begin{array}{lll}
p_i & \mbox{ if } & \sgn(v_i)=+1, \\
n_i & \mbox{ if } & \sgn(v_i)=-1, \\
\end{array}
\right.$$
and the {\em lower bound} function 
$$L(v_i) = \left\{
\begin{array}{lll}
-n_i & \mbox{ if } & \sgn(v_i)=+1, \\
-p_i & \mbox{ if } & \sgn(v_i)=-1. \\
\end{array}
\right.$$
Then the fractional Dehn twist coefficient has
\begin{equation}\label{estimateA}
\max_{i=1,\ldots,n} L(v_i) \leq c(\phi, C) \leq \min_{i=1,\ldots,n} U(v_i).
\end{equation}
\end{corollary}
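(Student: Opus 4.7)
The plan is to observe that the corollary follows by applying Lemma~\ref{lemma:estimate} to each elliptic point $v_i$ independently and then intersecting the resulting intervals. Since each $v_i$ is assumed to be strongly essential and all regular leaves meeting $v_i$ are b-arcs, the hypotheses of Lemma~\ref{lemma:estimate} are verified for $v_i$ as an individual elliptic point on $C$.

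First I would fix an index $i \in \{1, \ldots, n\}$ and unpack what Lemma~\ref{lemma:estimate} yields for $v_i$. If $\sgn(v_i) = +1$, the lemma gives $-n_i \leq c(\phi, C) \leq p_i$, which matches $L(v_i) = -n_i$ and $U(v_i) = p_i$. If $\sgn(v_i) = -1$, the lemma gives $-p_i \leq c(\phi, C) \leq n_i$, which matches $L(v_i) = -p_i$ and $U(v_i) = n_i$. In either case we obtain the uniform statement
\[ L(v_i) \leq c(\phi, C) \leq U(v_i). \]

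Since the fractional Dehn twist coefficient $c(\phi, C)$ is a single rational number that depends only on $(\phi, C)$ and not on the choice of elliptic point, all $n$ such inequalities hold simultaneously. Taking the maximum over $i$ of the lower bounds and the minimum over $i$ of the upper bounds yields the desired double inequality~(\ref{estimateA}).

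There is essentially no obstacle here: the corollary is a purely logical consequence of applying Lemma~\ref{lemma:estimate} pointwise, combined with the trivial observation that a family of lower (resp.\ upper) bounds on the same quantity can be replaced by their maximum (resp.\ minimum). The only thing to verify carefully is the bookkeeping that $L$ and $U$, as defined in the statement, coincide with the two cases of the estimate in Lemma~\ref{lemma:estimate}, which is immediate from inspection.
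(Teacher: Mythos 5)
Your proof is correct and is exactly the argument the paper intends: the paper introduces this corollary with the phrase ``Below is an immediate consequence of Lemma~\ref{lemma:estimate}'' and gives no further proof, so applying that lemma to each $v_i$ and taking the max of the lower bounds and min of the upper bounds is precisely what is meant.
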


Lemma~\ref{lemma:estimate} and Corollary~\ref{theorem:estimatesimple} require only local information of an open book foliation, namely, the number of hyperbolic points connected to a {\em single} strongly essential elliptic point by a singular leaf.
In Theorem~\ref{theorem:estimate} below we examine {\em several} strongly essential elliptic points and obtain a sharper estimate of $c(\phi, C)$.

Let $\lceil x \rceil \in \Z$ be the {\em ceiling} of $x \in \R$, that is, the smallest integer grater than or equal to $x$.

\begin{theorem}
\label{theorem:estimate}
Let $v_{1},\ldots,v_{n} \in \F(F)$ be strongly essential elliptic points. 
Assume that all of $v_{1},\ldots,v_{n}$ lie on the same binding component $C$ and that all the regular leaves ending at $v_i$ are b-arcs. 
Let $N$ (resp. $P$) be the total number of negative (resp. positive) hyperbolic points that are connected to at least one of $v_{1},\ldots,v_{n}$ by a singular leaf.
Let $f_{\pm} :\mathbb{N} \rightarrow \Q$ be a map defined by
\[ f_{-}(m) = \left\{
\begin{array}{ll}
 \frac{1}{m} \left\lceil  \frac{Nm}{n} -\frac{(n-1)^{2}}{4 n^{2}} \right\rceil
& ( n: \textrm{odd}) \\
& \\ 
\frac{1}{m} \left\lceil \frac{Nm}{n} -\frac{n-2}{4n}  \right\rceil
& ( n: \textrm{even})
\end{array}
\right.
\]
and
\[ f_{+}(m) = \left\{ 
\begin{array}{ll}
\frac{1}{m} \left\lceil \frac{Pm}{n} - \frac{(n-1)^{2}}{4n^{2}} \right\rceil
& ( n: \textrm{odd}) \\
& \\ 
\frac{1}{m} \left\lceil \frac{Pm}{n} -\frac{n-2}{4n} \right\rceil
& ( n: \textrm{even}).
\end{array}
\right.
\]
\begin{enumerate}
\item If $\sgn(v_{1})=\sgn(v_{2}) = \cdots = \sgn(v_{n}) = -1$, then
\begin{equation}\label{estimateB}
-\inf_{m \in \mathbb{N}} f_{+}(m)\leq c(\phi,C) \leq \inf_{m \in \mathbb{N}} f_{-}(m). 
\end{equation}
\item If $\sgn(v_{1})=\sgn(v_{2}) = \cdots = \sgn(v_{n}) = +1,$ then
\begin{equation}\label{estimateC}
-\inf_{m \in \mathbb{N}} f_{-}(m) \leq c(\phi,C) \leq  \inf_{m \in \mathbb{N}} f_{+}(m).
\end{equation}
\end{enumerate}
\end{theorem}

\begin{remark}
Lemma \ref{lemma:estimate} is a special case ($n=1$ case) of Theorem \ref{theorem:estimate}.   
If $n \geq 2$ it depends on the embedding of $F$ which estimate among (\ref{estimateA}), (\ref{estimateB}) or (\ref{estimateC}) is the sharpest.
\end{remark}

\begin{proof}
We show the upper bound of $c(\phi,C)$ in (\ref{estimateB}).
The rest of the bounds can be obtained similarly.

We construct the $m$-fold cyclic branched covers of the ambient manifold $M_{(S, \phi)}$ and the surface $F$ branched at the binding:

Let $A = F \cap S_{0}$ be the multi-curve on $S=S_{0}$.
We cut $F \subset M$ along $A$ to get a properly embedded oriented surface $\Sigma$ in $\overline{M \setminus S_0} \simeq (S \times [0,1])/_{\sim_\partial}$ where ``$\sim_\partial$'' is an equivalence relation $(x, t) \sim_\partial (x, 0)$ for  $x \in \partial S$ and $t \in [0,1]$. 
We orient $A$ so that 
$$\Sigma \cap S_{0} = -A, \qquad \Sigma \cap S_{1} = \phi^{-1}(A).$$ 
Fix an integer $m\geq 1$. For $i=0,\ldots,m-1$, let 
\[ \Phi_i: (S \times [0,1])/_{\sim_\partial} \rightarrow (S\times[i,i+1])/_{\sim_\partial} \]
be a map defined by 
\[ \Phi_i(x,t) =(\phi^{-i}(x),t+i).\]
Let 
\[ \Sigma_{m} = \Sigma \cup \Phi_1(\Sigma) \cup \cdots \cup \Phi_{m-1}(\Sigma) \ \subset (S \times [0,m])/_{\sim_\partial}, \]
a properly embedded surface. 
Consider a natural quotient map $$\pi_m: (S \times [0,m])/_{\sim_\partial} \to M_{(S, \phi^m)}$$ identifying $(x, m)$ with $(\phi^m(x), 0)$ for $x \in \Int(S)$. 
Note that $$\Sigma_m \cap S_{0} = -A \ \mbox{ and }\  \Sigma_m \cap S_m = \phi^{-m}(A).$$
Hence we obtain a surface $F_{m}:=\pi_m(\Sigma_{m}) \subset M_{(S,\phi^{m})}$.

The manifold $M_{(S,\phi^{m})}$ is the cyclic $m$-fold branched cover of $M_{(S,\phi)}$ with branch locus the binding of the open book.
Likewise the surface $F_m$ is the cyclic $m$-fold branched cover of $F$.

Recall that $v_1,\dots,v_n$ are strongly essential negative elliptic points lying on the binding component $C$. 
Note that $v_1,\dots,v_n$ are subset of the branch points of $F$. 
By abuse of notation let $v_{1},\ldots, v_{n} \in F_m$ denote the lifts of $v_{1},\ldots,v_{n} \in F$. 
By the construction, the lifts $v_{1},\ldots, v_{n}$  are also strongly essential negative elliptic points in $\F(F_m)$ and connected to $Nm$ negative hyperbolic points, $h_{1},\ldots,h_{Nm}$, by a singular leaf. 
We assume that $h_{k}$ lies on the page $S_{t_{k}}$ where 
$
0<t_{1}<t_{2} < \cdots < t_{N} < 1
$
and
$
t_{lN + j} =l + t_j$ for $l=1,\dots,n-1$ and $j=1,\dots,N.
$
For $t \neq t_1,\dots,t_{Nm}$ we denote the b-arc in $S_{t}$ that ends at $v_i$ by $b^{i}_{t}$. 
As in the proof of Lemma~\ref{lemma:estimate}, we compute the upper bound of $c(\phi^{m},C)$ by comparing $b^{i}_{0}$ and $b^{i}_{m} = \phi^{-m}(b^{i}_{0})$. 
To this end we introduce the {\em twisting} of $b^{i}_{t}$, $tw(b^{i}_{t})$, and the {\em total twisting} on $S_{t}$ along $C$, $TW_t^C$:

Consider the projection 
$\mathcal P: (S \times [0,m])/\sim_\partial \ \to S$ defined by $\mathcal P(x, t) = x$.
Below, for the sake of simplicity we freely identify an arc in a page $S_t$ and its image under $\mathcal P$. 

Let $\mathbf{b}_{0}= b_0^1 \cup \cdots \cup b_0^n$ be a multi-curve in the page $S_0$. 
Similarly put  $\mathbf{b}_{t}= b_t^1 \cup \cdots \cup b_t^n$.
Isotope the b-arc $b_t^i$ and the multi-curve $\mathbf{b}_{0}$ so that they realize the minimal geometric intersection number, $\mathsf i_S (b_t^i, \ \mathbf{b}_{0})= \mathsf i_S (\mathcal P(b_t^i), \ \mathcal P(\mathbf{b}_{0}))$.

Let $\mathcal{C}: S \rightarrow S/C$ be the quotient map that collapses the boundary component $C$ to a point. 
Let $\mathsf i_{S/C} (\mathcal C(b_t^i), \ \mathcal C(\mathbf{b}_{0}))$ denote the minimum geometric intersection number of the images $\mathcal{C}(b_{t}^{i})$ and $\mathcal{C}(\mathbf{b}_{0})$. 
We define the {\em twisting} of $b^{i}_{t}$ as the difference between the two geometric intersection numbers: 
\[ 
tw(b^{i}_{t}) := \left\{ 
\begin{array}{ll} 
0 & \textrm{ if } b^{i}_{0} \geq   b^{i}_{t} \textrm{ near } v_i,\\
\mathsf i_{S}( b_t^i, \ \mathbf{b}_{0})
- 
\mathsf i_{S/C} ( \mathcal{C}(b_t^i) , \ \mathcal{C}(\mathbf{b}_{0}) ) +1
& \textrm{ if } b^{i}_{t} > b^{i}_{0} \textrm{ near } v_i.\\
\end{array} \right.
\]

Let us call the intersection points of $b_t^i$ and $\mathbf{b}_{0}$ that vanish after collapsing the boundary component  $C$ \emph{boundary intersection points near $C$}. The definition shows that $tw(b_t^{i})$ is equal to the number of boundary intersection points near $C$ plus one. The ``$+1$'' term corresponds to the point $v_i$, which is regarded as a boundary intersection point near $C$.

We may put $b_t^i$ and $\mathbf{b}_{0}$ by isotopy so that all the boundary intersection points near $C$ are contained in a small annular neighborhood $\nu(C)$ of $C$. See Figure \ref{fig:twistingamount}, where the gray regions represent $\nu(C)$ and the dashed lines represent  the multi-curve $\mathbf{b}_{0} \cap \nu(C)$ for $n=6$.
\begin{figure}[htbp]
 \begin{center}
 \SetLabels
(0.05*0.93) $S_{t}$ \\
(0.62*0.93) $S_{t}$ \\
(0.26*0.8)  $b^{i}_{0}$ \\
(0.26*0.2)  $b^{j}_{0}$ \\
(0.15*0.7) $b^{i}_{t}$ \\
(0.23*0.58) $v_{i}$\\
(.23*.37) $v_j$\\
(0.76*.56)  $C$\\
(0*0.1) $tw(b^{i}_{t})=3$\\
(1*0.1) $TW^C_t=11$\\
\endSetLabels
\strut\AffixLabels{\includegraphics*[scale=0.5, width=90mm]{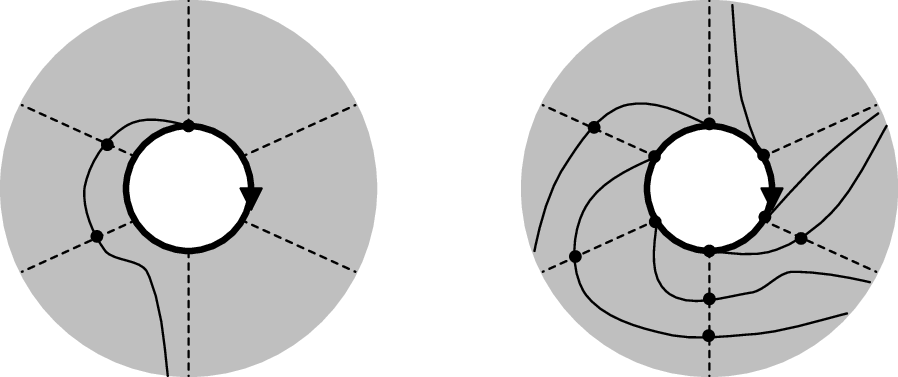}}
 \caption{The twisting $tw(b^{i}_{t})$ and the total twisting $TW^C_t$.}
 \label{fig:twistingamount}
  \end{center}
\end{figure}

Now we consider the case $t=m$. 
Let $\omega_i := \left\lceil \frac{tw(b^{i}_m)}{n} \right \rceil \geq 0$. 
From the definition of $tw(b^{i}_m)$, in $\nu(C)$ the b-arc $b^{i}_m$ winds $\omega_i$ times around $C$ counter-clockwise. Thus  
$$ 
T_{C}^{\omega_i} (b^{i}_m) \geq b_0^i = 
\phi^m(b^{i}_m).
$$

Therefore, Lemma \ref{lemma:fracDehn} implies that
\begin{equation}
\label{eqn:tw}
c(\phi^{m},C) \leq \omega_i = \left\lceil \frac{tw(b^{i}_{m})}{n} \right\rceil \quad \mbox{ for all } i=1,\ldots, n.
\end{equation}

The {\em total twisting} $TW^C_t$ on the fiber $S_t$ along the boundary $C$ is defined by:
\begin{equation}\label{def-of-TW}
TW^C_t := \sum_{i=1}^{n} tw(b^{i}_{t})
\end{equation}
See the right sketch in Figure~\ref{fig:twistingamount}.

As we have seen in the proof of Lemma~ \ref{lemma:estimate}, positive hyperbolic points do not contribute to the total twisting because they turn the curves to the right. So in the next two observations we study the effect of {\em negative} hyperbolic points $h_{1},\ldots,h_{mN}$:

\begin{observation}\label{obs2}
(1): 
For each $j =1,\ldots,mN$, the negative hyperbolic point $h_j$ increases the total twisting by at most $n$. That is, 
$$0 \leq TW^C_{t_{j}  + \e} - TW^C_{t_{j} -\e} \leq n.$$ 
(2):  For each $j = 1, \ldots, \left\lfloor \frac{n}{2} \right\rfloor$ the negative hyperbolic point $h_{j}$ increases the total twisting by at most $2j$.
That is, 
$$0 \leq TW^C_{t_{j}  + \e} - TW^C_{t_{j} -\e} \leq 2j.$$ 
\end{observation}

\begin{proof}
Since we are assuming $\sgn(h_j) = -1$ and $\sgn(v_i)=-1$ we have $$0 \leq TW^C_{t_{j}  + \e} - TW^C_{t_{j} -\e}.$$ 

(1): 
For each $j = 1, \ldots, mN$ we choose a describing arc, $\gamma_j$, of the negative hyperbolic point $h_j$ so that 
\begin{itemize}
\item
$\gamma_j$ and $\mathbf{b}_{0}$ realize the minimum geometric intersection number, and 
\item
all the boundary intersection points of $\gamma_j$ and $\mathbf{b}_{0}$ (the ones that disappear if we collapse $C$ to a point) of $\gamma_j$ and $\mathbf{b}_{0}$ (if they exist) lie in $\nu(C)$. 
\end{itemize}
Then we get an equality: 
\begin{equation}\label{eq:TW}
TW^C_{t_{j}  + \e} - TW^C_{t_{j} -\e} = | \gamma_j \cap \mathbf{b}_{0}\cap \nu(C)|.
\end{equation}

Note that the number of boundary intersection points is at most $n$. 
See Figure~\ref{fig:tot2}, where the points $\gamma_j \cap \mathbf{b}_{0}\cap \nu(C)$ are marked by black dots $\bullet$ and $|\gamma_j \cap \mathbf{b}_{0}\cap \nu(C)|=n=6$. 
\begin{figure}[htbp]
\SetLabels
(.08*1) $S_{t_j-\epsilon}$\\
(.58*1) $S_{t_j+\epsilon}$\\ 
(.3*.8) $\gamma_j$\\
(.3*.1) $\gamma_j$\\
(.25*.45) $C$\\
(.83*.45) $C$\\
\endSetLabels
\strut\AffixLabels{\includegraphics*[scale=0.5, width=100mm]{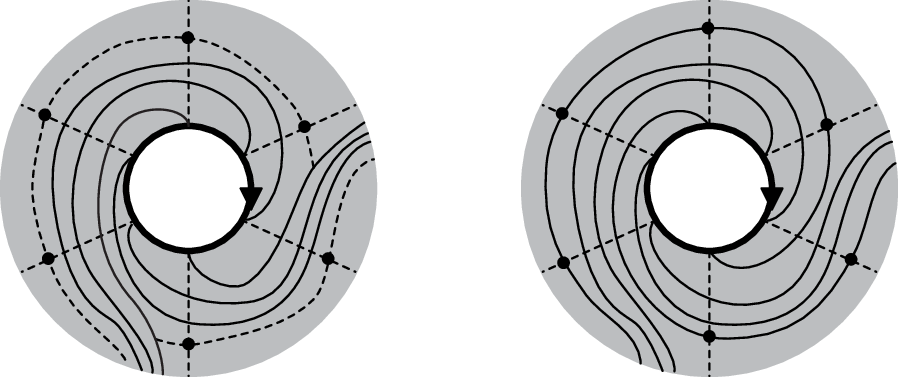}}
\caption{Boundary intersection points in $\nu(C)$, where $n=6$.}
\label{fig:tot2}
\end{figure}
Thus we have $| \gamma_j \cap \mathbf{b}_{0}\cap \nu(C)| \leq n$. This inequality and (\ref{eq:TW}) imply the assertion (1).

(2): 
We show the second assertion. 
Let $j=1$.
Since $\mathbf b_0 = \mathbf b_{t_1-\e}$ in $\nu(C)$ and $\Int(\gamma_1)$ never intersect $\mathbf b_{t_1-\e} = \mathbf b_0$, we have  $| \gamma_1 \cap \mathbf{b}_{0}\cap \nu(C)| \leq 2$. 
The equality ``$=2$'' holds when both of the endpoints of $\gamma_1$ are in $\nu(C)$ as in the left sketch of Figure~\ref{fig:tot1}.

By definition, the describing arc $\gamma_j$, viewed as a curve in $S_{t_j -\e}$, is a properly embedded arc in $S_{t_j -\e} \setminus (S_{t_j -\e} \cap F)$. 
That is, 
$\Int(\gamma_j)$ does not intersect the multi-curve $\mathbf b_{t_j-\e}= \mathbf b_{t_{j-1}+\e}$.
Thus we have 
\begin{equation}\label{eq:+2}
|\gamma_j \cap \mathbf b_0 \cap \nu(C) | \leq |\gamma_{j-1} \cap \mathbf b_0 \cap \nu(C)| + 2
\end{equation}
for $j = 1, \ldots, \left\lfloor \frac{n}{2} \right\rfloor$. 
The equality in (\ref{eq:+2}) is realized when 
\begin{itemize}
\item
$\gamma_j$ and $\gamma_{j-1}$ start from 
consecutive b-arcs in $\mathbf{b}_{0}$ and end at consecutive b-arcs in $\mathbf{b}_{0}$. 
See Figure~\ref{fig:j-1}. 
\item
Both $\gamma_j$ and $\gamma_{j-1}$ turn left in $\nu(C)$ at the endpoints. 

\item
$\mathcal P(\gamma_j)$ and $\mathcal P(\gamma_{j-1})$ are `parallel', $\mathcal P(\gamma_{j-1})$ is closer to the binding $C$ than $\mathcal P(\gamma_j)$,  
and they get out of $\nu(C)$ region from the same sectors (highlighted in light gray in Figure~\ref{fig:j-1}). 
\end{itemize}
By (\ref{eq:TW}) and (\ref{eq:+2}) we obtain $0 \leq TW^C_{t_{j}  + \e} - TW^C_{t_{j} -\e} \leq 2j.$
\begin{figure}[htbp]
\begin{center}
\SetLabels
(.2*1.03) $t=t_{j-1}-\e$\\
(.8*1.03) $t=t_j-\e$\\
(.1*.75) $\gamma_{j-1}$\\
(.43*.4) $\gamma_{j-1}$\\
(.75*.9) $\gamma_j$\\
(.85*.1) $\gamma_j$\\
\endSetLabels
\strut\AffixLabels{\includegraphics*[scale=0.5, width=90mm]{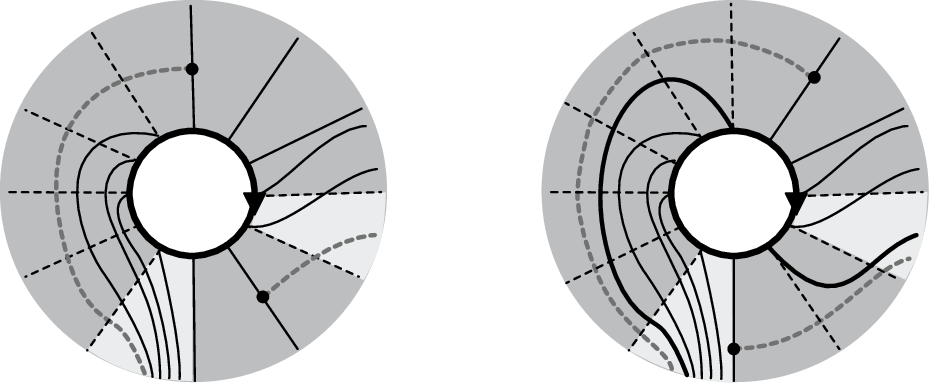}}
 \caption{}
 \label{fig:j-1}
  \end{center}
\end{figure}

In order to understand when the equality $TW^C_{t_{j}  + \e} - TW^C_{t_{j} -\e} = 2j$ holds (in other words the equality holds in (\ref{eq:+2}) for all $k = 1,\dots,j$),  
suppose that the describing arc $\gamma_1$ for $h_1$ joins $b^{i_\circ} \subset {\bf b}_0$ and $b^{i_\bullet}\subset {\bf b}_0$ for some $i_\circ, i_\bullet \in \{1, \ldots, n\}$. 
Then the arc $\gamma_k$ joins $b^{i_\circ+k-1}$ and $b^{i_\bullet+k-1}$ for all $k = 1,\dots,j$ $(\leq\left\lfloor \frac{n}{2} \right\rfloor)$ if and only if the the equality $TW^C_{t_{j}  + \e} - TW^C_{t_{j} -\e} = 2j$ holds. 
For example, 
the middle sketch of Figure~\ref{fig:tot1} shows 
$TW^C_{t_2  + \e} - TW^C_{t_2 -\e} = | \gamma_2 \cap \mathbf{b}_{0}\cap \nu(C)| = 4$. 
\end{proof}
\begin{figure}[htbp]
\begin{center}
\SetLabels
(.05*.4) $\gamma_1$\\
(.2*.45) $\gamma_1$\\ 
(.4*.55) $\gamma_2$\\
(.52*.17) $\gamma_2$\\
(0.15*1.05)  $t = t_1-\e$\\ 
(0.5*1.05)  $t= t_2-\e$\\
(0.85*1.05)  $t=t_2+\e$\\
\endSetLabels
\strut\AffixLabels{\includegraphics*[scale=0.5, width=120mm]{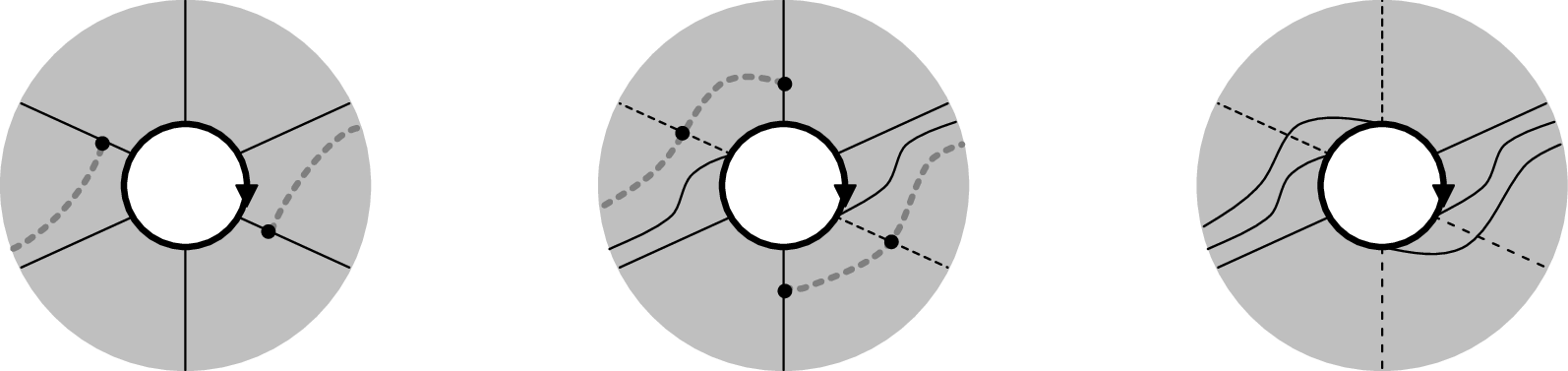}}
\caption{(Observation~\ref{obs2}) 
$TW_{t_1-\e}^C=0$, 
$TW_{t_1+\e}^C=TW_{t_2-\e}^C = 2$ and 
$TW_{t_2+\e}^C =2+4 =6$.
}
\label{fig:tot1}
\end{center}
\end{figure}

From Observation \ref{obs2}, letting $k=
\lfloor \frac{n}{2} \rfloor$ we have:
\begin{eqnarray*}
TW^C_{m} & \leq & 2+4+ \cdots + 2k + (Nm-k)n \\
& = & \left\{ 
\begin{array}{ll}
Nmn - \frac{(n-1)^{2}}{4} & n \textrm{: odd}\\
& \\
Nmn - \frac{n(n-2)}{4} & n \textrm{: even}
\end{array}\right.
\end{eqnarray*}
Thus by (\ref{def-of-TW}) we have:
\[ \min_{i=1,\ldots,n} \left\{ tw(b^{i}_{m}) \right\} \leq \left\{ \begin{array}{ll}
Nm - \frac{(n-1)^{2}}{4n} &n \textrm{: odd}\\
& \\
Nm - \frac{(n-2)}{4} &n \textrm{: even}
\end{array} \right.\]
By (\ref{eqn:tw}) we have for all $m\in\mathbb N$:
\[
m \cdot c(\phi,C)= c(\phi^{m},C) \leq 
\left\{ 
\begin{array}{cl} 
\left\lceil \frac{Nm}{n} - \frac{(1-n)^{2}}{4n^{2}} \right\rceil & n \textrm{: odd}\\
& \\
\left\lceil \frac{Nm}{n} - \frac{n-2}{4n}  \right\rceil  & n \textrm{: even}
\end{array}
\right.
\]
\end{proof}

\subsection{Estimates of $c(\phi, L, C)$} 
\label{sec:estimate_braid}

Now we apply the results in Section~\ref{sec5.1} to obtain estimates of the fractional Dehn twist coefficients for closed braids, $c(\phi, L, C)$. 

Let $L$ be an $n$-stranded closed braid with respect to the open book $(S,\phi)$.   
Let $F$ be either a Seifert surface of $L$ or a closed surface in the complement of $L$.
Suppose that $\F(F)$ is essential. 
Recall that $M_{(S,\phi)}\setminus(L \cup B)\to S^{1}$ is a fibration with the fiber $S'=S\setminus\{n \textrm{ points}\}$ and the monodromy $i(\beta_L) \circ \phi \in \MCG(S, \{x_,\dots,x_n\})$ as discussed in Definition~\ref{def of FDTC braid}. 
Observe that every a-arc of $\F(F)$ is an essential arc in $S'$ and every essential b-arc of $\F(F)$ is an essential arc in $S'$. 
Because of this, there is a difference between estimates of $c(\phi,C)$ and those of $c(\phi, L,C)$. The former requires b-arcs to be strongly essential, while the latter requires just essentiality.

The following are variations of Lemma~\ref{lemma:estimate} and Theorem~\ref{theorem:estimate} with weaker assumptions.

\begin{lemma}
\label{lemma:estimate-braid}
Let $v$ be an elliptic point of $\F(F)$ lying on a binding component $C \subset \partial S$. 
Assume that $v$ is essential.
Let $p$ (resp. $n$) be the number of positive (resp. negative) hyperbolic points that are joined with $v$ by a singular leaf. 
\begin{enumerate}
\item If $\sgn(v)= +1$ then $-n \leq c(\phi, L,C) \leq p.$
\item If $\sgn(v) = -1$ then $-p \leq c(\phi, L,C) \leq n.$
\end{enumerate} 
\end{lemma}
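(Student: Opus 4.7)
The plan is to adapt the proof of Lemma~\ref{lemma:estimate} to the punctured fiber $S' = S \setminus (S \cap L)$ of the fibration $M_{(S,\phi)} \setminus (L \cup B) \to S^1$. By the essentiality hypothesis on $v$, every b-arc at $v$ is essential in $S_t \setminus (S_t \cap L)$, hence essential in $S'$, which is precisely what the key lemma (Lemma~\ref{lemma:fracDehn}) requires when applied in $\Aut(S', \partial S')$. The describing-arc argument used in the proof of Lemma~\ref{lemma:estimate} is purely local near the boundary component $C$; the presence of the punctures $S_t \cap L$ in the interior of the page does not affect it, so a positive (resp.\ negative) hyperbolic point around $v$ still shifts the b-arc $b_t$ by at most one factor of $T_C$ to the right (resp.\ left) as an arc in $S'$.

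Treating the case $\sgn(v) = -1$ (the $\sgn(v) = +1$ case is symmetric), I would first reproduce the single-cycle estimate of Lemma~\ref{lemma:estimate} verbatim in $S'$ to obtain
\[ T_C^{-n}(b_0)\ >\ \phi_L^{-1}(b_0)\ >\ T_C^{p}(b_0), \]
where $b_0$ is the b-arc of $\F(F)$ at $v$ in $S_0$, now viewed as an essential arc in $S'$. The one new issue, absent from Lemma~\ref{lemma:estimate}, is that $\phi_L$ permutes the $n$ punctures of $S'$, so in general $\phi_L \notin \Aut(S', \partial S')$ and Lemma~\ref{lemma:fracDehn} cannot be applied to $\phi_L$ directly. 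To remedy this, choose $m \geq 1$ with $\phi_L^m \in \Aut(S', \partial S')$ and iterate the above inequality $m$ times. Using that $T_C$ commutes with $\phi_L$ (since $T_C$ is supported in a collar of $C$ disjoint from $L$) and that $T_C^{\pm 1}$ preserves the strict-right-of ordering at the basepoint on $C$, an induction on $k \leq m$ gives
\[ T_C^{-kn}(b_0)\ >\ \phi_L^{-k}(b_0)\ >\ T_C^{kp}(b_0). \]
At $k = m$, the key lemma applied to $\phi_L^{-m} \in \Aut(S', \partial S')$ with the essential arc $b_0$ yields $-mn \leq c(\phi_L^{-m}, C) \leq mp$, i.e.\ $-mp \leq c(\phi_L^m, C) \leq mn$, and dividing by $m$ via the definition $c(\phi_L, C) = \frac{1}{m} c(\phi_L^m, C)$ produces the desired bound $-p \leq c(\phi_L, C) \leq n$.

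The main obstacle I anticipate is bookkeeping: carefully verifying that the describing-arc isotopy from Lemma~\ref{lemma:estimate} is genuinely undisturbed by the punctures (so that the strong-essentiality hypothesis there can be relaxed to essentiality in $S'$ for the braid version), and that the inductive iteration respects the strict-right-of ordering of arcs in $S'$ based at the fixed point on $C$. Both points are plausible but warrant care to avoid sign and off-by-one errors before the estimate can be cleanly concluded.
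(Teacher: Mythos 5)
Your proposal captures the intended argument: the paper proves this lemma only implicitly, by the remark that essential b-arcs (and all a-arcs) of $\F(F)$ become essential arcs in the punctured fiber $S' = S \setminus \{n\text{ points}\}$, so the mechanism of Lemma~\ref{lemma:estimate} transfers verbatim with $S$ replaced by $S'$. Your treatment of the subtlety that $\phi_L$ need not fix the punctures — passing to $\phi_L^m \in \Aut(S',\partial S')$, commuting $T_C$ past $\phi_L$, iterating the single-cycle inequality, invoking Lemma~\ref{lemma:fracDehn} for $\phi_L^{\pm m}$, and dividing by $m$ — is correct and is exactly what is required to make the definition $c(\phi_L,C) = \frac{1}{m}c(\phi_L^m,C)$ bite.

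There is one genuine gap: you dispose of the case $\sgn(v)=+1$ as \emph{symmetric}, but it is not. When $\sgn(v)=-1$ every leaf through $v$ is a b-arc, whereas for $\sgn(v)=+1$ the leaves through $v$ may include a-arcs. Note that Lemma~\ref{lemma:estimate} carries the extra hypothesis ``there are no a-arcs around $v$,'' precisely because in $S$ an a-arc is not obviously essential (its endpoint lies on the braid $L$, not on $\partial S$). Lemma~\ref{lemma:estimate-braid} drops that hypothesis, and this is the other half of the ``essential rather than strongly essential'' improvement you emphasize. To close the gap you need to observe, as the paper does just before the lemma, that every a-arc of $\F(F)$ is automatically an essential arc in $S'$ (it connects $C$ to a puncture), so the leaf $l_t$ through $v$ — whether an a-arc or a b-arc, and possibly switching type across hyperbolic points — is always essential in $S'$, and the same inequality $l_0 > l_{t_k+\varepsilon} > T_C(l_0)$ (resp.\ $T_C^{-1}(l_0) > l_{t_k+\varepsilon} > l_0$) holds at each positive (resp.\ negative) hyperbolic singularity. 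With that observation added, the positive case follows by the same chain of comparisons and your iteration argument.
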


\begin{theorem}
\label{theorem:estimate-braid}
Let $v_{1},\ldots,v_{n} \in \F(F)$ be essential elliptic points lying on the same component $C$ of $\partial S$.
Let $N$ (resp. $P$) be the total number of negative (resp. positive) hyperbolic points that are connected to at least one of $v_{1},\ldots,v_{n}$ by a singular leaf.
Let $f_{\pm} :\mathbb{N} \rightarrow \Q$ be a map defined by
\[ f_{-}(m) = \left\{
\begin{array}{ll}
 \frac{1}{m} \left\lceil  \frac{Nm}{n} -\frac{(n-1)^{2}}{4 n^{2}} \right\rceil
& ( n: \textrm{odd}) \\
& \\ 
\frac{1}{m} \left\lceil \frac{Nm}{n} -\frac{n-2}{4n}  \right\rceil
& ( n: \textrm{even})
\end{array}
\right.
\]
and
\[ f_{+}(m) = \left\{ 
\begin{array}{ll}
\frac{1}{m} \left\lceil \frac{Pm}{n} - \frac{(n-1)^{2}}{4n^{2}} \right\rceil
& ( n: \textrm{odd}) \\
& \\ 
\frac{1}{m} \left\lceil \frac{Pm}{n} -\frac{n-2}{4n} \right\rceil
& ( n: \textrm{even}).
\end{array}
\right.
\]
\begin{enumerate}
\item If $\sgn(v_{1})=\sgn(v_{2}) = \cdots = \sgn(v_{n}) = -1$, then
\begin{equation}
- \inf_{m \in \mathbb{N}} f_{+}(m)\leq c(\phi, L,C) \leq \inf_{m \in \mathbb{N}} f_{-}(m). 
\end{equation}
\item If $\sgn(v_{1})=\sgn(v_{2}) = \cdots = \sgn(v_{n}) = +1$, then 
\begin{equation}
- \inf_{m \in \mathbb{N}} f_{-}(m) \leq c(\phi, L,C) \leq  \inf_{m \in \mathbb{N}} f_{+}(m).
\end{equation}
\end{enumerate}
\end{theorem}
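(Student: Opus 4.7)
The plan is to adapt the proof of Theorem~\ref{theorem:estimate} to the braid setting, replacing the open book monodromy $\phi \in \Aut(S, \partial S)$ with the braid monodromy $\phi_L$ acting on the $n$-punctured fiber $S' = S \setminus (S \cap L)$. The fundamental observation, recorded in the paragraph preceding Lemma~\ref{lemma:estimate-braid}, is that essential b-arcs of $\F(F)$ (which are only required to be essential in $S \setminus (S \cap L)$ by Definition~\ref{def of essential}) are automatically essential arcs in $S'$. This is exactly the input Lemma~\ref{lemma:fracDehn} requires when applied to $\phi_L$, which explains why strong essentiality is not needed in this setting.

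First, I would pick $m_0 \in \mathbb{N}$ so that $\phi_L^{m_0} \in \Aut(S', \partial S')$ fixes the $n$ punctures pointwise. For arbitrary $m \in \mathbb{N}$, I form the $m_0 m$-fold cyclic branched cover of $M_{(S, \phi)}$ along the binding $B$, giving $M_{(S, \phi^{m_0 m})}$. Because the cover is branched only over $B$, the braid $L$ lifts to a closed $n$-braid $\widetilde L$ in $(S, \phi^{m_0 m})$ whose braid-complement monodromy on $S'$ is exactly $\phi_L^{m_0 m}$. The surface $F$ lifts via the stacking construction used in Theorem~\ref{theorem:estimate} to a surface $F_{m_0 m} \subset M_{(S, \phi^{m_0 m})}$ whose open book foliation remains essential; each $v_i$ is still an essential elliptic point, and the $N$ negative hyperbolic points originally attached to the $v_i$'s lift to $N m_0 m$ copies in total.

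Next, I would define the twisting $tw(b_t^i)$ and the total twisting $TW_t^C = \sum_{i=1}^n tw(b_t^i)$ exactly as in Theorem~\ref{theorem:estimate}, with geometric intersection numbers computed in $S'$. The annular collar $N(C)$ used in the bookkeeping can be chosen disjoint from the $n$ punctures of $S'$ since the punctures lie away from $\partial S'$. Under this choice, Observations~\ref{obs1} and~\ref{obs2} transfer verbatim: each negative hyperbolic singularity increments $TW_t^C$ by at most $n$, and among the first $\lfloor n/2 \rfloor$ such singularities, the $j$-th contributes at most $2j$. Applying Lemma~\ref{lemma:fracDehn} to $\phi_L^{m_0 m} \in \Aut(S', \partial S')$ and the essential b-arc $b_{m_0 m}^i$ yields $c(\phi_L^{m_0 m}, C) \leq \lceil tw(b_{m_0 m}^i)/n \rceil$. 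Taking the minimum over $i$, dividing through by $m_0 m$, and using $c(\phi_L^{m_0 m}, C) = m_0 m \cdot c(\phi_L, C)$ from the definition, we obtain $c(\phi_L, C) \leq f_-(m_0 m)$ for every $m$. Letting $m_0 m$ range over $\mathbb{N}$ establishes the upper bound in Case (1). The remaining bounds and Case (2) follow by symmetric arguments, and Lemma~\ref{lemma:estimate-braid} is the $n = 1$ specialization.

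The main obstacle will be confirming the compatibility between the cyclic branched covering construction and the braid $L$: specifically, that the lifted braid $\widetilde L$ has braid-complement monodromy $\phi_L^{m_0 m}$ acting on $S'$, and that essential b-arcs in $S'$ lift to essential b-arcs in the corresponding pages of the cover. Both are routine since the cover is abelian and branched only along $B$ (disjoint from the interior of the pages and from $L$), but these verifications are the only genuinely new ingredients beyond the combinatorial bookkeeping already established in Theorem~\ref{theorem:estimate}.
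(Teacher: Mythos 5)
Your strategy is the one the paper intends: it gives no separate proof of Theorem~\ref{theorem:estimate-braid}, presenting it as a ``variation'' of Theorem~\ref{theorem:estimate} whose only new ingredient is the observation that essential b-arcs (and all a-arcs) are essential arcs in the punctured fiber $S'$, so that Lemma~\ref{lemma:fracDehn} can be applied to powers of $\phi_L$ in place of powers of $\phi$. Your verifications about the branched cover --- that $L$ lifts to a closed $n$-braid with monodromy a power of $\phi_L$, that essentiality persists in the cover, and that Observations~\ref{obs1} and \ref{obs2} transfer once $N(C)$ is chosen away from the punctures --- are all correct and are exactly the ``routine'' points that need saying.

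There is, however, a genuine gap in the last step. By running the argument only in the $m_0m$-fold covers you prove $c(\phi_L,C)\le f_-(m')$ only for $m'\in m_0\mathbb{N}$; the phrase ``letting $m_0m$ range over $\mathbb{N}$'' is where the slip occurs, since $m_0m$ ranges over multiples of $m_0$, not over all of $\mathbb{N}$. What you obtain is $c(\phi_L,C)\le\inf_{m}f_-(m_0m)$, which can be strictly weaker than the asserted $\inf_{m\in\mathbb{N}}f_-(m)$: for instance with $N=1$ and $n=7$ one has $f_-(1)=\lceil 1/7-9/49\rceil=0$, whereas $f_-(m)\ge 1/7-\frac{9}{49m}>0$ for every $m\ge 2$, so if $m_0=2$ your argument misses the optimal bound $c(\phi_L,C)\le 0$. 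The fix is to treat an arbitrary $m$: in the $m$-fold cover the comparison still yields $\phi_L^{m}(b^i_{m})=b^i_0\ge T_C^{\nu}(b^i_{m})$ with $\nu=\lceil tw(b^i_{m})/n\rceil$, and $b^i_{m}$ is a well-defined essential arc of $S'$ because b-arcs avoid the punctures and $\phi_L$ is the identity on $\partial S$. Since $\phi_L$ preserves the ordering $\ge$ of arcs based on $C$ and commutes with $T_C$, iterating gives $\phi_L^{mm_0}(b^i_{m})\ge T_C^{\nu m_0}(b^i_{m})$, and now $\phi_L^{mm_0}\in\Aut(S',\partial S')$, so Lemma~\ref{lemma:fracDehn} gives $mm_0\,c(\phi_L,C)=c(\phi_L^{mm_0},C)\le \nu m_0$, hence $c(\phi_L,C)\le\nu/m$ for \emph{every} $m$. (A minor secondary point: in case (2) the leaf through a positive $v_i$ may be an a-arc at some times; this is harmless because a-arcs are automatically essential in $S'$ and their puncture endpoints are respected by the puncture-fixing power of $\phi_L$, but it should be noted since the b-arc-only hypothesis of Theorem~\ref{theorem:estimate} has been dropped here.)
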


\section{Non-right-veeringness and open book foliation}\label{sec6}

The notion of right-veeringness is closely related to tightness of the contact structure supported by the open book due to the following theorem of Honda-Kazez-Mati\'c \cite[Theorem 1.1]{hkm1}, (cf. \cite[Theorem 2.4]{ik1-2} for an alternative proof using open book foliations). 

\begin{theorem}
\label{theorem:ot}
If $\phi \in \Aut(S, \partial S)$ is not right-veering then $(S,\phi)$ supports an overtwisted contact structure.
\end{theorem}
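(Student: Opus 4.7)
The plan is to establish the theorem via the Giroux correspondence by explicitly producing a transverse overtwisted disc in $M_{(S,\phi)}$ from the non-right-veering data, and then invoking the equivalence (stated just before this theorem in the excerpt) between the existence of a transverse overtwisted disc and overtwistedness of $\xi_{(S,\phi)}$.

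First, I would unpack the hypothesis: since $\phi$ is not right-veering, there exists a boundary component $C \subset \partial S$ and an essential properly embedded arc $\gamma \subset S$ starting at a point $* \in C$ such that $\phi(\gamma)$ lies strictly to the left of $\gamma$ near $*$. I would choose $\gamma$ to be ``simplest'' in the sense that $\gamma$ and $\phi(\gamma)$ are put into minimal geometric position, and among all non-right-veering arcs, the number of intersection points of $\gamma$ with $\phi(\gamma)$ (plus perhaps the number of endpoints on non-$C$ boundary components) is minimized. This minimality is the analogue of the ``simplest'' arc used in Theorem~\ref{theorem:nrv}.

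Next I would construct a candidate disc $D \subset M_{(S,\phi)}$ as follows. Realize $M_{(S,\phi)}$ as the mapping torus $S \times [0,1]/(x,1)\sim(\phi(x),0)$ with the binding solid tori glued in. Take the product strip $\gamma \times [0,1]$ inside the mapping torus; its top face $\gamma \times\{1\}$ glues to $\phi(\gamma)\times\{0\}\subset S_0$. Because $\phi(\gamma)$ lies to the left of $\gamma$ near $*$, the two arcs $\gamma$ and $\phi(\gamma)$ together with a short arc on $C$ cobound a subsurface $\Delta \subset S_0$; capping off the top of the strip with $\Delta$ yields a disc $D$ whose boundary is a positive transverse unknot traversing the page once along $C$. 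After a small perturbation (and an application of Theorem~\ref{theorem:existence} to remove c-circles) $D$ carries an open book foliation $\F(D)$.

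Then I would analyze $\F(D)$ and verify the three conditions of Definition~\ref{def:trans-ot-disc}. The elliptic points of $\F(D)$ correspond to intersections of $\Delta$ with the binding and to the distinguished point $*$; the hyperbolic points correspond to the intersection points of $\gamma$ with $\phi(\gamma)$ (interpreted as saddle tangencies as one sweeps through the pages). Minimality of the intersection pattern of $\gamma$ with $\phi(\gamma)$ should force the region decomposition of $D$ to consist only of ab-tiles and bb-tiles, with the ab-tiles sharing the unique positive elliptic point coming from $*$ (so that $G_{++}\cong S^1$), and with the bb-tiles assembling into a tree-shaped $G_{--}$ with no fake vertices.

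The main obstacle is verifying that the combinatorial structure of $\F(D)$ really matches Definition~\ref{def:trans-ot-disc}; in particular, ruling out fake vertices in $G_{--}$ and confirming $G_{++}\cong S^1$ hinges critically on the minimality choice of $\gamma$, since any failure corresponds to a way to reduce the intersection complexity and thereby contradicts minimality. Once $D$ is shown to be a transverse overtwisted disc, the theorem quoted just before Definition~\ref{def:trans-ot-disc} (\cite[Proposition 6.2 and Corollary 6.5]{ik}) immediately yields that $\xi_{(S,\phi)}$ is overtwisted.
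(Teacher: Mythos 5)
Your overall strategy --- manufacture a transverse overtwisted disc from the non-right-veering arc and then quote the equivalence between such discs and overtwistedness of $\xi_{(S,\phi)}$ --- is exactly the route the paper intends: it recalls the statement as Honda--Kazez--Mati\'c's theorem and points to the open book foliation proof of \cite[Theorem 6.7]{ik}, which is also what the forward direction of Theorem~\ref{theorem:nrv} invokes. The gap is in your construction of the disc. The step ``$\gamma$ and $\phi(\gamma)$ together with a short arc on $C$ cobound a subsurface $\Delta\subset S_0$'' is false in general: the two arcs typically intersect each other away from their endpoints, and even when they are disjoint except at endpoints the region of $S_0\setminus(\gamma\cup\phi(\gamma))$ adjacent to both need not be a disc nor have boundary $\gamma\cup\phi(\gamma)\cup(\textrm{arc of }C)$. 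Moreover $\phi$ fixes $\partial S$ pointwise, so $\phi(\gamma)$ shares \emph{both} endpoints with $\gamma$, while the non-right-veering hypothesis controls their relative position only near the distinguished endpoint $*$; at the other endpoint $\phi(\gamma)$ may veer either way, so the strip $\gamma\times[0,1]$ does not close up as you describe. (Note also that a transverse overtwisted disc with one negative and $k$ positive elliptic points has braid index $k-1$, so its boundary does not in general traverse the pages once.)

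The construction that actually works is a page-by-page ``movie'': fix a single negative elliptic point $v$ on $C$ and choose a family of arcs $b_t\subset S_t$ interpolating from $b_0=\phi(\gamma)$ to $b_1=\gamma$ so that each successive arc lies strictly on the right of the previous one near $v$; each elementary move is then realized by a \emph{positive} hyperbolic singularity, with the free endpoint of $b_t$ jumping among positive elliptic points $w_1,\dots,w_k$. This sign control is the whole point: it forces the region decomposition to consist only of positive ab-tiles fanning around $v$, so that $G_{--}=\{v\}$ is trivially a tree with no fake vertices and $G_{++}$ is the circle through $w_1,h_1,\dots,w_k,h_k$, exactly as required by Definition~\ref{def:trans-ot-disc} and the remark following it. Your proposal never addresses the signs of the hyperbolic points, and minimality of the geometric intersection of $\gamma$ with $\phi(\gamma)$ does not by itself deliver conditions (1)--(3); since you yourself flag this verification as ``the main obstacle,'' the argument is missing its central step.
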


In this section we study (non) right-veeringness using open book foliations. 
Among the results, Corollary~\ref{cor:n(S,phi)} highlights the fact that the converse of Theorem~\ref{theorem:ot} does not hold in general,
which has been  already observed and studied in \cite{hkm1}. Namely, right-veeringness does not imply tightness of the compatible contact structure.

Recall Definition~\ref{def:trans-ot-disc} of a transverse overtwisted disc. 

\begin{theorem}\label{theorem:nrv}
A diffeomorphism $\phi \in\Aut(S, \partial S)$ is non-right-veering if and only if there exists a transverse overtwisted disc $D$ in $(S,\phi)$ whose graph $G_{--}$ consists of a single negative elliptic point as depicted in Figure~\ref{fig:transot}.  
\begin{figure}[htbp]
 \begin{center}
 \SetLabels
\endSetLabels
\strut\AffixLabels{\includegraphics*[scale=0.5, width=50mm]{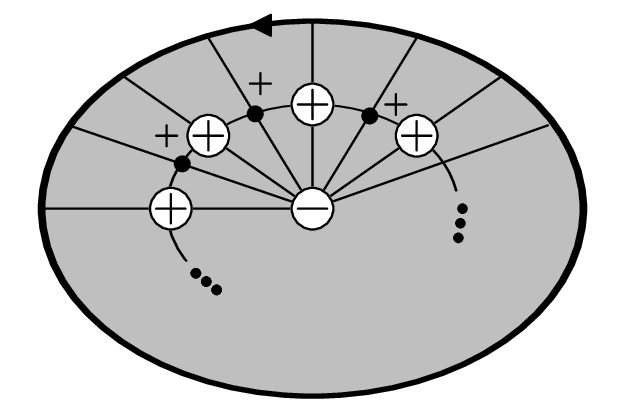}}
 \caption{A transverse overtwisted disc with one negative elliptic point.}
 \label{fig:transot}
  \end{center}
\end{figure}
\end{theorem}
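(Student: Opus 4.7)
The plan is to prove both directions of the biconditional, with the key structural observation being that having exactly one negative elliptic point forces a very rigid ``pinwheel'' configuration on $D$.

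For the $(\Leftarrow)$ direction, I would first extract the combinatorial structure from Definition~\ref{def:trans-ot-disc}. Because $G_{--}$ is a tree with no fake vertices but only one vertex, it has no edges, so $D$ contains no negative hyperbolic singularities; by the remark following Definition~\ref{def:trans-ot-disc} the region decomposition of $D$ then consists entirely of (positive) ab-tiles sharing the sole negative elliptic point $v$ as a common vertex, with their positive elliptic corners forming the cycle $G_{++}\cong S^1$. Let $v$ lie on the boundary component $C$ of $S$ and let $b_0\subset S_0$ be a b-arc of $\F(D)$ ending at $v$. Since every hyperbolic singularity the b-arc meets as $t$ increases is positive and $\sgn(v)=-1$, the step-by-step twist analysis from the proof of Lemma~\ref{lemma:estimate} applies and yields $b_0 > \phi^{-1}(b_0)$. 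Applying $\phi$ (which preserves the left/right ordering because it preserves the orientation of $S$) gives $\phi(b_0) > b_0$, i.e.\ $\phi(b_0)$ lies strictly on the left of $b_0$ near $v$; provided $b_0$ is an essential arc in $S$, this witnesses that $\phi$ is not right-veering with respect to $C$.

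For the $(\Rightarrow)$ direction, I would reverse-engineer the pinwheel and construct $D$ explicitly from a non-right-veering witness. Take an essential arc $\gamma$ starting at $p\in C$ with $\phi(\gamma)$ strictly on the left of $\gamma$ near $p$, designate $v=p$ as the intended negative elliptic vertex, and build $D$ as a swept disc interpolating between $\gamma\subset S_0$ and $\phi(\gamma)$ viewed on $S_1\cong S_0$ via the monodromy identification. Each time this interpolation must cross $\gamma\cup\phi(\gamma)$ during the $2\pi$-rotation around the binding at $p$, a positive hyperbolic singularity is inserted, which glues in an ab-tile; the $L$-edges of these tiles concatenate into the braid $L=\partial D$. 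The self-linking formula of Proposition~\ref{sl-formula-1} gives $sl(L,[D])=-(e_+-1)+(e_+-0)=1$, so $L$ is a positive unknot braid; since $G_{--}$ is then the single vertex $v$ and $G_{++}$ is the cycle through the positive elliptic corners, $D$ is a transverse overtwisted disc with exactly one negative elliptic point.

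The main obstacle I anticipate is the essentiality issue in the $(\Leftarrow)$ argument: Lemma~\ref{lemma:estimate} requires strong essentiality of $b_0$ (so that $b_0\neq T_C(b_0)$ in $S$), whereas the b-arcs of a transverse overtwisted disc are a priori only essential in $S\setminus L$. I expect to close this gap either by arguing that boundary parallelism of $b_0$ in $S$ would collapse the pinwheel (permitting a reduction in the ab-tile count and contradicting the assumed structure), or by a small local isotopy of $D$ supported near $v$ that strongly essentializes $b_0$ without disturbing the rest of $D$. A secondary technicality in $(\Rightarrow)$ is verifying embeddedness and transversality of the swept disc; this amounts to a local-to-global construction paralleling the existence theorems of \cite{ik}, with the additional demand that exactly one negative elliptic point appears.
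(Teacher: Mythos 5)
Your overall architecture matches the paper's: the $(\Leftarrow)$ direction proceeds exactly as you describe (no negative hyperbolic points, a pinwheel of positive ab-tiles around the unique negative elliptic point $v$, and the iterated comparison from the proof of Lemma~\ref{lemma:estimate} giving $\phi(b_1)=b_0>b_{t_1+\e}>\cdots>b_{t_k+\e}=b_1$ near $v$), and for $(\Rightarrow)$ the paper simply invokes the construction from the proof of \cite[Theorem 6.7]{ik}, which is the swept-disc construction you sketch. However, there is a genuine gap at precisely the point you flag as "the main obstacle": you must prove that $v$ is strongly essential (so that $b_1$ is an essential arc in $S$ and the inequality $\phi(b_1)>b_1$ actually certifies non-right-veering), and neither of your two proposed fixes is carried out, nor would they work as stated. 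The local-isotopy idea fails because whether $b_0$ is boundary-parallel in $S_0$ is not a property localized near $v$; the b-arc runs all the way to a positive elliptic point $w$, and changing its isotopy class rel endpoints in $S_0$ is not a small perturbation of $D$ supported near $v$. The "collapse the pinwheel" idea is also not immediate: the theorem does not assume $D$ is minimal in any sense, so "permitting a reduction in the ab-tile count" contradicts nothing unless you first pass to a minimal disc and then actually exhibit the reduction, which you do not do.

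The paper closes this gap with a self-contained sign/innermost argument (Claim~\ref{claim v}) that you should adopt. Suppose some b-arc ending at $v$ is inessential in its page; among all $t$, choose $b_0$ (after cyclically reparametrizing) so that the disc $\Delta_0\subset S_0$ cobounded by $b_0$ and a sub-arc $\overline{wv}$ of $C$ is minimal, in the sense that no $\Delta_t$ cobounded by another $b_t$ projects into $\Delta_0$. There are two cases according to which side of $b_0$ the disc $\Delta_0$ lies on. Because $\sgn(v)=-1$ and $\sgn(h_1)=+1$, the describing arc for the first hyperbolic singularity $h_1$ is forced to lie in the interior of $\Delta_0$, so $b_{t_1+\e}$ cobounds a disc $\Delta_{t_1+\e}\subsetneq\Delta_0$, contradicting minimality. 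This is purely an analysis of the foliation of the pages and requires no modification of $D$; once you have it, your $(\Leftarrow)$ argument is complete. Your $(\Rightarrow)$ sketch is consistent with the cited construction, though making the embeddedness and the count $e_-=1$ rigorous is exactly the content of \cite[Theorem 6.7]{ik}, which the paper is entitled to quote and you would otherwise have to reprove.
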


\begin{proof}
($\Rightarrow$) 
If $\phi$ is non-right-veering then there exists an essential properly embedded arc $\gamma$ in $S$ such that $\phi(\gamma) > \gamma$. 
As shown in the proof of \cite[Theorem 2.4]{ik1-2} we can construct a transverse overtwisted disc with exactly one negative elliptic point.

($\Leftarrow$) 
Let $v =G_{--}$ be the negative elliptic point in $\F(D)$.
Let $k$ be the number of positive hyperbolic points in the graph $G_{++}$ of $\F(D)$ and $h_1,\dots,h_k$ be the positive hyperbolic points. 
Let $0< t_1 < \cdots < t_k < 1$ be the numbers such that the page $S_{t_i}$ contains $h_i$.
Let $b_t \subset S_t$ ($t\neq t_i$) be the b-arc emanating from $v$.

To show the monodormy is not right-veering we observe the following.

\begin{claim}\label{claim v}
$v$ is strongly essential.
\end{claim}

\begin{proof}
Let $C \subset \partial S$ be the binding component on which $v$ lies. 
Assume contrary that $v$ is not strongly essential. 
Namely, some b-arc from $v$, say $b_{0}$, is boundary parallel in the page $S_0$. 
Let $w \in \partial b_0$ be the positive elliptic point. 
The arc $\overline{wv} \subset C$ cobounds a disc $\Delta_0 \subset S_0$ with $b_0$. 
There are two cases to consider:
\begin{description}
\item[(i)] The disc $\Delta_0$ lies on the right side of $b_0$ as we walk from $v$ to $w$ (upper-left sketch in Figure~\ref{fig:case_i_ii}). 
\item[(ii)] The disc $\Delta_0$ lies on the left side of $b_0$ as we walk from $v$ to $w$ (lower-right sketch in Figure~\ref{fig:case_i_ii}).
\end{description}

\begin{figure}[htbp]
\begin{center}
\SetLabels
(-.1*.95) Case (i)\\
(.03*.88) $v$\\
(.03*.75) $w'$\\
(.03*.65) $w$\\
(.25*.7) $b_0$\\
(.13*.83) $\Delta_0$\\
(.03*.57) $C$\\
(.2*.78) $h_1$\\
(.6*.88) $v$\\
(.6*.75) $w'$\\
(.6*.65) $w$\\
(.75*.9) $b_{t_1+\e}$\\
(.7*.83) $\Delta_{t_1+\e}$\\
(.6*.57) $C$\\
(-.1*.42) Case (ii)\\
(.03*.37) $w$\\
(.03*.14) $v$\\
(.43*.14) $w''$\\
(.03*.05) $C$\\
(.25*.3) $h_k$\\
(.25*.17) $b_{t_k - \e}$\\
(.6*.37) $w$\\
(.6*.14) $v$\\
(.98*.14) $w''$\\
(.72*.3) $\Delta_0$\\
(.83*.36) $b_0 \approx b_{t_k + \e}$\\
(.6*.05) $C$\\
\endSetLabels
\strut\AffixLabels{\includegraphics*[width=100mm]{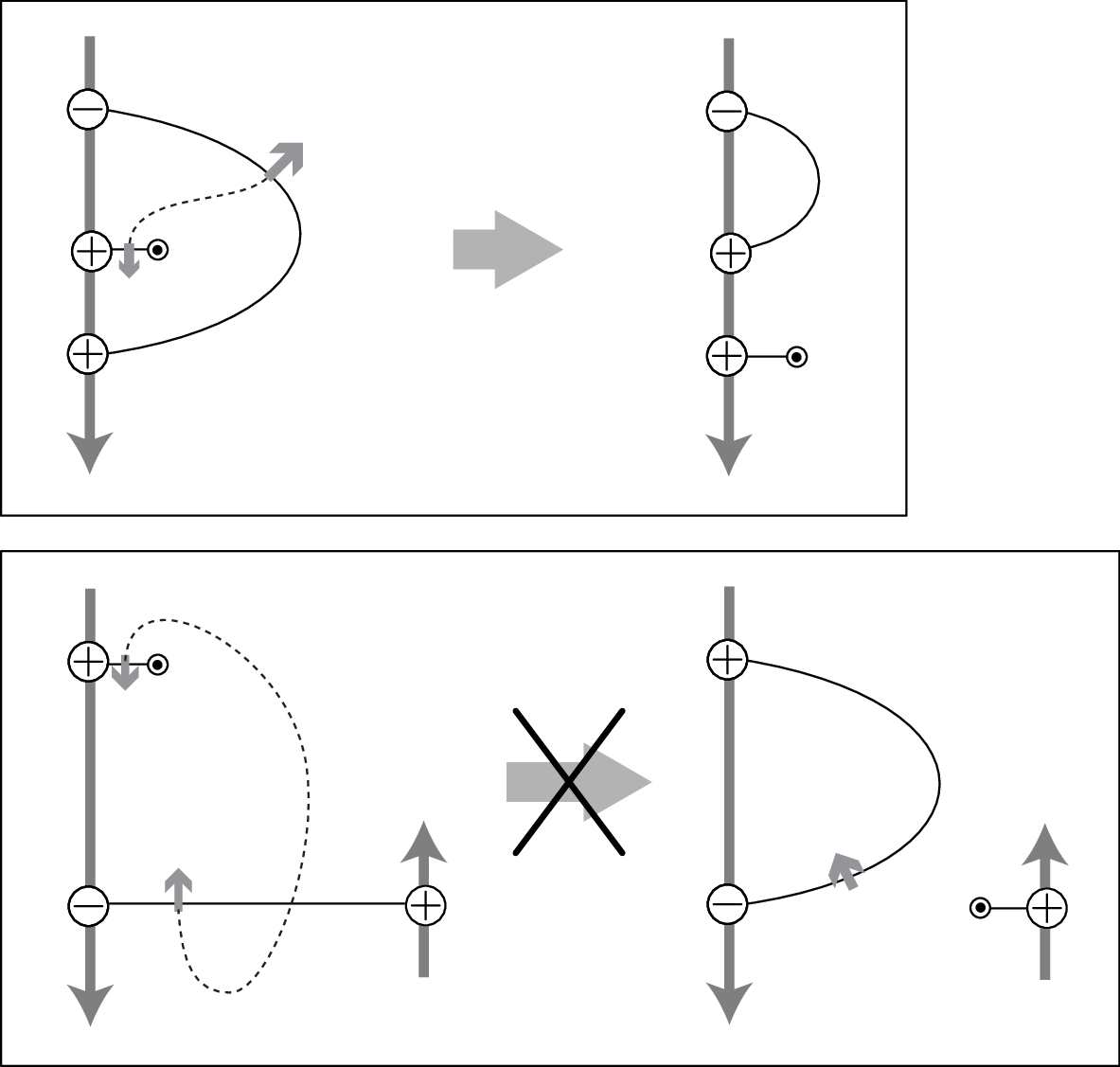}}
 \caption{Proof of Claim~\ref{claim v}.}
\label{fig:case_i_ii}
\end{center}
\end{figure}

Case (i): 
We may assume that the disc $\Delta_0$ is {\em minimal} in the sense that if a b-arc $b_{t}$ cobounds a disc $\Delta_{t} \subset S_t$ with $C$ then $\Delta_{t} \not \subset \Delta_0$ or $\Delta_{t} = \Delta_0$ up to isotopy fixing $\partial S$ and under the natural projection $\mathcal P: S_t \to S$. 
(In the following for the sake of  simplicity we denote the images of arcs and discs under $\mathcal P$ by the same symbols.) 
Since $\sgn(h_1)=+1$ and $\sgn(v)=-1$, Remark~\ref{remark sign describing arc} implies that the describing arc for $h_1$ lies on the right side of $b_0$ as we walk from $v$, hence $b_{t_1+\e}$ is contained in $\Delta_0$. 
That is, $b_{t_1+\e}$ cobounds a disc $\Delta_{t_1+\e}$ inside of $\Delta_0$, which contradicts  the minimality of $\Delta_0$.

Case (ii): 
The family of b-arcs $\{ b_t | \ t_k < t \leq 1, \ 0 \leq t < t_1 \}$ are isotopic relative to the binding. 
Since $\sgn(h_k)=+1$ and $\sgn(v)=-1$, Remark~\ref{remark sign describing arc} implies that the describing arc for $h_k$ lies on the right side of $b_{t_k-\e}$ as we walk from $v$. To get the boundary parallel b-arc $b_{t_k + \e}$ the describing arc must intersect $b_{t_k-\e}$ in the interior, which violates the requirement that a describing arc is properly embedded in $S_t \setminus (S_t \cap F)$. 
\end{proof}

We continue the proof of Theorem \ref{theorem:nrv}. 
Recall that all the hyperbolic points $h_1,\dots,h_k$ are positive. This means the same argument as in the proof of Lemma \ref{lemma:estimate} implies
\[ \phi(b_1) = b_0 > b_{t_1+\e} > b_{t_2 + \e} > \cdots > b_{t_k+\e} = b_1 \mbox{ near } v. \] 
Since $b_1$ is an essential arc in $S_1$ by Claim~\ref{claim v} we conclude that $\phi$ is not right-veering with respect to $C$.  
\end{proof}

Recall that every transverse overtwisted disc contains at least one negative elliptic point and $e_-(\F(F))$ denotes the number of negative elliptic points in the open book foliation $\F(F)$ (Definition~\ref{def of e}). 

\begin{definition}
Let 
\[
n(S, \phi) = \min \left\{
e_-(\F(D)) \ | \ D \subset M_{(S, \phi)}: \mbox{transverse overtwisted disk} \right\}.
\]
If $\xi_{(S, \phi)}$ is tight let $n(S, \phi)=0$. 
It is an invariant of open books and we call it the {\em overtwisted complexity}. 
\end{definition}

\begin{corollary}\label{cor:n(S,phi)}
As a consequence of Theorem~\ref{theorem:nrv} we have:
\begin{itemize}
\item
$n(S, \phi) = 0$ if and only if $\xi_{(S, \phi)}$ is tight (and hence $\phi$ is right veering). 
\item 
$n(S, \phi) = 1$ if and only if $\xi_{(S, \phi)}$ is overtwisted and $\phi$ is non right veering. 
\item 
$n(S, \phi) \geq 2$ if and only if $\xi_{(S, \phi)}$ is overtwisted and $\phi$ is right veering. 
\end{itemize}
\end{corollary}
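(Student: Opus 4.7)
The plan is to deduce all three equivalences from Theorem~\ref{theorem:nrv}, the definition of $n(S,\phi)$, and the characterization of overtwistedness via the existence of a transverse overtwisted disc \cite[Cor.~6.5]{ik}. The single preliminary observation I need is that every transverse overtwisted disc $D$ satisfies $e_-(\F(D)) \geq 1$: by Definition~\ref{def:trans-ot-disc} the graph $G_{--}$ is a (non-empty) tree whose vertices are precisely the negative elliptic points of $\F(D)$ lying in $ab$- and $bb$-tiles, so at least one such point exists. In particular, whenever $\xi_{(S,\phi)}$ is overtwisted, $n(S,\phi) \geq 1$.

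For bullet (1), the forward direction is by the second clause of the definition of $n(S,\phi)$. For the converse, suppose $n(S,\phi) = 0$; if $\xi_{(S,\phi)}$ were overtwisted, the preliminary observation would force $n(S,\phi) \geq 1$, a contradiction. Hence $\xi_{(S,\phi)}$ is tight, and the parenthetical right-veering-ness then follows from the Honda-Kazez-Mati\'c theorem \cite{hkm1}. For bullet (2), if $\xi_{(S,\phi)}$ is overtwisted and $\phi$ is non-right-veering, then Theorem~\ref{theorem:nrv} supplies a transverse overtwisted disc with $e_- = 1$, giving $n(S,\phi) \leq 1$; combined with the preliminary observation this yields $n(S,\phi) = 1$. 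Conversely, $n(S,\phi) = 1$ forces $\xi_{(S,\phi)}$ to be overtwisted by (1), and a minimizer has exactly one negative elliptic point, so the $(\Leftarrow)$ half of Theorem~\ref{theorem:nrv} produces a non-right-veering essential arc. Bullet (3) follows by elimination: $n(S,\phi) \geq 2$ is the complement of the two preceding cases in $\mathbb{Z}_{\geq 0}$, which by (1)--(2) is exactly ``$\xi_{(S,\phi)}$ overtwisted and $\phi$ right-veering''.

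The whole argument is essentially bookkeeping built on top of Theorem~\ref{theorem:nrv}, so the only mild obstacle is pinning down the preliminary observation $e_-(\F(D)) \geq 1$ cleanly. I would either interpret the ``tree'' in Definition~\ref{def:trans-ot-disc} as automatically non-empty, or, if a reader prefers a structural argument, invoke the remark following that definition (the region decomposition of $D$ consists of $ab$- and $bb$-tiles with $\sgn(ab) = +1$ and $\sgn(bb) = -1$) together with the hypothesis $G_{++} \cong S^1$, which forces the existence of a complementary region supporting a non-trivial $G_{--}$ and hence at least one negative elliptic vertex.
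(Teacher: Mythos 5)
Your proposal is correct and is exactly the intended argument: the paper offers no written proof beyond ``as a consequence of Theorem~\ref{theorem:nrv},'' and your bookkeeping (tight $\Rightarrow n=0$ by definition; overtwisted $\Rightarrow$ a transverse overtwisted disc exists and has $e_-\geq 1$, so $n\geq 1$; Theorem~\ref{theorem:nrv} pins down exactly when the minimum equals $1$; the third bullet by elimination) is the natural way to fill it in. Your preliminary observation $e_-(\F(D))\geq 1$ is also fine as justified; one could alternatively get it from $sl(\partial D)=1$ and Proposition~\ref{sl-formula-1}, which give $e_- = h_- + 1 \geq 1$.
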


\begin{example}
Infinitely many examples of $n(S, \phi) \geq 2$ exist: 
The examples in \cite[Example 3.1]{ik1-2} have $n(S, \phi)=2$ and their open books are destabilizable. 
On the other hand, 
Theorem 4.1 of \cite{ik1-2} has infinitely many examples of $n(S, \phi)=2$ with non-destabilizable open books. 
\end{example}

We give more sufficient conditions for non-right-veeringness. 
Recall that a bc-annulus is {\em degenerate} if the two boundary b-arcs are identified. 
Therefore, a degenerate bc-annulus is topologically a disc and neighborhood of the boundary is foliated by c-circles as shown in Figure~\ref{fig:degeneratebc}.
\begin{figure}[htbp]
\begin{center}
\SetLabels
(0.5*0.9) {\small identified}\\
(0.2*0.0) {bc-annulus}\\
(0.78*0.0) {degenerate bc-annulus}\\
\endSetLabels
\strut\AffixLabels{\includegraphics*[width=85mm]{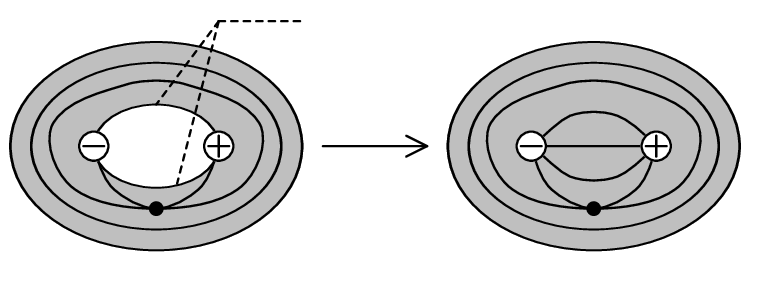}}
\end{center}
\caption{A degenerate $bc$-annulus.}
\label{fig:degeneratebc}
\end{figure}

\begin{proposition}
\label{theorem:otc-circle}
Suppose that there exists a (possibly closed) surface $F$ in $M_{(S, \phi)}$  containing a degenerate  bc-annulus $R$  whose c-circles are essential (Definition~\ref{def of essential}). 
Then $\phi$ is not right-veering.
\end{proposition}

\begin{proof}
Let $S_{t_0}$ be the fiber on which the unique hyperbolic point $h \in \F(R)$ lies.
Let $v^{+}$ and $v^{-}$ be the positive and the negative elliptic points of $\F(R)$. 
As in the proof of Theorem \ref{theorem:nrv}, we first observe the following.

\begin{claim}\label{claim:v}
$v^{\pm}$ are strongly essential.
\end{claim}

\begin{proof} 
Assume contrary that $v^{\pm}$ are not strongly essential. 
Then, for {\em any} $t \neq t_0$ the b-arc $b_{t} \subset R \cap S_{t}$ joining $v^+$ and $v^-$ cobounds a disc $\Delta_{t} \subset S_{t}$ with a binding component.
At $t=t_0$ a c-circle and a b-arc meet and form the hyperbolic point, $h$. After the configuration change they become a single b-arc. 
For $t\in (t_0-2\e, t_0)$ let $c_{t} \subset R \cap S_t$ denote the c-circle.
There are two cases to consider:
\begin{enumerate} 
\item If $c_{t_0-\e} \subset \Delta_{t_0-\e}$ then $c_{t_0-\e}$ bounds a disc $X \subset \Delta_{t_0-\e}\subset S_{t_0-\e}$.
\item If $c_{t_0-\e} \subset (S_{t_0-\e} \setminus \Delta_{t_0-\e})$ then $c_{t_0-\e} \# b_{t_0-\e} \simeq b_{t_0+\e} \subset \partial \Delta_{t_0+\e}$, so $c_{t_0-\e}$ must bound a disc $X \subset S_{t_0-\e}\setminus\Delta_{t_0-\e}$. 
\end{enumerate}

Let $m= | X \cap \partial F |$ and $n=|\Delta_{t_0-\e} \cap \partial F |$. 
As a consequence of the configuration change before and after the hyperbolic point $h$ we have 
$$|\Delta_{t_0 + \e} \cap \partial F | = 
\left\{
\begin{array}{ll}
n-m & \mbox{ for case (1),}\\
n+m & \mbox{ for case (2).}
\end{array}
\right.  
$$
Since $h$ is the only hyperbolic point connected to $v^\pm$ we have 
$$
b_t \simeq \left\{
\begin{array}{cc}
b_1 & \mbox{ for } t_0 < t \leq 1 \\
b_0 & \mbox{ for } 0 \leq  t < t_0 
\end{array}
\right.
$$
where ``$\simeq$'' means isotopic relative to the boundary points $v^{\pm}$. 
Therefore the discs satisfy $\Delta_t \simeq \Delta_1$  (isotopic rel. the binding) for $t_0 < t \leq 1$
and 
$\Delta_t \simeq \Delta_0$ for $0 \leq t < t_0$. 
Note the monodromy $\phi$ identifies $\Delta_0$ and $\Delta_1$.  
Thus we have
\[ |\Delta_{t_0 - \e} \cap \partial F| = |\Delta_{t_0 + \e} \cap \partial F|.\]
This shows $m=0$ for either case, i.e., $c_t$ is inessential, which contradicts our  assumption.
\end{proof}

We continue the proof of Proposition~\ref{theorem:otc-circle}. 

Since the c-circles $c_t$ are essential we have 
$$
\phi(b_1)= b_0 = b_{t_0 - \e}  > b_{t_0 + \e}=b_1 \ \mbox{ near }v^{\mp} \mbox{ if } \sgn(h) = \pm1.
$$
The above inequality can be justified by 
Figure~\ref{sgn_h}, which describes a neighborhood of $b_{t_0-\e}$ in $S_{t_0-\e}$.  
\begin{figure}[htbp]
\begin{center}
\SetLabels
(-.02*.5) $v^-$\\
(.39*.5) $v^+$\\
(.62*.5) $v^-$\\
(1.02*.5) $v^+$\\
(.15*.2) $\gamma$\\
(.1*.6) $b_{t_0-\e}$\\
(.1*-.02) $b_{t_0 +\e}$\\
(.32*0) $b_{t_0 +\e}$\\
(.84*.6) $\gamma$\\
(.75*.38) $b_{t_0-\e}$\\
(.7*.8) $b_{t_0 +\e}$\\
(.93*.87) $b_{t_0 +\e}$\\
(.2*1.07) ($\sgn(h)=+1$)\\
(.82*1.07) ($\sgn(h)=-1$)\\
\endSetLabels
\strut\AffixLabels{\includegraphics*[width=90mm]{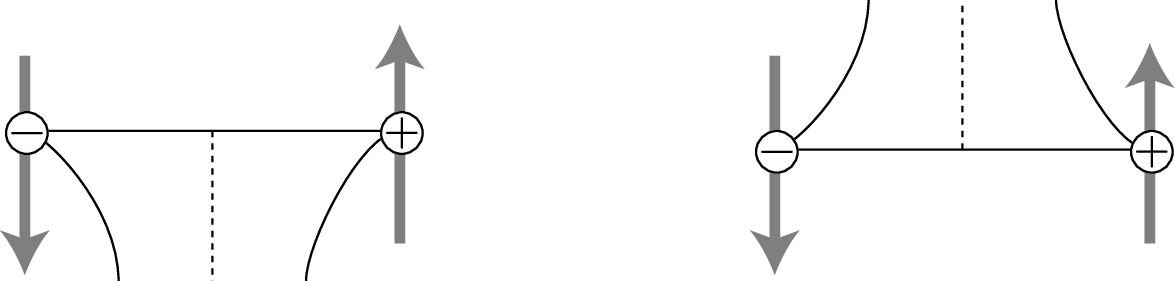}}
\end{center}
\caption{The arc $\gamma$ is a describing arc for $h$ and arrows indicate the orientation of the binding. }
\label{sgn_h}
\end{figure}

Let $C^{\pm} \subset \partial S$ (possibly $C^{+}=C^{-}$) be the binding components of the open book that contains $v^{\pm}$ respectively . 
By Claim~\ref{claim:v} every b-arc $b_t$ is strongly essential, hence $b_t$ is an essential arc in $S$. 
Applying Lemma~\ref{lemma:fracDehn} we obtain 
$ c(\phi, C^{\mp}) < 0$ when $\sgn(h)=\pm1$, respectively, i.e., $\phi$ is not right veering. 
\end{proof}

The conditions in the following corollary guarantee the existence of a degenerate bc-annulus of Proposition~\ref{theorem:otc-circle}. 

\begin{corollary}
Let $D \subset (S, \phi)$ be a disc with an open book foliation $\F(D)$ satisfying
\begin{enumerate}
\item $sl(\partial D, [D])=1$, 
\item $\F(D)$ contains c-circles, and
\item $e_-(\F(D))=1$. 
\end{enumerate}
Then $\phi\in\Aut(S, \partial S)$ is non-right-veering.
\end{corollary}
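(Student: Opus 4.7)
The plan is to locate a degenerated bc-annulus with essential c-circles inside $\F(D)$ and invoke Proposition~\ref{theorem:otc-circle}. First, the numerics of $\F(D)$ are tightly constrained: by Proposition~\ref{sl-formula-1}, $sl(\partial D,[D]) = -(e_+ - e_-) + (h_+ - h_-) = 1$ and $\chi(D) = (e_+ + e_-) - (h_+ + h_-) = 1$. Adding these equations and using $e_-=1$ yields $h_- = 0$ (and $e_+ = h_+$). In particular every hyperbolic singularity of $\F(D)$ is positive, and every region of the region decomposition carries a positive hyperbolic.

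Next, I would pick an innermost c-circle $c$ of $\F(D)$ bounding a sub-disc $\Delta \subset D$ with no c-circles in its interior. Because $c$ lies in the interior of $D$, $\Delta$ is disjoint from $\partial D$, so $\F(D)|_\Delta$ has no a-arcs. The region $R$ of the decomposition lying inside $\Delta$ and adjacent to $c$ must therefore be a bc-annulus: it cannot be an ac-annulus (no a-arcs on the $\Delta$-side) nor a cc-pants (by the innermost choice of $c$).

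The heart of the proof is then to arrange that $R$ is degenerated and that $c$ is essential in its page. For degeneracy, if $R$ were non-degenerated, its b-arc boundary $\beta$ would have two distinct elliptic endpoints and $\Delta \setminus R$ would be a sub-disc tiled entirely by positive bb-tiles (no a-arcs or c-circles). An inductive innermost-b-arc analysis inside $\Delta \setminus R$, subject to the rigid constraints $e_-=1$ and $h_-=0$ on bb-tile sign configurations, would force a degenerated tile; propagating back through the decomposition yields a degenerated bc-annulus in $\F(D)$. For essentiality, if the chosen c-circle were inessential in its page, it would bound a sub-disc in that page disjoint from $D$, and compressing $D$ across this sub-disc (in the spirit of the proof of Theorem~\ref{theorem:weak}) would strictly reduce the number of c-circles of $\F(D)$ while preserving $sl$, $\chi$, and $e_-$. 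Since hypothesis (2) requires the existence of c-circles, iteration of this compression must terminate at an essential innermost c-circle. Proposition~\ref{theorem:otc-circle} then gives that $\phi$ is non-right-veering.

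The main obstacle is the degeneracy step: making precise the bb-tile combinatorics under the rigid count $e_-=1$, $h_-=0$. A cleaner alternative route would try to bypass Proposition~\ref{theorem:otc-circle} entirely and work directly with the unique negative elliptic point $v$: since $h_-=0$, each adjacent positive hyperbolic rotates the b-arc at $v$ strictly to the right as $t$ increases (as in the proof of Lemma~\ref{lemma:estimate}), so $\phi(b_0) > b_0$ near $v$; the role of the c-circles would then be to guarantee that some $b_0$ at $v$ is essential in $S$, after which Lemma~\ref{lemma:fracDehn} would give $c(\phi, C) < 0$ where $C$ is the binding component containing $v$.
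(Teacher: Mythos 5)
Your overall strategy is the paper's: use Proposition~\ref{sl-formula-1} to get $e_+=h_+$ and $h_-=0$, locate a degenerated bc-annulus with essential c-circles, and invoke Proposition~\ref{theorem:otc-circle}. The sign computation and the observation that the region inside an innermost c-circle adjacent to it must be a bc-annulus are both correct. But two steps are genuinely missing. First, the degeneracy of the bc-annulus: you acknowledge this yourself, and the sketch (``an inductive innermost-b-arc analysis \dots would force a degenerated tile; propagating back \dots yields a degenerated bc-annulus'') is not an argument --- forcing a degenerated \emph{bb-tile} somewhere inside $\Delta\setminus R$ would not produce a degenerated \emph{bc-annulus}, and no mechanism for the ``propagation'' is given. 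The paper instead deduces directly from $e_-=1$ together with the presence of c-circles that the region decomposition consists of exactly one degenerated bc-annulus, one (possibly degenerated) ac-annulus, and aa-tiles.

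Second, and more seriously, your essentiality argument fails. Compressing $D$ across a page-disc bounded by an inessential c-circle does preserve $sl$, $\chi$ and (since the identity $2e_- = 2h_-+2$ forces $e_-\geq 1$ for any such disc) the count $e_-$, but it does \emph{not} preserve hypothesis (2): if every c-circle is inessential the iteration can simply terminate at a disc with no c-circles at all, and then nothing is concluded. The assertion that the process ``must terminate at an essential innermost c-circle'' is exactly what needs to be proved and is not justified by hypothesis (2), which concerns the original $D$ only. The paper's essentiality proof is entirely different and does not modify $D$: since both hyperbolic points bounding the c-circle family in time are positive, an inessential family $\{c_t\}_{t_0<t<t_1}$ would have to bound a page-disc on its \emph{left} just after birth at $t_0$ and on its \emph{right} just before death at $t_1$, which is impossible for a continuous family of oriented circles. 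Your closing ``alternative route'' does not avoid these issues either: to apply Lemma~\ref{lemma:fracDehn} at the unique negative elliptic point you still need its b-arc to be essential in $S$, and in the paper that is precisely what the essentiality of the c-circles in the degenerated bc-annulus delivers (Claim~\ref{claim:v}).
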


\begin{proof}
By (2) the region decomposition of $D$ contains at least one ac-annulus, or cc-pants.  
By the condition ($\mathcal F$ ii) of Definition~\ref{def of OBF} we first note that:
\begin{equation}\label{c}
\mbox{
A c-circle cannot be the boundary of the disc } D. 
\end{equation}
First, suppose that $D$ contains a pair of cc-pants. 
An Euler characteristic argument and (\ref{c}) imply that there must exist at least two bc-annuli that fill in two holes of the cc-pants. 
Next, suppose that $D$ contains an ac-annulus and no cc-pants. Again by an Euler characteristic argument and (\ref{c}) the ac-annulus must be glued to a bc-annulus. 
Therefore, in either case the region decomposition of $D$ contains a bc-annulus, which we denote by $R$.

Assume that $R$ is non-degenerate. 
Let $D_c$ (resp. $D_b$) be the sub-disc of $D$ bounded by the c-circle boundary (resp. the union of the two boundary b-arcs) of $R$. 
If $D_{c} \subset D_b$ then the region decomposition of $D_c$ must contain another bc-annulus.
If $D_b \subset D_c$ then the region decomposition of $D_b$ must contain a bb-tile or a bc-annulus. 
In either case $D$ has more than one negative elliptic points which contradicts (3).

Hence the region decomposition of $D$ contains one degenerate bc-annulus $R$.  
By a similar argument we can prove that $D\setminus R$ contains one (possibly degenerate) ac-annulus (see Figure~\ref{pic of D}). 
To apply Proposition \ref{theorem:otc-circle} we show the following: 
\begin{figure}[htbp]
  \begin{center}
  \SetLabels
(0.3*0.58)   $v$\\  
(0.4*.3) ${\sf h}_{t_1}$\\ 
(0.65*.43) ${\sf h}_{t_0}$\\
  \endSetLabels
  \strut\AffixLabels{\includegraphics[width=70mm]{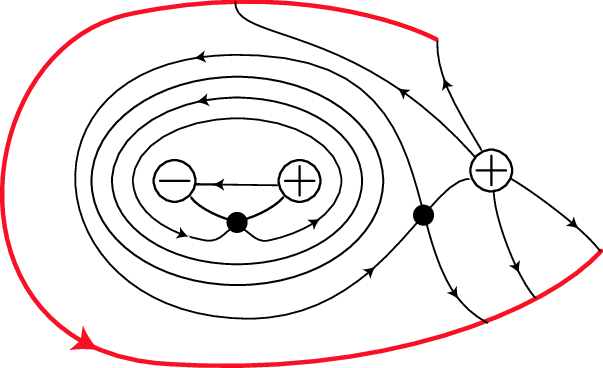}}
  \caption{An ac-annulus surrounds a degenerate bc-annulus.}
  \label{pic of D}
  \end{center}
\end{figure}

\begin{claim}\label{claim:c-essential}
The c-circles in $\F(D)$ are essential.
\end{claim}

\begin{proof} 
We may assume that the hyperbolic point of the ac-annulus (resp. $R$) lives in the page $S_{t_0}$ (resp. $S_{t_1}$) for some $t=t_0$ (resp. $t_1$) and we call it ${\sf h}_{t_0}$ (resp. ${\sf h}_{t_1}$). 
Proposition~\ref{sl-formula-1} and the assumptions (1) and (3) imply that $e_+(\F(D))=h_+(\F(D))$ and $h_-(\F(D))=0$. 
Therefore $\sgn({\sf h}_{t_0})=\sgn({\sf h}_{t_1})=+1$.

Suppose on the contrary that the family of c-circles $c_t \subset (D \cap S_t)$ for $t_0 < t<t_1$ are inessential.
Since $\sgn({\sf h}_{t_0})=+1$ for a very small $\e>0$ the circle $c_{t_0+\e}$ bounds a disc in $S_{t_0+\e}$ on its left side with respect to the orientation of $c_{t_0+\e}$ (see the top row of Figure~\ref{fig:c-circle}).  
On the other hand, since $\sgn({\sf h}_{t_1})=+1$ the circle $c_{t_1-\e}$ bounds a disc in $S_{t_1-\e}$ on the right side of $c_{t_1-\e}$ (see the bottom row of Figure~\ref{fig:c-circle}).
Since $\{ c_t \: | \: t_0 < t<t_1 \}$ is a continuous family of oriented circles they can bound discs only on the same side, we get a contradiction. 
\begin{figure}[htbp]
\begin{center}
\SetLabels
(.1*.93) $t=t_0-\e$\\
(.7*.93) $t=t_0+\e$\\
(.1*.01) $t=t_1-\e$\\
(.8*.01) $t=t_1+\e$\\
\endSetLabels
\strut\AffixLabels{\includegraphics*[width=90mm]{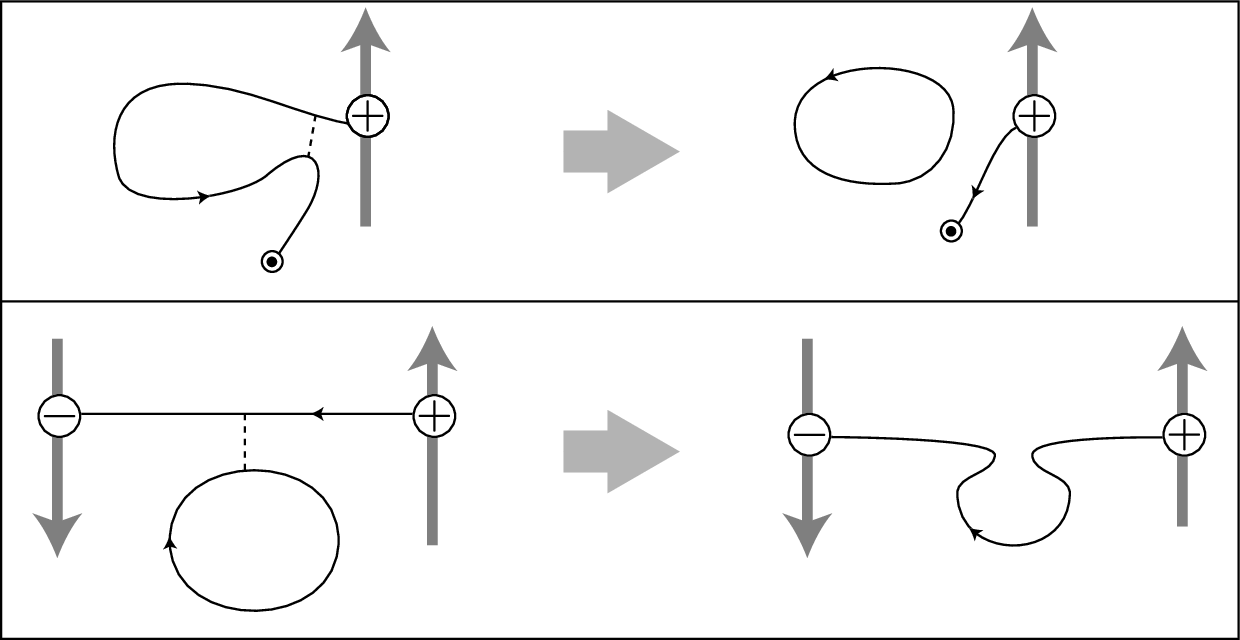}}
\end{center}
\caption{}
\label{fig:c-circle}
\end{figure}

\end{proof}

Claim~\ref{claim:c-essential} and Proposition~\ref{theorem:otc-circle} imply that $\phi$ is not right-veering.
\end{proof}


\section{Topology of open book manifolds}
\label{sec:topol}

In this section we study topology of the $3$-manifolds  $M = M_{(S, \phi)}$. 

\subsection{Incompressible surfaces and $c(\phi, C)$}

\label{sec:topol-surface}
In the following two theorems we establish estimates of $c(\phi, C)$ from topological data on incompressible closed surfaces in $M$.

\begin{theorem}[General case]
\label{theorem:surface}
Suppose that there exists a closed, oriented, incompressible, genus $g$ surface $F$ in $M$ which admits an essential open book foliation and intersects the binding in $2n$ $(n>0)$ points. 
\begin{enumerate}
\item If $g=0$ then $|c(\phi,C)| \leq 3$ for some boundary component $C$.
\item If $g \geq 1$ then $|c(\phi,C)| \leq 4 + \lfloor \frac{4g-4}{n} \rfloor$ for some boundary component $C$.
\end{enumerate}
\end{theorem}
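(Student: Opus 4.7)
The plan is to combine the Euler-characteristic formula for open book foliations with the single-vertex estimate of Lemma~\ref{lemma:estimate}, via a pigeonhole argument on the valences of the elliptic points.

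First I would set up the counting data. Since $F$ is closed and no braid is present (so $L=\emptyset$), the notions of essential and strongly essential b-arcs coincide by Definitions~\ref{def of essential} and~\ref{def:str-ess}; hence every elliptic point of $\F(F)$ is automatically strongly essential. Moreover $\partial F=\emptyset$ forces $\F(F)$ to contain no $a$-arcs, so the ``no a-arcs around $v$'' hypothesis of Lemma~\ref{lemma:estimate} holds at every elliptic point. By assumption $e_{+}+e_{-}=2n$, and Proposition~\ref{sl-formula-1}(2) gives
\[
h_{+}+h_{-} \;=\; (e_{+}+e_{-}) - \chi(F) \;=\; 2n - (2-2g) \;=\; 2n+2g-2.
\]

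Next I would do the incidence count via Proposition~\ref{prop:region}. With no $a$-arcs, every region of the decomposition of $F$ is a $bb$-tile, a $bc$-annulus, or a $cc$-pants. Each hyperbolic singularity has four separatrices: in a $bb$-tile all four end at elliptic points, in a $bc$-annulus exactly two do, and in a $cc$-pants none do. For each elliptic point $v$ let $\sigma(v)$ be the number of singular leaves ending at $v$, so the double count yields
\[
\sum_{v}\sigma(v) \;=\; 4h_{bb} + 2h_{bc} \;\le\; 4(h_{+}+h_{-}) \;=\; 8n+8g-8,
\]
where $h_{bb}$, $h_{bc}$ denote the numbers of $bb$-tiles and $bc$-annuli. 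If $p_v$, $n_v$ are as in Lemma~\ref{lemma:estimate}, then $p_v+n_v\le\sigma(v)$, so the pigeonhole principle applied to the $2n$ elliptic points of $\F(F)$ produces an elliptic point $v_{*}$, lying on some boundary component $C_{*}\subset\partial S$, with
\[
p_{v_{*}} + n_{v_{*}} \;\le\; \left\lfloor \frac{8n+8g-8}{2n} \right\rfloor \;=\; \left\lfloor 4 + \frac{4g-4}{n} \right\rfloor.
\]

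Finally I would apply Lemma~\ref{lemma:estimate} at $v_{*}$: independently of the sign of $v_{*}$ the lemma gives $|c(\phi,C_{*})|\le\max(p_{v_{*}},n_{v_{*}})\le p_{v_{*}}+n_{v_{*}}$. For $g\ge 1$ the right-hand side equals $4+\lfloor (4g-4)/n\rfloor$, yielding~(2). For $g=0$ it equals $\lfloor 4-4/n\rfloor$, which is at most $3$ for every $n\ge 1$, yielding~(1). The main obstacle I anticipate is the bookkeeping of incidences in the presence of degenerate regions: when several ``corners'' of a tile or annulus are identified as a single elliptic point, one must be sure the count of distinct hyperbolic points around that elliptic point is still bounded by the number of separatrix ends landing on it. This is precisely what the inequality $p_v+n_v\le\sigma(v)$ encodes, so the double count above is robust to degenerations. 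A secondary subtlety is verifying that ``essential'' upgrades to ``strongly essential'' in this setting; this is automatic from Definition~\ref{def of essential} because $L=\emptyset$.
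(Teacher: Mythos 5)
Your argument is correct, and it reaches the stated bounds by a genuinely different and more economical route than the paper. The paper first builds an auxiliary cellular decomposition of $F$: it replaces each cc-pants by three bc-annuli (introducing \emph{fake} elliptic and hyperbolic points), pairs bc-annuli along their c-circle boundaries and cuts them into quadrilateral be-tiles, and then exploits the Euler characteristic identity $\sum_{i\geq 1}(4-i)V(i)=4\chi(F)$ for a decomposition into quadrilaterals, together with a claim that fake vertices have valence $6$ and that $\Hyp(v)\leq \Val(v)$ for non-fake vertices. You instead double-count separatrix ends directly: each hyperbolic point contributes at most $4$ ends landing on elliptic points ($4$ for a bb-tile, $2$ for a bc-annulus, $0$ for a cc-pants), and Proposition~\ref{sl-formula-1}(2) gives $h_{+}+h_{-}=2n+2g-2$, so pigeonhole over the $2n$ elliptic points produces $v_{*}$ with $p_{v_{*}}+n_{v_{*}}\leq \lfloor 4+\frac{4g-4}{n}\rfloor$, and Lemma~\ref{lemma:estimate} (whose hypotheses you correctly verify: no a-arcs since $\partial F=\emptyset$, and essential $=$ strongly essential since $L=\emptyset$) finishes both cases. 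Your count is robust to degenerate regions for exactly the reason you give: a hyperbolic point connected to $v$ contributes at least one separatrix end at $v$, so the number of distinct hyperbolic points around $v$ is bounded by the number of ends. What the paper's heavier construction buys is reusability: the same be-tile/Euler-characteristic machinery is recycled essentially verbatim in Theorem~\ref{thm:tight-atoroidal} and Theorem~\ref{theorem:genus}, where one needs to track signs of hyperbolic points per $2$-cell and to handle surfaces with boundary (a-arcs, d-edges); your pigeonhole on separatrices is cleaner for Theorem~\ref{theorem:surface} itself but would need to be redone for those refinements.
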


If $S$ has connected boundary we get even sharper estimates: 

\begin{theorem}[Connected binding case]
\label{theorem:surface_connected}
Under the same setting of Theorem~\ref{theorem:surface}, 
assume further that $\partial S$ is connected.
Then
\[ |c(\phi, \partial S) | \leq \inf_{m \in \mathbb N} \mathcal G(m)\] 
where $\mathcal G: \mathbb{N} \rightarrow \Q$ is a map defined by:
\[ 
\mathcal G(m)=  \left\{ 
\begin{array}{ll}
 \frac{1}{m}\left\lceil \frac{(g-1+n)m}{n} - \frac{(n-1)^{2}}{4 n^{2}}  \right\rceil
& ( n \textrm{: odd}) \\
& \\
 \frac{1}{m} \left\lceil \frac{(g-1+n)m}{n} - \frac{n-2}{4n}  \right\rceil 
 & ( n \textrm{: even})
 \end{array}
 \right.
\]
In particular: 
\begin{enumerate}
\item If $g=0$ we have $|c(\phi,\partial S)| \leq 1.$
\item If $g\geq 1$ we have $|c(\phi,\partial S)| \leq g.$
\end{enumerate}
\end{theorem}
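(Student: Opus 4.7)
The plan is to put $F$ in essential open book foliation position, split its elliptic points by sign along the unique boundary component $C=\partial S$, and then apply the multi-vertex estimate Theorem~\ref{theorem:estimate} to each half before combining the resulting four inequalities by a pigeonhole argument.

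First, I would invoke Theorem~\ref{theorem:weak} to isotope $F$ so that $\F(F)$ is essential. Since $F$ is closed and no braid is involved ($L=\emptyset$), $\F(F)$ has no a-arcs, and the complement $S_t\setminus(S_t\cap L)$ reduces to the whole fiber $S_t$; consequently every essential b-arc is automatically strongly essential, and hence every elliptic point of $\F(F)$ is strongly essential (Definition~\ref{def:str-ess}). Because $\partial S$ is connected, all $e_++e_-=2n$ elliptic points lie on $C$. A local sign analysis along b-arcs (the two endpoints of any b-arc carry opposite signs, since the $\partial_t$-flow enters $F$ at one endpoint and exits at the other) forces $e_+=e_-=n$. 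Proposition~\ref{sl-formula-1}(2) then gives $h_++h_-=2n-\chi(F)=2(g-1+n)$.

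Next, let $V^\pm$ denote the size-$n$ sets of positive and negative elliptic points on $C$. For $\bullet\in\{+,-\}$, let $N^{(\bullet)}$ (resp.\ $P^{(\bullet)}$) be the number of negative (resp.\ positive) hyperbolic points of $\F(F)$ connected to some vertex of $V^\bullet$ by a singular leaf. Applying Theorem~\ref{theorem:estimate}(1) to $V^-$ and Theorem~\ref{theorem:estimate}(2) to $V^+$ produces four inequalities: upper bounds $c(\phi,C)\leq \inf_m f_-^{N^{(-)}}(m)$ and $c(\phi,C)\leq \inf_m f_+^{P^{(+)}}(m)$, together with matching lower bounds $c(\phi,C)\geq -\inf_m f_+^{P^{(-)}}(m)$ and $c(\phi,C)\geq -\inf_m f_-^{N^{(+)}}(m)$. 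Here $f_\pm^k(m)$ denotes $f_\pm(m)$ of Theorem~\ref{theorem:estimate} with $N$ or $P$ replaced by $k$; note that $f_\pm^k(m)$ is nondecreasing in $k$.

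The key pigeonhole: $N^{(-)}$ is a sub-count of $h_-$ and $P^{(+)}$ of $h_+$, so $N^{(-)}+P^{(+)}\leq h_-+h_+=2(g-1+n)$, whence $\min(N^{(-)},P^{(+)})\leq g-1+n$. By monotonicity, the sharper of the two upper bounds is at most $\inf_m f_\pm^{g-1+n}(m)=\inf_m \mathcal G(m)$. Symmetrically, $\min(N^{(+)},P^{(-)})\leq g-1+n$ yields the matching lower bound $-\inf_m \mathcal G(m)\leq c(\phi,C)$. Combining, $|c(\phi,C)|\leq \inf_m \mathcal G(m)$. Items (1) and (2) then follow by evaluating $\mathcal G$ at a suitable $m$: taking $m=n$ gives $\mathcal G(n)=(n-1)/n\leq 1$ when $g=0$, while taking $m=1$ gives $\mathcal G(1)\leq g$ when $g\geq 1$ (a direct check using $(g-1+n)/n-O(1/n)\leq g$ for $n\geq 1$).

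The main obstacle will be the sign-balancing step $e_+=e_-=n$, which requires care with the orientation conventions for $F$ and for the binding; it is essential because it guarantees that each application of Theorem~\ref{theorem:estimate} uses exactly $n$ vertices, which is precisely what makes the denominator $n$ in $\mathcal G(m)$ match the hypothesis $|F\cap B|=2n$. The remainder of the argument, including the branched-cover and twisting-number machinery from the proof of Theorem~\ref{theorem:estimate}, is reused without modification.
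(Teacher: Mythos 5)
Your proposal is correct and follows essentially the same route as the paper: reduce to Theorem~\ref{theorem:estimate} applied to the $n$ positive and the $n$ negative elliptic points on the single boundary component, then use $h_++h_-=2(g-1+n)$ (from Proposition~\ref{sl-formula-1}(2)) to conclude that the smaller of the two relevant hyperbolic counts is at most $g-1+n$. The only cosmetic difference is that the paper obtains $e_+=e_-=n$ homologically (the binding is null-homologous, Observation~\ref{obs}(3)) rather than via the opposite-sign-endpoints property of b-arcs, and it leaves the evaluations $\mathcal G(n)\leq 1$ and $\mathcal G(1)\leq g$ implicit, which you check explicitly.
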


Once we have Theorems~\ref{theorem:surface} and \ref{theorem:surface_connected} in hand, we give criteria for irreducible and atoroidal manifolds. 

\begin{corollary}
\label{irreducible-theorem}
Assume that;
\begin{enumerate}
\item $|c(\phi,C)| > 3$ for every boundary component $C$ of $S$, or 
\item $\partial S$ is connected and $|c(\phi,\partial S)| > 1$.
\end{enumerate} 
Then the 3-manifold $M=M_{(S, \phi)}$ is irreducible. 
\end{corollary}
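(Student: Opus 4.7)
The plan is to argue by contradiction: assume $M = M_{(S,\phi)}$ contains an embedded $2$-sphere $F$ that does not bound a $3$-ball, and then extract a violation of the twist coefficient hypothesis from the existence of $F$. First I would dispose of the degenerate page types. If $S$ is a disc, then $M = S^3$ is already irreducible, so nothing is to prove. If $S$ is an annulus, then $\partial S$ has two components (ruling out hypothesis (2)) and under hypothesis (1) the monodromy is a power of a boundary Dehn twist with $|c(\phi,C)|$ a positive integer greater than $3$; here $M$ is a lens space, hence irreducible. From here on I may therefore assume $\chi(S) < 0$.

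Next, because $F$ does not bound a ball it qualifies as incompressible in the sense required by Theorem \ref{theorem:weak}, so I apply that theorem with $L = \emptyset$ to isotope $F$ into a position admitting an essential open book foliation $\F(F)$. The Euler characteristic identity of Proposition \ref{sl-formula-1}(2) gives
\[
e_+(\F(F)) + e_-(\F(F)) \;=\; 2 + h_+(\F(F)) + h_-(\F(F)) \;\geq\; 2,
\]
so $F$ necessarily meets the binding of $(S,\phi)$ in $2n$ points with $n \geq 1$. Thus $F$ is a closed, oriented, incompressible, genus $0$ surface satisfying the numerical hypothesis of Theorem \ref{theorem:surface} (and of Theorem \ref{theorem:surface_connected} when $\partial S$ is connected) with $g = 0$.

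Finally, under hypothesis (1), Theorem \ref{theorem:surface}(1) applied to $F$ yields $|c(\phi,C)| \leq 3$ for some boundary component $C$, contradicting the standing assumption $|c(\phi,C)| > 3$ for every $C$. Under hypothesis (2), Theorem \ref{theorem:surface_connected}(1) yields $|c(\phi,\partial S)| \leq 1$, contradicting $|c(\phi,\partial S)| > 1$. Either way we reach a contradiction, so no essential sphere exists and $M$ is irreducible. The main point to verify carefully is the invocation of Theorem \ref{theorem:weak} on the essential sphere $F$: one must confirm that $F$ is not isotopic to a leaf of the auxiliary taut foliation on the binding complement used in its proof. This is automatic because the taut foliation built on the fibered manifold with fibre of negative Euler characteristic carries no sphere leaves, but it deserves explicit mention alongside the disc and annulus exclusions handled above.
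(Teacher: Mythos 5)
Your argument is correct and follows the same route as the paper: an essential sphere is incompressible, so Theorem \ref{theorem:weak} puts it in essential open book foliation position, the Euler characteristic formula of Proposition \ref{sl-formula-1}(2) forces it to meet the binding, and Theorems \ref{theorem:surface}(1) and \ref{theorem:surface_connected}(1) with $g=0$ then contradict the hypothesis on $|c(\phi,C)|$. Your extra care with the disc/annulus pages and with the ``not isotopic to a leaf'' hypothesis of Roussarie--Thurston is a reasonable tightening of the same proof, not a different one.
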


\begin{proof}
Assume that $M$ is reducible. 
Then by Theorem~\ref{theorem:weak-sphere}, there exists an essential sphere $\mathcal S$ that admits an essential open book foliation. Since $(e_+ + e_-) - (h_+ + h_-)=\chi(\mathcal S)=2$ we know that $\F(\mathcal S)$ has elliptic points, i.e., $\mathcal S$ intersects the binding. 
Now Theorem~\ref{theorem:surface}-(1) together with  Theorem~\ref{theorem:surface_connected}-(1) yields the contrapositive of the statement of Corollary~\ref{irreducible-theorem}. 
\end{proof}

\begin{corollary}[First atoroidality criterion]
\label{atoroidal-theorem} 

Assume that $\phi \in \Aut(S, \partial S)$ is of irreducible  type and that;
\begin{enumerate}
\item $|c(\phi,C)| > 4$ for every boundary component $C$ of $S$, or
\item $\partial S$ is connected and $|c(\phi,\partial S)| > 1$.
\end{enumerate} 
Then the 3-manifold $M=M_{(S, \phi)}$ is irreducible and atoroidal.
\end{corollary}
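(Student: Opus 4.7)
The plan is to treat irreducibility and atoroidality separately. Irreducibility is immediate from Corollary~\ref{irreducible-theorem}: condition (1) here, $|c(\phi,C)|>4$ for every $C$, strictly implies $|c(\phi,C)|>3$, and condition (2) is identical to the one in Corollary~\ref{irreducible-theorem}. So the content of the statement is the atoroidality.

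For atoroidality, I would argue by contradiction: suppose $T\subset M$ is an essential torus. By Theorem~\ref{theorem:weak}, isotope $T$ so that it carries an essential open book foliation $\F(T)$, and split on whether $T$ meets the binding $B$. If $T\cap B\neq\emptyset$, so that $\F(T)$ has $2n>0$ elliptic points, then applying Theorem~\ref{theorem:surface}(2) with $g=1$ gives $|c(\phi,C)|\leq 4+\lfloor 0/n\rfloor=4$ for some boundary component $C$, contradicting hypothesis (1); under hypothesis (2) the sharper Theorem~\ref{theorem:surface_connected}(2) with $g=1$ gives $|c(\phi,\partial S)|\leq 1$, contradicting (2).

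If instead $T\cap B=\emptyset$, then $T$ lies in the interior of the mapping torus $M_\phi\subset M$ and is incompressible there, since any compressing disc for $T$ in $M_\phi$ is a compressing disc in $M$. Here the assumption that $\phi$ is of irreducible Nielsen--Thurston type enters. If $\phi$ is pseudo-Anosov, Thurston's hyperbolization of fibered 3-manifolds makes $\mathrm{int}(M_\phi)$ a complete hyperbolic manifold of finite volume, hence atoroidal, so the only incompressible tori in $M_\phi$ are parallel into $\partial M_\phi$; such a torus bounds one of the solid tori attached along the binding to form $M$ and is therefore compressible in $M$, contradiction. If $\phi$ is periodic, which in the paper's trichotomy convention excludes the reducible case, then $\phi$ admits no invariant essential multicurve on $S$, and the structure theory of Seifert-fibered mapping tori rules out essential tori in $\mathrm{int}(M_\phi)$: a vertical essential torus would project to an essential loop in the base orbifold whose preimage is a $\phi$-invariant multicurve, and a horizontal essential torus would yield a secondary fibration whose intersection with a fiber $S$ produces such a multicurve, each contradicting irreducibility of $\phi$.

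The main obstacle is the periodic subcase, which needs a careful use of the trichotomy plus Seifert-fibered structure theory to rule out both vertical and horizontal essential tori; in degenerate low-complexity settings (for instance $S$ a disc or annulus, where every $\phi$ is periodic) one instead verifies directly that the strong twist-coefficient bound forces $M$ to be a lens space or small Seifert manifold, hence atoroidal. Once Case~B is handled, combining with Case~A closes the contradiction and gives atoroidality.
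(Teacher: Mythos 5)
Your reduction to Corollary~\ref{irreducible-theorem} for irreducibility, and your treatment of the case where the torus meets the binding, coincide with the paper's proof: isotope $T$ into essential open book foliation position via Theorem~\ref{theorem:weak}, then Theorem~\ref{theorem:surface}(2) with $g=1$ (resp.\ Theorem~\ref{theorem:surface_connected} with $g=1$) contradicts the hypothesis. The divergence, and the problem, is in the case $T\cap B=\emptyset$.

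There the paper's argument is short and purely foliation-theoretic: with no elliptic points, $0=\chi(T)=(e_++e_-)-(h_++h_-)$ forces $h_\pm=0$, so $\F(T)$ consists entirely of c-circles; hence $T\cap S_t$ is an isotopically constant multicurve and $\phi(T\cap S_1)=T\cap S_0\simeq T\cap S_1$, i.e.\ $\phi$ preserves an essential multicurve, contradicting irreducibility. You replace this with Thurston hyperbolization (pseudo-Anosov subcase) and Seifert-fibered structure theory (periodic subcase). The pseudo-Anosov subcase goes through, but the periodic subcase contains a false step that you cannot repair along the route you propose: it is \emph{not} true that a periodic mapping class ``admits no invariant essential multicurve on $S$.'' Periodicity in the Nielsen--Thurston trichotomy means freely isotopic to a finite-order map, and such maps (the identity, hyperelliptic-type involutions, boundary-twist compositions freely isotopic to these) preserve plenty of essential multicurves; only pseudo-Anosov maps preserve none. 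Consequently your plan to ``rule out'' vertical essential tori in the Seifert-fibered mapping torus fails: for such $\phi$ the vertical torus genuinely exists and is incompressible, and the only way to reach a contradiction is to take ``irreducible type'' in the sense the paper's own proof uses it, namely that $\phi$ fixes no isotopy class of essential multicurve --- at which point the Seifert-fibered analysis is superfluous and the contradiction is immediate. Your horizontal-torus discussion is likewise only a sketch (one must control the orbifold Euler characteristic of the base to see when horizontal tori can exist at all), and you yourself flag this subcase as unresolved. The fix is to drop the geometric case analysis entirely and run the paper's Euler-characteristic argument on $\F(T)$, which produces the invariant multicurve uniformly, with no appeal to the Nielsen--Thurston type of $\phi$ beyond the final contradiction.
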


\begin{remark-unnumbered}
See Theorem~\ref{thm:tight-atoroidal} below for a related result. 
\end{remark-unnumbered}

\begin{proof} 
By Corollary~\ref{irreducible-theorem} we know that $M$ is irreducible so it remains to show that $M$ is atoroidal.
Assume contrary that $M$ contains an incompressible torus $T$. 
By Theorem~\ref{theorem:weak} we may assume that $T$ admits an essential open book foliation. 
Theorems \ref{theorem:surface}-(2) and \ref{theorem:surface_connected}-(2) guarantee that $T$ does not intersect the binding, i.e., $\F(T)$ contains no elliptic points. 
Since $(e_+ + e_-) - (h_+ + h_-)=\chi(T)=0$, $\F(T)$ contains no hyperbolic points, that is, all the leaves of $\F(T)$ are c-circles. 
Hence the sets of c-circles $T\cap S_1$ and $T\cap S_0$ are isotopic through the continuous family $T \cap S_t$.

On the other hand the monodromy imposes $\phi(T \cap S_{1}) = T \cap S_{0}$.  
Therefore $\phi(T\cap S_1)=T\cap S_1$ which contradicts the assumption that $\phi$ is of irreducible.
\end{proof}

\begin{remark-unnumbered}

Corollary~\ref{irreducible-theorem} is valid regardless of the type of $\phi$. 
However for pseudo-Anosov case \cite[Theorem 5.3, Corollary 5.4]{go} of Gabai and Oertel yield the following stronger result.  
They view $M_{(S, \phi)}$ as a Dehn filling of the mapping torus of $\phi$ and use essential lamination theory. 
\end{remark-unnumbered}

\begin{theorem}[Gabai, Oertel \cite{go}]\label{theorem:GO}
Let $\phi$ be of pseudo-Anosov. 
For each boundary component $C_i \subset \partial S$ put $c(\phi, C_i) = \frac{p_i}{q_i}$, where $q_i$ is the number of  prongs of the (un)stable lamination around $C_i$. Note that $p_i$ and $q_i$ may not be coprime. If $|p_i|>1$ for all the boundary components $C_i$ then the suspension of the (un)stable lamination of $\phi$ gives an essential lamination in $M_{(S,\phi)}$, hence $M_{(S,\phi)}$ is an irreducible manifold with infinite fundamental group. 
\end{theorem}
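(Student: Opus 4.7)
The plan is to follow Gabai--Oertel's essential lamination framework and reduce the statement to an application of their Dehn filling theorem for essential laminations (\cite[Theorem~6.1]{go}). The underlying 3-manifold $M_{(S,\phi)}$ arises from the mapping torus $M_\phi$ by filling in the $d$ boundary tori $T_i = C_i \times S^1$ along the meridional slope determined by the open book structure, so the strategy is to construct an essential lamination on $M_\phi$ that survives this Dehn filling precisely when $|p_i|>1$ for every $i$.

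First, I would suspend the stable (equivalently, unstable) measured geodesic lamination $L$ of the pseudo-Anosov representative $\Phi \simeq \phi$ to obtain a 2-dimensional lamination $\Lambda$ on $M_\phi$, splitting open along singular leaves so that $\Lambda$ becomes an essential lamination in $M_\phi$ in the sense of \cite{go}. Next I would analyze how $\Lambda$ meets each boundary torus $T_i$. Each complementary region of $L$ touching $C_i$ is a crown $W_i$ with $q_i = m$ cusps, and the boundary pattern of $\Lambda\cap T_i$ consists of $q_i$ parallel essential circles whose slope, computed in the natural framing coming from the mapping torus structure, is exactly the \emph{degeneracy slope} $s_i$ of $T_i$. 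The key computation is that, in the Dehn surgery basis $(\mu_i,\lambda_i)$ where $\mu_i$ is the meridian of the solid torus glued to create the binding and $\lambda_i$ is the longitude from the page, the degeneracy slope has the form $s_i = q_i \mu_i - p_i \lambda_i$ (up to sign/convention), so the geometric intersection number between the filling slope $\mu_i$ and $s_i$ equals $|p_i|$.

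By \cite[Theorem~6.1]{go}, if the filling slope on each $T_i$ is not the degeneracy slope \emph{and} intersects it more than once (equivalently $|p_i|\geq 2$, i.e.\ $|p_i|>1$), then the suspended lamination remains essential after capping off each $T_i$ with a solid torus. This produces an essential lamination in $M=M_{(S,\phi)}$, and the standard consequences of \cite[Corollary~6.4 and Theorem~6.1]{go} (or \cite[Theorem~2.1]{gk}) yield irreducibility of $M$ and infiniteness of $\pi_1(M)$.

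The main obstacle is the bookkeeping that matches the Honda--Kazez--Mati\'c definition of $c(\phi,C_i)=p_i/q_i$ (via the free isotopy from $\phi$ to $\Phi$ tracked through the prong rotation) with Gabai--Oertel's degeneracy slope on $T_i$, since the two are defined in different languages and framings. Once that identification is made (essentially by reading the isotopy $H$ in Definition~\ref{def of c(phi,C)} as giving the slope of $\Lambda\cap T_i$ relative to the open book framing), the numerical hypothesis $|p_i|>1$ becomes exactly the hypothesis needed for \cite[Theorem~6.1]{go}, and the rest is an appeal to that machinery.
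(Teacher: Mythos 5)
Your proposal is correct and follows exactly the route the paper indicates: the paper offers no proof of its own for this statement, simply citing \cite[Theorem 5.3, Corollary 5.4]{go} and remarking that Gabai--Oertel view $M_{(S,\phi)}$ as a Dehn filling of the mapping torus and apply essential lamination theory, which is precisely your argument (suspend the invariant lamination, identify the degeneracy slope as $q_i\mu_i + p_i\lambda_i$ so that its intersection with the meridional filling slope is $|p_i|$, and invoke their Dehn filling criterion). The only quibbles are cosmetic: the degeneracy locus on $T_i$ consists of $\gcd(p_i,q_i)$ rather than $q_i$ parallel circles, and the relevant results in \cite{go} are Theorem 5.3 and Corollary 5.4 rather than Theorem 6.1 — neither affects the argument.
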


Before proving Theorems~\ref{theorem:surface} and \ref{theorem:surface_connected}, we list some observations:

\begin{observation}\label{obs} 
When $F$ is an incompressible closed surface in $M$, there are various convenient properties. 
\begin{enumerate}
\item 
Since $F$ has no boundary, every essential b-arc is strongly essential. In particular, every b-arc in an essential open book foliation is strongly essential. 

\item Since $\F(F)$ does not have a-arcs the region decomposition of $\F(F)$ consists only of bb-tile, bc-annulus or cc-pants.

\item Since $\F(F)$ does not have a-arcs we have $e_+(\F)=e_-(\F)$. 
\end{enumerate}
\end{observation}

Now we prove Theorem \ref{theorem:surface}.
We analyze the Euler characteristic and the region decomposition of a surface to find an elliptic point $v$ such that the number of hyperbolic points connected to $v$ by a singular leaf is small. 
Then we apply Lemma \ref{lemma:estimate} to get the desired estimate of $c(\phi, C)$. 
Our proof is highly motivated by \cite[Theorem 1.2]{i1}, a result in braid foliation theory. 
In fact, Theorem \ref{theorem:surface} can be seen as a generalization of \cite[Theorem 1.2]{i1}.

\begin{proof}[Proof of Theorem \ref{theorem:surface}]
Let $F$ be a surface satisfying the conditions in Theorem \ref{theorem:surface}. 
We construct a cellular decomposition of $F$ by modifying the region decomposition of $\F(F)$. 
To this end, we construct a singular foliation (but not an open book foliation), $\mF'$, on $F$ by replacing each cc-pants of $\F(F)$ with three bc-annuli as shown in Figure~ \ref{fig:modify-cc}:
\begin{figure}[htbp]
\begin{center}
\SetLabels
(1.1*.95) fake \\
(1.1*0.85) elliptic points\\
(1.15*0.2) fake hyperbolic\\
(1.1*0.1) points\\
(.2*-.1) cc-pants\\
(.8*-.1) three bc-annuli\\
 \endSetLabels
\strut\AffixLabels{\includegraphics*[width=90mm]{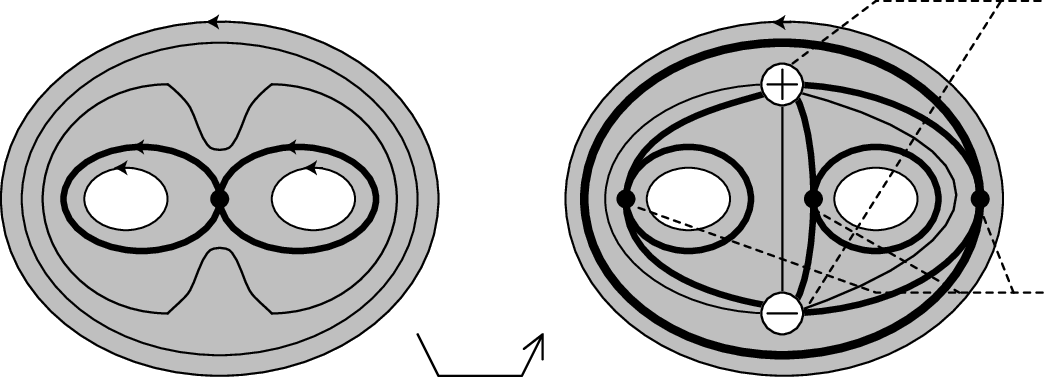}}
 \caption{Construction of a singular foliation $\mF'$.}
\label{fig:modify-cc}
\end{center}
\end{figure}
Though $\mF'$ is not derived from the intersection of the surface $F$ with the pages, by abuse of notations we keep using the terminologies of open book foliations,  such as region decomposition, bc-tile, bc-singular points, etc.
We call the newly-inserted elliptic points and hyperbolic points {\em fake} elliptic points and {\em fake} hyperbolic points, respectively. 

The signs of fake elliptic points are canonically determined by the orientation of the c-circles in the original cc-pants. However, the signs of elliptic and hyperbolic points (both fake and non-fake) are not used in the following argument so we may omit signs from now on.

The region decomposition of $\mF'$ consists only of bb-tiles and bc-annuli.
Using Birman and Menasco's idea \cite{BM7} we construct a cellular decomposition of $F$ as follows:  
bc-annului always exist in pair because the c-circle boundary of a bc-annulus is identified with the c-circle boundary of another bc-annulus.
Let $W$ be the annulus obtained by two bc-annuli glued along their c-circle boundaries (note: $W$ is a disc if one of the bc-annuli is degenerate; also ac-annuli do not exist because $F$ is closed).  
Each component of $\partial W$ has two elliptic points. 

We cut $W$ along two disjoint essential arcs connecting the elliptic points of $W$ to obtain two quadrilaterals.
Call such arcs {\em e-edges} and quadrilaterals {\em be-tiles}. 

The two elliptic points of an e-edge do not have to have opposite signs when one of its end point is a fake elliptic point. 
Although it is not necessary in this proof, for the later use we choose e-edges so that each be-tile contains exactly one hyperbolic point.

\begin{figure}[htbp]
\begin{center}
\SetLabels
(.04*.8) $W$\\
(0.62*0.4) e-edge \\
(0.91*0.4) e-edge \\
(.7*.7) be-tile\\
(.7*.07) be-tile\\
\endSetLabels
\strut\AffixLabels{\includegraphics*[scale=0.5, width=120mm]{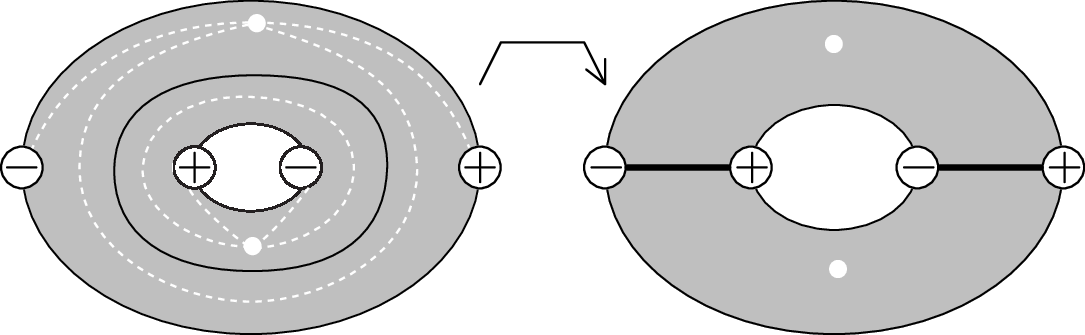}}
\caption{From bc-annuli to be-tiles. (cf. \cite[Fig 18]{BM7})}
\label{fig:be-tile}
\end{center}
\end{figure}

Now the surface $F$ is a union of bb-tiles and be-tiles.
This defines a cellular decomposition of $F$ whose  $0$-cells are the elliptic points and the fake elliptic points, $1$-cells are the boundary b-arcs and e-edges of the bb-tiles and the be-tiles, and $2$-cells are the bb-tiles and be-tiles.

Let $v$ be a $0$-cell and $\Val(v)$ denote the valence of $v$ in the $1$-skeleton graph. 
Let $\Hyp(v)$ be the number of hyperbolic points in the original foliation $\F(F)$ that are connected to $v$ by a singular leaf.

\begin{claim}
\label{claim:fake}
let $v$ be a $0$-cell of the cellular decomposition of $F$. 
\begin{enumerate}
\item If $v$ is a fake elliptic point of $\mF'$, then $\Val(v)=6$.
\item If $v$ is not a fake elliptic point of $\mF'$, then $\Hyp(v) \leq \Val(v)$.
\end{enumerate}
\end{claim}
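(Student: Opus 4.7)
I will prove both parts by a local combinatorial count at the elliptic point $v$, tracking contributions of the regions of $\mF'$ incident to $v$ together with the e-edges inserted by the pair-and-cut procedure.

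Let $a_v$ and $b_v$ denote the number of bb-tiles and of bc-annuli of $\mF'$ (where the latter includes both the original bc-annuli of $\F(F)$ and the three bc-annuli inserted in each cc-pants) that contain $v$ as a corner. The key step is to establish the identity
\[
\Val(v) = a_v + 2 b_v.
\]
First, each bc-annulus incident to $v$ is glued along its c-circle to a unique partner bc-annulus to form the annular region $W$; cutting $W$ by two e-edges deposits exactly one e-edge at each of the four elliptic points on $\partial W$, in particular one at $v$. This accounts for $b_v$ e-edges at $v$. For the b-arcs, a direct count that uses the fact that every b-arc of $\mF'$ is shared between exactly two adjacent regions shows that the number of distinct b-arc 1-cells at $v$ is $a_v + b_v$. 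Adding the two contributions yields the identity.

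For part (1), the construction recalled in Figure~\ref{fig:modify-cc} replaces each cc-pants by three bc-annuli sharing two fake elliptic points and joined by three b-arcs between them. Since the boundary of a cc-pants contains only c-circles, no bb-tile has a fake elliptic point as a corner, so $a_v=0$; the three new bc-annuli all meet $v$, so $b_v=3$. The identity gives $\Val(v)=6$.

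For part (2), a non-fake elliptic point $v$ also lies on the boundary of no cc-pants, so its incident regions of $\F(F)$ are exactly the $a_v$ bb-tiles and $b_v$ bc-annuli, each of which contains a unique hyperbolic point joined to $v$ by a singular separatrix. Thus $\Hyp(v)=a_v+b_v$, and the identity gives $\Hyp(v)=a_v+b_v\le a_v+2b_v=\Val(v)$, as required. The principal subtlety to watch throughout is the degenerate case where $v$ is a repeated corner of some region (for instance a bc-annulus whose two b-side corners coincide at $v$); the same count works when $a_v$ and $b_v$ are interpreted with the natural multiplicities, so no new ideas are needed but care is required in the bookkeeping.
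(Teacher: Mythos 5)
Your argument is correct and follows essentially the same line as the paper's: you count the tiles incident to $v$ and translate that count into the valence. The only presentational difference is that you package the count as the exact identity $\Val(v)=a_v+2b_v$, whereas the paper contents itself with the one-sided inequality $x+2y\le\Val(v)$ (after first disposing of the degenerated bc-annulus case separately); both routes rest on the same observations that each non-fake region incident to $v$ carries one hyperbolic point joined to $v$, that each incident bc-annulus contributes exactly one e-edge at $v$, and that the b-arc $1$-cells at $v$ are shared two-to-one between adjacent regions. Like the paper, you gesture at rather than fully spell out the degenerate-corner bookkeeping, but since the claim is an inequality this is harmless.
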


\begin{proof}[Proof of claim~\ref{claim:fake}]
(1) 
If $v$ is a fake elliptic point then $v$ sits on some cc-pants $P$ in the region decomposition. 
In $\mF'$, $P$ decomposes into three bc-annuli (Figure~\ref{fig:modify-cc}). 
After replacing the bc-annuli with be-tiles, at $v$ three b-arc $1$-cells and three e-edge $1$-cells meet, see Figure~\ref{fig:cc-be}. Thus $\Val(v)=6$. 

\begin{figure}[htbp]
 \begin{center}
\SetLabels
(-.05*.5) $P=$\\
 \endSetLabels
\strut\AffixLabels{\includegraphics*[scale=0.5, width=120mm]{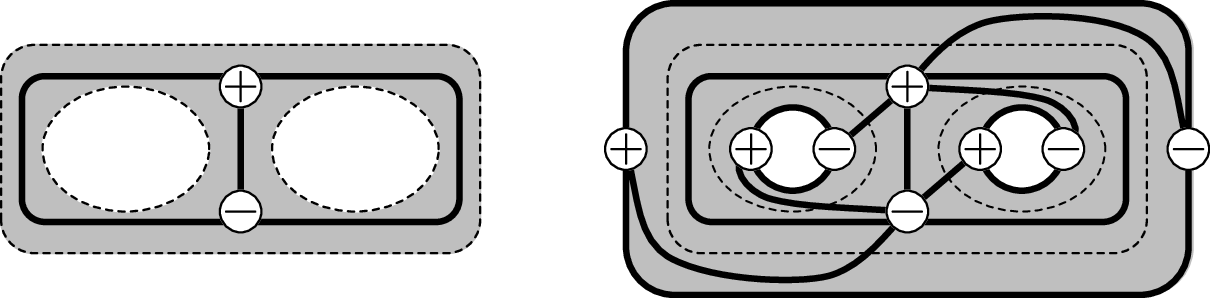}}
 \caption{A fake elliptic point has valence six.}
\label{fig:cc-be}
  \end{center}
\end{figure}

(2) 
We first note that a vertex $v$ is a non-fake elliptic point in $\mF'$ if and only if it is an elliptic point of the original foliation $\F(F)$.
If $v$ lies in the interior of a degenerate bc-annulus in $\F(F)$ then $\Hyp(v)=1 < 2 = \Val(v)$.

Next assume that $x$ $bb$-tiles and $y$ non-degenerate bc-annuli in $\F(F)$ meet at $v$.
Then $\Hyp(v) = x + y \leq x+2y = \Val(v)$. 
\end{proof}

Let us define
\[ s = \min\left\{\Hyp(v) \: | \: v \textrm{ is a 0-cell and a non-fake elliptic point  }\right\}.\]
Let $v$ be a vertex realizing $\Hyp(v)=s$. 
Suppose that $v$ lies on the binding component $C \subset \partial S$. 
Observation~\ref{obs}-(1) guarantees that $v$ is strongly essential. 
By Lemma~\ref{lemma:estimate} 
\[ -s\leq c(\phi,C) \leq s. \]
Our goal is to show that $s\leq3$ if $g=0$ and $s \leq 4 + \lfloor \frac{4g-4}{n} \rfloor$ if $g>0$.

Consider the cellular decomposition of $F$. 
For $i \geq 1$ let $V(i)$ be the number of $0$-cells of valence $i$, and let $E$ be the number of $1$-cells, and $R$ be the number of $2$-cells. Since each $1$-cell is a common boundary of distinct two $2$-cells and the boundary of each $2$-cell consists of four  distinct $1$-cells, we have:
\begin{equation}
\label{eqn:ER}
2E=4R. 
\end{equation}
Since the end points of a $1$-cell are distinct two $0$-cells we have:
\begin{equation}
\label{eqn:VE}
\sum_{i\geq 1} i V(i) = 2E. 
\end{equation}
The Euler characteristic of $F$ is:
\begin{equation}
\label{eqn:euler}
 \sum_{i\geq 1} V(i) - E + R = \chi(F) 
\end{equation}
From (\ref{eqn:ER}), (\ref{eqn:VE})  and (\ref{eqn:euler}), we get the {\em Euler characteristic equality}:
\begin{equation}
\label{eqn:ecequality}
 \sum_{i\geq 1}(4-i)V(i) = 4 \chi(F).  
\end{equation}

(1) 
First we assume that $F$ is a sphere. 
The equality (\ref{eqn:ecequality}) implies: 
\[ 3V(1) + 2V(2) + V(3) = 8 + \sum_{i \geq 4}(i-4)V(i) \]
The right hand side 
is positive.
So there exists a vertex  $v$ with $\Val(v) \leq 3$. 
Claim~\ref{claim:fake}-(1) implies that $v$ is not a fake vertex. 
By Claim \ref{claim:fake}-(2) we obtain
\[ |c(\phi, C)| \leq s \leq \Hyp(v) \leq \Val(v) \leq 3.\]

(2) 
Next we assume that $F$ has genus $g>0$ so $\chi(F)= 2-2g \leq 0$.
The Euler characteristic equality (\ref{eqn:ecequality}) gives:
\[ 0 \leq 3V(1)+2V(2) + V(3) + 8g -8 = \sum_{i \geq 4}(i-4)V(i). \]
If at least one of $V(1),V(2)$ and $V(3)$ is positive then by Claim~\ref{claim:fake} there exists a non-fake elliptic point $v$ such that 
$s \leq \Hyp(v) \leq \Val(v) \leq 3$.
Suppose that $V(1)=V(2)=V(3)=0$.
By Observation~\ref{obs}-(3) the original open book foliation $\F(F)$ contains an even number ($=2n$) of elliptic points. Therefore;
\[ 8g-8 = \sum_{i\geq 4}(i-4)V(i) \geq (s-4)2n, \]
i.e., 
$s \leq 4 + \frac{4g-4}{n}$. 
In either case since $s$ is an integer $s \leq 4 + \lfloor \frac{4g-4}{n} \rfloor$.
\end{proof}

Finally we prove Theorem \ref{theorem:surface_connected} by using Theorem~\ref{theorem:estimate} and Observation~\ref{obs}.

\begin{proof}[Proof of Theorem \ref{theorem:surface_connected}]
We see in Observation~\ref{obs}-(3) that $e_{-}(\F(F)) = e_{+}(\F(F))= n$.
By Proposition \ref{sl-formula-1}-(2),
\begin{equation}\label{h_+ + h_-}
h_{+} + h_{-} = -\chi(F) + e_+ + e_- = 2g-2+2n.
\end{equation} 
Since $\partial S$ is connected all the elliptic points lie on the same boundary component and Theorem~\ref{theorem:estimate} implies: 
\[
\left| c(\phi, \partial S) \right| \leq 
\min\left\{
\inf_{m\in \mathbb N}
f_+(m),  \ \inf_{m\in \mathbb N} f_-(m)
\right\}
\]
where 
$$
f_\pm(m) = \left\{
\begin{array}{ll}
\frac{1}{m} \lceil \frac{h_\pm m}{n} - \frac{(n-1)^2}{4n^2} \rceil 
& (n: {\mbox odd})\\
& \\
\frac{1}{m} \lceil \frac{h_\pm m}{n} - \frac{n-2}{4n} \rceil 
& (n: {\mbox even})
\end{array}
\right.
$$
Hence by (\ref{h_+ + h_-}) we get: 
\[
\left| c(\phi, \partial S) \right| \leq  \left\{
\begin{array}{ll}
\inf_{m\in\mathbb N} 
\left(\frac{1}{m} 
\left\lceil \frac{(g-1+n) m}{n}  
- \frac{(n-1)^2}{4n^2}  \right\rceil \right) & (n: {\mbox odd})\\
 & \\
\inf_{m\in\mathbb N} 
\left(\frac{1}{m} 
\left\lceil \frac{(g-1+n) m}{n} 
 - \frac{n-2}{4n}  \right\rceil \right) & (n: {\mbox even})
\end{array}
\right.
\]
\end{proof}

\subsection{Tight contact and atoroidal manifolds}

With tightness assumption on contact manifolds we refine the atoroidality criterion in Corollary~\ref{atoroidal-theorem}.

\begin{theorem}[Second atoroidality criterion]\label{thm:tight-atoroidal}
Let $(S,\phi)$ be an open book supporting a tight contact structure.
If $\phi$ is of irreducible type and $c(\phi, C)> 2$ for every boundary component $C$ of $S$ then $M_{(S, \phi)}$ is atoroidal. 
\end{theorem}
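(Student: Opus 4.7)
The plan is to strengthen Corollary~\ref{atoroidal-theorem}(1) by exploiting tightness to replace the bound $|c(\phi,C)|>4$ with the sharper $c(\phi,C)>2$. I argue by contradiction: suppose $M_{(S,\phi)}$ contains an incompressible torus $T$, and aim to produce a binding component $C\subset\partial S$ with $|c(\phi,C)|\leq 2$.

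By Theorem~\ref{theorem:weak} I isotope $T$ to admit an essential open book foliation $\F(T)$. If $\F(T)$ has no singularities, every leaf is a c-circle and $T\cap S_0$ is a $\phi$-invariant essential multi-curve on $S$, contradicting that $\phi$ is of irreducible type (exactly as in the proof of Corollary~\ref{atoroidal-theorem}). So $\F(T)$ has singularities, and by Observation~\ref{obs}(3) we have $e_+(\F(T))=e_-(\F(T))=n>0$. Tightness forces $\phi$ to be right-veering by Theorem~\ref{theorem:ot}, and rules out any degenerated bc-annulus with essential c-circles by Proposition~\ref{theorem:otc-circle}, which eliminates the most problematic c-circle configurations from the region decomposition.

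Next, I put $T$ in a convex position. After a $C^\infty$-small perturbation and, if necessary, foliation moves to remove the remaining c-circles, Theorem~\ref{identity theorem} makes $\F(T)$ topologically conjugate to the characteristic foliation $\cF(T)$, so $T$ is convex. Giroux's criterion applied in the tight setting then forces the dividing set $\Gamma_T$ on the incompressible convex torus $T$ to consist of $2k$ parallel essential circles with no null-homotopic components. The dividing set partitions $T$ into $2k$ sign-alternating annular regions $A_1,\dots,A_{2k}$; each $A_i$ has $\chi(A_i)=0$ and contains elliptic points of a single sign.

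The final step is to run a refined Euler-characteristic count inside a single annular region $A_i$, along the lines of the proof of Theorem~\ref{theorem:surface} and Claim~\ref{claim:fake}, combined with Theorem~\ref{theorem:estimate} (which gives sharper estimates precisely when several same-sign elliptic points are grouped together). Since the elliptic points inside $A_i$ all share one sign, I expect this refinement to halve the valence bound obtained in the proof of Theorem~\ref{theorem:surface}(2), producing an elliptic point $v\in A_i$ with $\Hyp(v)\leq 2$. Applying Lemma~\ref{lemma:estimate} to $v$ then gives $|c(\phi,C)|\leq 2$ for the binding component $C$ containing $v$, contradicting the hypothesis. The main obstacle will be translating Giroux's topological dividing-set data into the combinatorial language of the region decomposition of $\F(T)$, and in particular making the sign separation by $\Gamma_T$ compatible with the bb-tile/bc-annulus/cc-pants decomposition; handling c-circles, which correspond to components of $\Gamma_T$, cleanly is where the bulk of the technical work is likely to lie.
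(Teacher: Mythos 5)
Your setup (contradiction, essential open book foliation on the incompressible torus $T$, disposing of the elliptic-point-free case via irreducibility exactly as in Corollary~\ref{atoroidal-theorem}) matches the paper, but the heart of your argument is missing. The sentence ``I expect this refinement to halve the valence bound'' is precisely the content of the theorem, and nothing in your proposal actually delivers it. The paper's mechanism is different from the convex-surface/dividing-set route you sketch, and it is quantitative where yours is not: tightness enters through the Bennequin--Eliashberg inequality applied to the closed torus, $|(e_+-e_-)-(h_+-h_-)|=|\langle[T],e(\xi)\rangle|\le-\chi(T)=0$, which together with $e_+=e_-$ (Observation~\ref{obs}(3)) and Poincar\'e--Hopf forces $e_+=e_-=h_+=h_->0$. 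The second ingredient is a double count: letting $N(v)$ be the number of $2$-cells around a \emph{negative} elliptic point $v$ that contain a negative hyperbolic point, each $2$-cell of the (bb-/be-tile) decomposition has exactly two negative elliptic vertices, so $\sum_v N(v)=2h_-=2e_-$; hence the average of $N(v)$ over negative elliptic points is exactly $2$ and some $v$ has $N(v)\le 2$. Lemma~\ref{lemma:estimate} (the one-sided bound $c(\phi,C)\le\Hyp^-(v)\le N(v)$ for $\sgn(v)=-1$) then gives $c(\phi,C)\le 2$. Note the theorem's hypothesis is $c(\phi,C)>2$, not $|c(\phi,C)|>2$, so only this upper bound is needed; your plan to produce $|c(\phi,C)|\le 2$ is aiming at more than necessary.

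Your dividing-set approach also has a structural problem beyond vagueness: the bb-tiles of the region decomposition each contain elliptic points of \emph{both} signs, so they straddle any curve separating positive from negative singularities. A count ``inside a single annular region $A_i$'' therefore does not see whole $2$-cells, and the Euler-characteristic bookkeeping of Theorem~\ref{theorem:surface} does not localize to $A_i$. Your appeals to Theorem~\ref{theorem:ot} (right-veering-ness) and Proposition~\ref{theorem:otc-circle} likewise produce only qualitative conclusions and never yield the numerical relation $h_-=e_-$ that drives the averaging argument. Without an identity of that kind, there is no reason the minimum of $\Hyp^-(v)$ should drop from the generic bound of Theorem~\ref{theorem:surface}(2) to $2$.
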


\begin{proof}
Let $\mathcal{T}$ be an incompressible torus in $M_{(S,\phi)}$.  
By Theorem~\ref{theorem:weak} we may assume that $\mathcal T$ admits an essential open book foliation $\F(\mathcal{T})$. 
If $\mathcal{T}$ does not intersect the binding, the proof of Corollary~\ref{atoroidal-theorem} implies that $\phi$ is reducible, which is a contradiction. 
So $\F(\mathcal T)$ must contain elliptic points.
In fact, $\F(\mathcal{T})$ contains at least two elliptic points since  $e_{+}=e_{-}$ (see Observation~\ref{obs}-(3)).

Consider the cellular decomposition of $\mathcal{T}$ as in the proof of Theorem \ref{theorem:surface} whose  $2$-cells are bb-tiles or be-tiles.

First we assume that the region decomposition of $\mathcal T$ contains no cc-pants. This implies that there are no fake elliptic or hyperbolic points.

For a positive elliptic point $v \in \F(\mathcal T)$ let $N(v)$ denote the number of 2-cells around $v$ containing positive hyperbolic points, and $\Hyp^+(v)$ be the number of positive hyperbolic points that are connected to $v$ by a singular leaf. 
If $v$ lies on the binding component $C \subset \partial S$ then Lemma~\ref{lemma:estimate} and Observation~\ref{obs}-(1) imply
\begin{equation}
\label{eqn:N(v)}
c(\phi,C) \leq \Hyp^+(v) \leq N(v).
\end{equation}
The strict inequality $\Hyp^+(v)< N(v)$ may hold in the following case: 
See Figure \ref{fig:be-tile} again where the annulus $W$, the union of two bc-annuli, is decomposed into two be-tiles. 
Recall that each be-tile contains exactly one hyperbolic point. 
Consider the case that each of the two be-tiles contains a positive hyperbolic point.  
That is, the total contribution of the be-tiles to $N(v)$ is two. 
Next we note that exactly one of the two hyperbolic points is connected to $v$ by a singular leaf.
Thus the total contribution of the two hyperbolic points in $W$ to $\Hyp^+(v)$ is one. 

Let $e_{\pm}$ and $h_{\pm}$ be the numbers of $(\pm)$ elliptic/hyperbolic points in $\F(\mathcal T)$. 
For $i\geq 0$ let $w_{i}$ be the number of positive elliptic points with $N(v)=i$.
We have
\begin{equation}
\label{eqn:eul1}
\sum_{i\geq 0} w_{i} = e_{+}.
\end{equation}
Since each 2-cell contains exactly two positive elliptic points, 
\begin{equation}
\label{eqn:eul2}
\sum_{i\geq 0} iw_{i} = 2h_{+}.
\end{equation}
By the Poincar\'e-Hopf formula,
\[ 
0= \chi(\mathcal{T}) = (e_+ + e_-) - (h_+ + h_-). 
\]
Let $e(\xi)$ be the Euler class of the tight contact structure $\xi=\xi_{(S, \phi)}$ supported by $(S,\phi)$. By the Bennequin-Eliashberg inequality \cite{el2}, we have
\[ |(e_{+} - e_{-}) -(h_{+} - h_{-}) | = |\langle [\mathcal{T}],e(\xi ) \rangle | \leq -\chi(\mathcal{T}) =  0.\]
Since $e_{+} =e_{-} >0$ we conclude that
\begin{equation}
\label{eqn:tight}
e_{+} = e_{-} = h_{+} = h_{-} >0. 
\end{equation}
By (\ref{eqn:eul1}), (\ref{eqn:eul2}) and (\ref{eqn:tight}), we have
\[ 2w_0 + w_1 = \sum_{i>2} (i-2)w_i >0,\]
or $w_2>0$ and $w_i =0$ for $i \neq 2.$
This shows that either $w_0, w_1$ or $w_2$ is positive. 
Hence there exists a positive elliptic point $v$ with $N(v) \leq 2$, so by (\ref{eqn:N(v)}) we get $c(\phi,C) \leq 2$ for the binding component $C$ on which $v$ lies.

Next we consider the case where $\F(\mathcal{T})$ contains a cc-pants. At least one of the three boundary c-circles of the  cc-pants bounds a disc in $\mathcal{T}$ and such disc must contain bc-annuli. 
Among them, consider an innermost bc-annulus, $R$. 
Here `innermost' means that the c-circle boundary of $R$ bounds a disc $D$ such that $R \subset D$ and the (possibly empty) sub-disc $\Delta = D \setminus R$ has no c-circles. 
In other words, $\Delta$ is the union of bb-tiles. 
Let $v^{*}$ be the positive elliptic point of $R$ and $K$ be a transverse unknot in $D$ which is a small perturbation of $\partial \Delta$  (see Figure \ref{fig:innermostbc}).
\begin{figure}[htbp]
  \begin{center}
\SetLabels
(0.3*0.45) {\LARGE $\Delta$}\\
(0.52*0.25) {\large $R$}\\
(0.47*0.76) {\large $K$}\\
(0.17*0.5) {\large $v^{*}$}\\
(0.79*0.82) {foliated }\\
(0.84*0.7) {without c-circles}\\
\endSetLabels
\strut\AffixLabels{\includegraphics*[width=80mm]{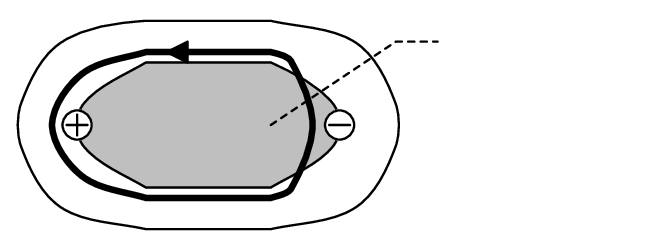}}
 \caption{An innermost bc-annulus $R$ and a subdisc $\Delta$}
\label{fig:innermostbc}
\end{center}
\end{figure}

Let $v$ be a positive elliptic point in the closure of $\Delta$. 
As in the case without c-circles, let $N(v)$ denote the number of 2-cells around $v$ in $\Delta$ that contain positive hyperbolic points, and $w_{i}$ be the number of positive elliptic points in $\Delta$ with $N(v)=i$. Then 
\begin{equation}
\label{eqn:N2(v)}
c(\phi,C) \leq  N(v) \textrm{ if } v \neq v^{*}, \ \ 
c(\phi,C) \leq N(v^{*})+1 \textrm{ if } v = v^{*}
\end{equation} 
Here `$+1$' for the case $v=v^{*}$ is needed if the bc-annulus $R$ contains a positive hyperbolic point. 

The rest of the argument is similar to the previous case.
Let $e_{\pm}$ and $h_{\pm}$ be the numbers of $(\pm)$ elliptic/hyperbolic points in $\Delta$.
Then 
\[ 
e_{+}= e_{-}, \ \  \sum_{i\geq 0} w_{i} = e_{+}, \ \ \sum_{i\geq 0}i w_{i} = 2h_{+}. 
\]
By the Poincar\'e-Hopf formula, 
\[ 2 = (e_+ + e_-) - (h_{+} + h_{-}). \]
By Bennequin-Eliashberg inequality for the transverse unknot $K$, 
\[ sl(K) = -1 + (h_{+} - h_{-}) \leq -1. \]
Combining these (in)equalities we conclude
\[ 2w_0+w_1 \geq 2 + \sum_{i\geq 2}(i-2) w_i \geq 2. \]
Hence there exists a positive elliptic point $v$ with $N(v) \leq 1$, so by (\ref{eqn:N2(v)}) we get $c(\phi,C) \leq 2$ for the binding component $C$ on which $v$ lies.
\end{proof}

If the contact structure $\xi_{(S, \phi)}$ is tight one may refine the estimates in Section \ref{sec:topol-surface} by the same technique as in the above proof, namely combination of Bennequin-Eliashberg inequality,  counts of vertices and Euler characteristic argument.

\subsection{Incompressible surfaces and $c(\phi, L, C)$}

In this section we establish estimates of $c(\phi,L,C)$. 

The following two propositions (Propositions~\ref{theorem:surface-braid} 
and \ref{theorem:surface_connected-braid}) 
are variations of Theorems~\ref{theorem:surface} and  \ref{theorem:surface_connected}, respectively. 
Their proofs are similar, except that we apply 
Lemma~\ref{lemma:estimate-braid} and Theorem~\ref{theorem:estimate-braid} instead of 
Lemma~\ref{lemma:estimate} and Theorem~\ref{theorem:estimate}, respectively. 
(We do not use Observation \ref{obs} (1), which does not hold for an incompressible surface in $M \setminus L$.)
This is because, as noted in the second paragraph of Section~\ref{sec:estimate_braid}, to estimate $c(\phi,L,C)$  
b-arcs need not be strongly essential.

\begin{proposition}
\label{theorem:surface-braid}
Let $L$ be a closed $n$-braid in an open book $(S,\phi)$.
Suppose that there exists a closed, oriented, incompressible, genus $g$ surface $F$ in $M-L$ which admits an essential open book foliation and intersects the binding in $2k \ (>0)$ points. 
\begin{enumerate}
\item If $g=0$ then $|c(\phi, L,C)| \leq 3$ for some boundary component $C$ of $S$.
\item If $g>0$ then $|c(\phi, L,C)| \leq 4 + \lfloor \frac{4g-4}{k} \rfloor$ for some boundary component $C$ of $S$.
\end{enumerate}
\end{proposition}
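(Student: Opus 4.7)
The plan is to run the proofs of Theorems \ref{theorem:surface} and \ref{theorem:surface_connected} essentially verbatim, swapping in Lemma \ref{lemma:estimate-braid} for Lemma \ref{lemma:estimate} at the crucial point. The replacement is painless precisely because Lemma \ref{lemma:estimate-braid} needs only essentiality (not strong essentiality) of the elliptic point; so the weaker hypothesis ``$\F(F)$ is essential'' in Proposition \ref{theorem:surface-braid} is enough for the braid version, whereas Theorem \ref{theorem:surface} had to rely on the automatic upgrade from essential to strongly essential b-arcs that holds for closed surfaces in $M$. Note that Observation \ref{obs}-(2) and (3) remain true when $F$ lies in $M \setminus L$: since $F$ is closed with $\partial F = \emptyset$ and disjoint from $L$, no a-arcs appear in $\F(F)$, so the region decomposition consists only of bb-tiles, bc-annuli, and cc-pants; and since the binding $B$ is null-homologous in $M$ (hence intersects $F$ algebraically zero times), we still have $e_+(\F(F)) = e_-(\F(F)) = k$.

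First I would modify $\F(F)$ to a cellular decomposition of $F$ by the same construction used in Theorem \ref{theorem:surface}: replace each cc-pants by three bc-annuli (introducing fake elliptic and fake hyperbolic points, as in Figure \ref{fig:modify-cc}), then glue each paired bc-annulus along the common c-circle and cut along two e-edges to obtain be-tiles. The resulting cell structure has 0-cells equal to the (genuine and fake) elliptic points, 1-cells the b-arcs and e-edges, and 2-cells the bb-tiles and be-tiles. The analogue of Claim \ref{claim:fake} goes through unchanged: a fake 0-cell has valence exactly $6$, while for a genuine elliptic point $v$ we have $\Hyp(v) \leq \Val(v)$. The same cell counting yields the Euler characteristic identity
\[ \sum_{i \geq 1}(4-i)V(i) = 4\chi(F). \]

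Next I would rerun the two case analyses from the proof of Theorem \ref{theorem:surface}. For $g=0$ (so $\chi(F)=2$), positivity of $3V(1)+2V(2)+V(3)$ forces a 0-cell of valence at most $3$, which cannot be fake and hence is a genuine elliptic point $v$ of $\F(F)$ with $\Hyp(v) \leq 3$. For $g \geq 1$, either some $V(i)$ with $i \leq 3$ is nonzero---giving a genuine elliptic point of valence $\leq 3$---or else every genuine elliptic point has valence $\geq 4$, and the Euler identity together with the count $e_+ + e_- = 2k$ of genuine elliptic points yields
\[ (s-4)\cdot 2k \leq \sum_{i \geq 4}(i-4)V(i) = 8g - 8, \]
so that $s \leq 4 + \lfloor (4g-4)/k \rfloor$, where $s = \min\{\Hyp(v) \mid v \text{ genuine}\}$.

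Finally, let $v$ be the genuine elliptic point realizing $s$ and let $C \subset \partial S$ be the binding component containing $v$. Since $\F(F)$ is essential, every b-arc ending at $v$ is essential in $S_t \setminus (S_t \cap L)$; hence $v$ is essential in the sense of Definition \ref{def of essential}, and Lemma \ref{lemma:estimate-braid} applies to give $|c(\phi_L, C)| \leq s$, completing both cases. The main (and really only) subtlety is confirming that the passage from $c(\phi, C)$ to $c(\phi_L, C)$ genuinely only costs strong essentiality and no more; once one checks that the combinatorics of the cellular decomposition and the fake-vertex bookkeeping are insensitive to whether $F$ lies in $M$ or in $M \setminus L$, the argument is mechanical.
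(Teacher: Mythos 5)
Your proposal is correct and is exactly the argument the paper intends: the paper only says ``By the same proofs of Theorems~\ref{theorem:surface} and \ref{theorem:surface_connected} we obtain the following two results,'' and you have supplied precisely the needed details, correctly identifying that Observation~\ref{obs}-(2),(3) survive for closed $F \subset M\setminus L$ and that the only substantive change is swapping Lemma~\ref{lemma:estimate} for Lemma~\ref{lemma:estimate-braid}, which requires only essentiality (not strong essentiality) and so matches the weaker hypothesis on $\F(F)$.
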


\begin{proposition}[Connected binding case]
\label{theorem:surface_connected-braid}
Under the same setting of Proposition~\ref{theorem:surface-braid}, 
assume further that $\partial S$ is connected.
Then
\[ |c(\phi, L, \partial S) | \leq \inf_{m \in \mathbb N} \mathcal G(m)\] 
where $\mathcal G: \mathbb{N} \rightarrow \Q$ is a map defined by:
\[ 
\mathcal G(m)=  \left\{ 
\begin{array}{ll}
 \frac{1}{m}\left\lceil \frac{(g-1+k)m}{k} - \frac{(k-1)^{2}}{4 k^{2}}  \right\rceil
& ( k \textrm{: odd}) \\
& \\
 \frac{1}{m} \left\lceil \frac{(g-1+k)m}{k} - \frac{k-2}{4k}  \right\rceil 
 & ( k \textrm{: even})
 \end{array}
 \right.
\]
\end{proposition}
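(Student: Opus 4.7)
The plan is to run the proof of Theorem~\ref{theorem:surface_connected} almost verbatim, with Theorem~\ref{theorem:estimate-braid} substituted for Theorem~\ref{theorem:estimate}. The setup is identical: $F$ is a closed incompressible surface, now sitting in $M \setminus L$ rather than in $M$, but the required structural features of $\F(F)$ transfer immediately.

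First I would verify that the three bullet points of Observation~\ref{obs} remain valid in the braid setting for a closed incompressible $F \subset M \setminus L$. Namely: (i) since $F$ is disjoint from $L$, no leaf of $\F(F)$ can have an endpoint on $L$, so no a-arcs appear, and the region decomposition consists only of bb-tiles, bc-annuli and cc-pants; (ii) because Theorem~\ref{theorem:estimate-braid} only requires essentiality (rather than strong essentiality) of the elliptic points used, and every b-arc of an essential open book foliation on a closed surface is essential, the hypotheses of Theorem~\ref{theorem:estimate-braid} are met; (iii) since the binding $B=\partial S$ is null-homologous in $M$, the algebraic intersection of $F$ with $B$ vanishes, so $e_+(\F(F))=e_-(\F(F))=k$.

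Since $\partial S$ is connected, every elliptic point of $\F(F)$ lies on $C=\partial S$. Label the positive and negative elliptic points by $v_1^+,\ldots,v_k^+$ and $v_1^-,\ldots,v_k^-$, respectively. By Proposition~\ref{sl-formula-1}(2),
\[
h_+ + h_- \;=\; (e_+ + e_-) - \chi(F) \;=\; 2k + 2g - 2.
\]
Let $P$ (respectively $N$) be the number of positive (respectively negative) hyperbolic points of $\F(F)$ joined by a singular leaf to at least one of the $v_i^+$ (respectively $v_i^-$). Then $P \leq h_+$ and $N \leq h_-$, so $\min(P,N) \leq k+g-1$.

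Next, I would apply part (1) of Theorem~\ref{theorem:estimate-braid} to $v_1^-,\ldots,v_k^-$ and part (2) to $v_1^+,\ldots,v_k^+$, and combine the resulting two-sided inequalities to obtain
\[
|c(\phi_L, \partial S)| \;\leq\; \inf_{m \in \mathbb{N}} \min\bigl(f_-(m;N),\, f_+(m;P)\bigr),
\]
where $f_\pm(m;H)$ denotes the function from Theorem~\ref{theorem:estimate-braid} with hyperbolic count $H$ and $n=k$. Choosing whichever of $P$ and $N$ realizes the minimum, and using the bound $\min(P,N)\leq k+g-1$, the right-hand side is at most $\inf_{m\in\mathbb{N}}\mathcal{G}(m)$, giving the claimed inequality.

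The only real obstacle is the bookkeeping of Observation~\ref{obs} in the braid context; in particular, confirming that essentiality of b-arcs (guaranteed by the hypothesis that $\F(F)$ is essential) suffices to invoke Theorem~\ref{theorem:estimate-braid}. Once this is settled, the proof is a direct translation of the argument for Theorem~\ref{theorem:surface_connected}.
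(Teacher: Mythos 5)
Your proposal is correct and takes the paper's own route: the paper obtains Proposition~\ref{theorem:surface_connected-braid} by running the proof of Theorem~\ref{theorem:surface_connected} verbatim with Theorem~\ref{theorem:estimate-braid} (essentiality only) in place of Theorem~\ref{theorem:estimate}, which is exactly what you do. One tiny bookkeeping remark: the lower bounds coming from the two applications of Theorem~\ref{theorem:estimate-braid} involve the positive (resp.\ negative) hyperbolic points adjacent to the \emph{negative} (resp.\ \emph{positive}) elliptic points, not your $P$ and $N$; but since all four counts are dominated by $h_{\pm}$ and $\min(h_+,h_-)\leq \tfrac{1}{2}(h_++h_-)=g-1+k$, the conclusion is unaffected, and the paper glosses this in the same way.
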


\begin{remark}\label{3rd-atoridal-criterion}
The same statements as 
Corollaries~\ref{irreducible-theorem} and \ref{atoroidal-theorem} (atoroidality criterion), where $M$ is replaced by $M-L$ and $c(\phi, C)$ is replaced by $c(\phi, L, C)$, hold. 
\end{remark}

Now we relate $c(\phi, L, C)$ and an incompressible Seifert surface of $L$.
The following theorem (or Corollary~\ref{cor:lower bound of g}) plays an essential role in the proof of our main result Theorem~\ref{theorem:geometry}. 

\begin{theorem}
\label{theorem:genus}
Let $L$ be a null-homologous, closed $n$-braid with respect to an open book $(S,\phi)$.
Let $F$ be a Seifert surface of $L$ realizing the maximal Euler characteristic, $\chi(F)$.
\begin{enumerate}

\item[(1a)]
If $\chi(F) > 0$ then $|c(\phi, L,C)| \leq 3$ for some  boundary component $C \subset \partial S$.
\item[(1b)] 
If $\chi(F) < 0$ and $F$ intersects the  bindings in $k\ (>0)$ points then there exists a binding component $C \subset \partial S$ such that
\[ 
|c(\phi, L, C)| \leq \min \left\{
\left\lfloor -\frac{4}{k}\chi(F) \right\rfloor  + 4, -\chi(F) + k \right\}
\]

\item[(2)]
Moreover, if $\partial S$ is connected and $\chi(F)\leq 0$ then
\[ |c(\phi, L,\partial S)| \leq \frac{n-\chi(F)}{n}. \]
\end{enumerate}
\end{theorem}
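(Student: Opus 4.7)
The plan is to extend the closed-surface arguments of Theorems~\ref{theorem:surface} and~\ref{theorem:surface_connected} to the Seifert-surface setting. First I would use that, since $F$ attains the maximal Euler characteristic among Seifert surfaces of $L$, a standard compression-surgery argument shows $F$ is incompressible in $M\setminus L$; Theorem~\ref{theorem:weak} then lets us isotope $F$ rel $L$ so that $\F(F)$ is essential, making every b-arc essential and permitting direct application of Lemma~\ref{lemma:estimate-braid} and Theorem~\ref{theorem:estimate-braid} to the elliptic points on the binding.

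For parts (1a) and (1b) I would transplant the cellular-decomposition argument from the proof of Theorem~\ref{theorem:surface}. After replacing each cc-pants in the region decomposition by three bc-annuli with fake elliptic and hyperbolic points (Figure~\ref{fig:modify-cc}) and pairing each pair of bc-annuli sharing a c-circle boundary into two be-tiles (Figure~\ref{fig:be-tile}), one obtains a cellular decomposition of $F$ whose 2-cells are aa-, ab-, bb-, and be-tiles. Interior 1-cells bound two 2-cells while 1-cells on $\partial F = L$ bound only one, so the Euler characteristic calculation yields
\[
\sum_{i \geq 1}(4-i)\, V(i) \;=\; 4\chi(F) + E_{\partial F},
\]
where $V(i)$ counts 0-cells of valence $i$ and $E_{\partial F}$ is the number of boundary 1-cells on $L$. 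Since L-endpoint 0-cells have valence exactly three and fake elliptic 0-cells have valence six, part (1a) will follow by extracting a non-fake, non-L-endpoint elliptic 0-cell $v$ of valence $\leq 3$ and invoking Lemma~\ref{lemma:estimate-braid}. Part (1b) will follow by an averaging argument over the $k$ non-fake binding 0-cells to produce one with $\Hyp(v) \leq \lfloor -4\chi(F)/k\rfloor + 4$; the alternative bound $-\chi(F)+k$ is immediate from the trivial estimate $\Hyp(v) \leq h_+ + h_- = k - \chi(F)$.

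For part (2), with $\partial S$ a single circle, all elliptic points lie on $\partial S$, and null-homology of $[\partial S]$ in $H_1(M)$ together with $[L]=0$ forces $e_+ = e_-$. My key new observation will be that $e_+ \geq n$: at any regular page $S_{t_0}$ the $n$ positively transverse intersections of $L$ with $S_{t_0}$ are the $L$-endpoints of $n$ distinct a-arcs, whose binding endpoints are $n$ distinct positive elliptic points, since near any positive elliptic point $\F(F)$ is radial and hence meets $S_{t_0}$ in a single leaf. I would then apply Theorem~\ref{theorem:estimate-braid}-(2) to these $n$ distinguished positive elliptic points and let $m\to\infty$; combining the resulting bound with the Euler identity $h_+ + h_- = 2e_+ - \chi(F)$ and a careful count of which hyperbolic points are adjacent to the $n$ distinguished vertices should yield $|c(\phi_L,\partial S)| \leq (n-\chi(F))/n$.

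The hardest part will be the combinatorial bookkeeping once $\partial F = L$ is present: both the boundary-correction $E_{\partial F}$ in parts (1a) and (1b), and the precise adjacency count in part (2) needed to match the hyperbolic-point tally to the denominator $n$, will require care in tracking how the region decomposition interacts with $L$ near the boundary and in distinguishing vertex types (elliptic, fake, L-endpoint) in the Euler-characteristic equality.
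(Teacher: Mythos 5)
Your treatment of (1a) and (1b) is essentially the paper's argument: maximal $\chi(F)$ gives incompressibility, Theorem~\ref{theorem:weak} gives an essential foliation, and a valence count in a cellular decomposition built from the region decomposition, combined with Lemma~\ref{lemma:estimate-braid}, produces the bounds (the second bound in (1b) via Poincar\'e--Hopf exactly as you say). Your bookkeeping differs only cosmetically from the paper's: you keep $\partial F=L$ and carry a correction term $E_{\partial F}$ (which is then cancelled by the valence-$3$ vertices on $L$), whereas the paper collapses each boundary component of $F$ to a point and weights vertices by the pair (number of boundary $1$-cells, number of interior $1$-cells). One item you should not gloss over: ac-annuli are not discs, so they must also be cut (the paper pairs each ac-annulus with an adjacent bc- or ac-annulus and introduces d-edges to form abde- or ad-tiles); your list of $2$-cells omits this case.

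Part (2), however, contains a genuine error and a consequent gap. The claim that $e_+=e_-$ is false for a Seifert surface of a closed $n$-braid: the algebraic intersection of $F$ with the binding equals the braid index, so $e_+-e_-=n$ (this is exactly what the paper uses; $e_+=e_-$ holds only for closed surfaces, cf.\ Observation~\ref{obs}-(3)). Consequently your identity $h_++h_-=2e_+-\chi(F)$ should read $h_++h_-=e_++e_--\chi(F)=n+2e_--\chi(F)$. More seriously, your plan of applying Theorem~\ref{theorem:estimate-braid}-(2) only to the $n$ positive elliptic points met by the a-arcs of a fixed page cannot close: the quantities $P$ and $N$ there count \emph{all} hyperbolic points of each sign adjacent to the chosen vertices, and when $e_->0$ these can be as large as $h_\pm$, whose sum $n+2e_--\chi(F)$ exceeds $n-\chi(F)$; dividing by the fixed denominator $n$ then overshoots the target bound. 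The paper's fix is to apply Theorem~\ref{theorem:estimate-braid} to \emph{all} $e_+=n+e_-$ positive elliptic points and, separately, to all $e_-$ negative ones, and to take the better of the two bounds for each side of the inequality; since $\min\{h_+,h_-\}\le\tfrac{1}{2}(h_++h_-)$ and the denominator $e_+=n+e_-$ grows together with the numerator, one gets $|c(\phi_L,\partial S)|\le\frac{n+2e_--\chi(F)}{2(n+e_-)}\le\frac{n-\chi(F)}{n}$ (with the case $e_-=0$ handled directly). You would need to replace your choice of elliptic points by this global count to make (2) go through.
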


\begin{proof}

The idea of the proof is similar to that of Theorem \ref{theorem:surface}, but we need extra arguments because $F$ has non-empty boundary. 
Note that $F$ is incompressible.
By Theorem~\ref{theorem:weak} we may assume that the open book foliation $\F(F)$ is essential. 

(1) 
Consider the closed surface $\widehat{F}$ obtained by identifying each boundary component of $F$ with a point.
As in the proof of Theorem~\ref{theorem:surface}, we get a cellular decomposition of $\widehat{F}$ from the region decomposition of $\F(F)$. 
Since $F$ is not a closed surface we need the following operation in addition to the ones described in Figures \ref{fig:modify-cc} and \ref{fig:be-tile}:  
If there exists an ac-annulus it is paired up with either a bc-annulus as in the left sketch of Figure~\ref{fig:ad-tile} or a ac-annulus. 
In the former case, 
we cut the region into two pentagons along an e-edge and an essential arc, called a {\em d-edge}, joining an elliptic point and the boundary of the surface.  
We call the pentagons {\em abde-tiles}. 
In the latter case we obtain two hexagons called {\em ad-tiles}. 
We may assume that each tile contains one hyperbolic point. 
\begin{figure}[htbp]
\begin{center}
\SetLabels
(.2*.87) ac-annulus\\
(.2*.7) bc-annulus\\
(0.62*0.4) d-edge \\
(0.89*0.4) e-edge \\
(.75*.77) abde-tile\\
(.75*.20) abde-tile\\
(.76*.02) a-arc\\
(.76*.98) a-arc\\
(.76*.35) b-arc\\
(.76*.65) b-arc\\
\endSetLabels
\strut\AffixLabels{\includegraphics*[scale=0.5, width=100mm]{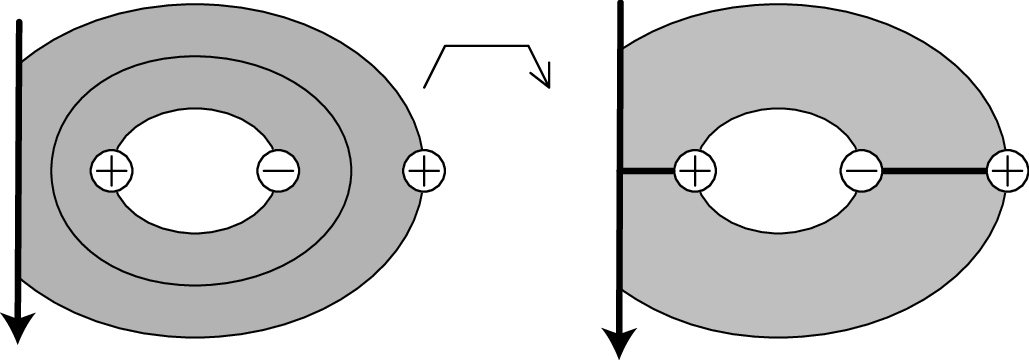}}
\caption{From a pair of ac- and bc-annuli to two abde-tiles. }\label{fig:ad-tile}
\end{center}
\end{figure}

Consider the cell decomposition of $\widehat F$ where 
a $2$-cell is deformation of either aa-, ab-, bb-, be-, ad- or abde-tile under the operation $F \to \widehat F$.
A $0$-cell is either an elliptic point, a fake elliptic point or a newly attached point to a boundary component of $F$, which we call a {\em boundary $0$-cell}. 
Let us call fake and non-fake elliptic points {\em interior $0$-cells}. 
Also call a $1$-cell that ends (resp. does not end) at a boundary $0$-cell {\em boundary $1$-cell} (resp. {\em interior $1$-cell}). 
A boundary $1$-cell is an a-arc or a d-edge, and an interior $1$-cell is a b-arc or an e-edge. 

We say that an interior $0$-cell $w$ is of {\em type $(i,j)$} if $w$ has valence $(i+j)$ and is a common endpoint of $i$ boundary $1$-cells and $j$ interior $1$-cells.
Let $V(i,j)$ be the number of interior $0$-cells of type $(i,j)$.
Let $E_{\partial}$ be the number of boundary $1$-cells, $E$ be the number of interior $1$-cells, and $R$ be the number of $2$-cells. 
Since each $1$-cell is a common boundary of two $2$-cells (degenerate $2$-cells are counted with multiplicity $=2$) and each $2$-cell has four $1$-cells (degenerate $1$-cells are counted with multiplicity $=2$) we have:
\begin{equation}
\label{eqn:ER2}
2(E + E_{\partial}) = 4R .
\end{equation}
Since each boundary $1$-cell contains one interior $0$-cell, 
\begin{equation}
\label{eqn:VE2}
\sum_{n=1}^{\infty} \sum_{i=0}^{n}i V(i,n-i)= E_{\partial}.
\end{equation}
Since both the endpoints of an interior $1$-cell are two distinct interior $0$-cells, counting the number of interior $1$-cells we get: 
\begin{equation}
\label{eqn:VE3}
\sum_{n=1}^{\infty} \sum_{i=0}^{n} (n-i) V(i,n-i)= 2 E
\end{equation}
Let $d$ be the number of boundary components of $F$. The Euler characteristic satisfies: 
\begin{equation}
\label{eqn:euler2}
\chi(\widehat{F}) = d+ \chi(F) = \left( d + \sum_{n=1}^\infty \sum_{i=0}^{n} V(i,n-i) \right) -(E + E_\partial) + R 
\end{equation}
From (\ref{eqn:ER2}), (\ref{eqn:VE2}), (\ref{eqn:VE3}) and (\ref{eqn:euler2}), we get the Euler characteristic equality:
\begin{equation}
\label{eqn:eulerform_seifert}
4\chi(F) = \sum_{n=1}^{\infty} \sum_{i=0}^{n} (4-n-i) V(i,n-i). 
\end{equation}

(1a) 
Suppose that $\chi(F) > 0$.  

If $\F(F)$ contains no hyperbolic points then $F$ is a disc and $\F(F)$ consists of a-arcs and one positive elliptic point. Thus $L=\partial F$ is a meridional circle of some binding component $C$ and we have $c(\phi, L, C) = 0$ as discussed in Example~\ref{ex:unknot}. 

When $\F(F)$ contains hyperbolic points we view (\ref{eqn:eulerform_seifert}) as
\begin{eqnarray*}
&&3V(0,1) + 2V(1,0) + 2V(0,2)+ V(1,1) + V(0,3)\\
&=& 4\chi(F)+  V(2,1) + 2V(3,0) + \sum_{n= 4}^\infty \sum_{i=0}^{n} (n+i-4)V(i,n-i) > 0. 
\end{eqnarray*}
By Claim~\ref{claim:fake} there exists a non-fake elliptic point of valence at most three, and Lemma~\ref{lemma:estimate-braid} shows that $|c(\phi, L, C)| \leq 3$ for some $C\subset \partial S$.

(1b) 
Suppose that $\chi(F) < 0$.

First we show $|c(\phi, L, C)| \leq 
\left\lfloor -\frac{4}{k}\chi(F) \right\rfloor  + 4$. 
As in the proof of Theorem \ref{theorem:surface},
let $\Hyp(v)$ denote the number of non-fake hyperbolic points that are connected to a $0$-cell $v$ by a singular leaf of $\F(F)$.  
Let us define 
\[ s = \min \{\Hyp(v) \: | \: v \textrm{ is a 0-cell and non-fake elliptic point} \}. 
\]
Suppose that $s = \Hyp(v)$. 
Let $C \subset \partial S$ be the binding component that contains $v$. 
We rewrite (\ref{eqn:eulerform_seifert}) as
\begin{eqnarray}
\label{eqn:euler2-1}
0&<& -4 \chi(F) \\
&\leq&  3V(0,1) + 2V(1,0) + 2V(0,2)+ V(1,1) + V(0,3) -4 \chi(F)  \notag\\
&= & V(2,1) + 2V(3,0) + \sum_{n= 4}^\infty \sum_{i=0}^{n} (n+i-4)V(i,n-i). 
\end{eqnarray}
If at least one of $V(0,1)$, $V(1,0)$, $V(0,2)$, $V(1,1)$, $V(0,3)$, $V(2,1)$, and $V(3,0)$ is positive then 
\[ 
|c(\phi, L, C)| \leq s\leq i+j \leq 3 \leq \left\lfloor \frac{-4}{k}\chi(F) \right\rfloor + 4. 
\]
If $V(i, j)=0$ whenever $i+j\leq 3$ then (\ref{eqn:euler2-1}) and a similar argument as in the last paragraph of the proof of Theorem~\ref{theorem:surface}  show that
\[ 
|c(\phi, L,C)| \leq s \leq \left\lfloor \frac{-4}{k}\chi(F) \right\rfloor + 4. 
\]

Next we show $|c(\phi, L,C)| \leq -\chi(F) + k$.
Let $h$ be the number of hyperbolic points of $\F(F)$. 
Since $k$ is equal to the number of elliptic points of $\F(F)$, by the Poincar\'e-Hopf formula we have $ \chi(F) = k -h$. Hence for any binding  component $C \subset \partial S$ that intersects $F$ we have
\[ |c(\phi, L,C) | \leq s \leq h = -\chi(F) + k. \]

(2) Assume that $\partial S $ is connected. 
All the elliptic points of $\F(F)$ lie on the binding $\partial S$ and the algebraic intersection number of $F$ and $\partial S$ satisfies $n= F \cdot \partial S = e_+ - e_-$, where $n$ is the braid index of $L$.  
For $\e,\delta \in \{\pm\}$ let us define 
$f_{\varepsilon \delta}(m):\mathbb{N} \rightarrow \Q$ by 
\[ f_{\varepsilon \delta}(m) = 
\frac{1}{m} \left\lceil
\frac{h_\varepsilon m}{e_\delta} \right \rceil. 
\]
If $e_-=0$, by Theorem~\ref{theorem:estimate-braid}-(2) we have:
\[
\left| c(\phi, L, \partial S) \right| \leq  
\max
\left\{
\inf_{m \in \mathbb{N}} f_{++}(m), \ 
\inf_{m \in \mathbb{N}} f_{-+}(m)
\right\}. 
\]
Since $h_{+} + h_{-} = e_+ + e_- - \chi(F) = n -\chi(F)$ (Proposition \ref{sl-formula-1}-(2)) and $e_\pm, h_\pm \geq 0$ 
we obtain 
\[
\left| c(\phi, L, \partial S) \right| \leq 
\inf_{m \in \mathbb{N}} \frac{1}{m} 
\left\lceil \frac{m(n-\chi(F))}{n} \right\rceil 
\leq \frac{1}{n} \left\lceil \frac{n(n-\chi(F))}{n} \right\rceil
= \frac{n-\chi(F)}{n}. 
\]
Next assume that $e_- > 0$ i.e., $e_{+}=e_{-}+n$. 
By Theorem~\ref{theorem:estimate-braid} we have: 
\[ 
-\min\left\{
\inf_{m \in \mathbb{N}}  f_{+-}(m),
\inf_{m \in \mathbb{N}}  f_{-+}(m) 
\right\} \leq 
c(\phi, L,\partial S) 
\leq \min\left\{ 
\inf_{m \in \mathbb{N}}  f_{++}(m),
\inf_{m \in \mathbb{N}}  f_{--}(m)
\right\}
\]
and hence
\[
\left| c(\phi, L, \partial S) \right| \leq 
\max 
\left\{
\min
\left\{
\inf_{m \in \mathbb{N}}  f_{++}(m),
\inf_{m \in \mathbb{N}}  f_{--}(m)
\right\}, 
\min
\left\{
\inf_{m \in \mathbb{N}} f_{+-}(m) 
\inf_{m \in \mathbb{N}} f_{-+}(m)
\right\}
\right\}.
\]
Since $e_{+}= n+ e_{-} > e_{-}$ and $h_{+} + h_{-} = e_{+} + e_{-} -\chi(F) = n + 2e_{-} -\chi(F)$ we get
\begin{eqnarray}\label{eq:n-chi(F)/n}
\left| c(\phi, L, \partial S) \right| 
& \leq & 
\inf_{m \in \mathbb{N}} 
\frac{1}{m} \left\lceil
\frac{m (h_++h_-)/2}{e_+} \right\rceil \notag\\
&\leq&
\frac{n+2e_{-} - \chi(F)}{2(n+e_-)} \\
&<&
1 - \frac{\chi(F)}{2n} 
\qquad \qquad \mbox{(since $\chi(F) \leq 0$)}\notag\\
&\leq &
\frac{n-\chi(F)}{n}. \notag
\end{eqnarray}
\end{proof}

Theorem~\ref{theorem:genus} gives simple estimates for the genera of null-homologous closed braids.

\begin{corollary}\label{cor:lower bound of g}
Assume that $L$ is a knot of genus $g(L)$. 
\begin{enumerate}
\item 
We have
\[ g(L) \geq \frac{1}{2} \left( \min_{C \subset \partial S} \{ |c(\phi, L,C) |\}  - 3 \right) . \]
\item Assume that $\partial S$ is connected. 
\begin{enumerate}
\item If $g(L)=0$ (i.e., $L$ is an unknot) then
$|c(\phi, L,\partial S)| < 1,$
\item if $g(L)>0$ then
$|c(\phi, L,\partial S)| \leq 2g(L).$
\end{enumerate}
\end{enumerate}
\end{corollary}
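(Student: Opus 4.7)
The plan is to apply Theorem~\ref{theorem:genus} to a minimal-genus Seifert surface $F$ of $L$ and read off the stated bounds arithmetically; only the unknot case of (2) requires going back to Theorem~\ref{theorem:estimate-braid} directly.

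Fix a Seifert surface $F$ realizing the minimal genus $g(L)$. Then $F$ is incompressible, so by Theorem~\ref{theorem:weak} I may assume $\F(F)$ is essential. Since $L$ is a knot, $\chi(F)=1-2g(L)$, and since $L$ is a nonempty closed braid each point of $L\cap S_t$ is an endpoint of an a-arc of $\F(F)$; hence $F$ meets the binding in $k\geq 1$ elliptic points.

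For~(1), if $g(L)=0$ then $\chi(F)=1>0$ and Theorem~\ref{theorem:genus}(1a) gives $\min_C|c(\phi_L,C)|\leq 3$, which is precisely the claimed inequality. If $g(L)\geq 1$ then $\chi(F)<0$ and Theorem~\ref{theorem:genus}(1b) produces a $C\subset\partial S$ with
\[
|c(\phi_L,C)| \leq \min\!\left\{\left\lfloor\tfrac{4(2g(L)-1)}{k}\right\rfloor + 4,\ 2g(L) - 1 + k\right\}.
\]
A case split on $k$ finishes the proof: for $1\leq k\leq 4$ the second term is $\leq 2g(L)+3$, and for $k\geq 5$ the first term is bounded by $\tfrac{4}{k}(2g(L)-1)+4\leq 2g(L)+3$. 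Either way $\min_C|c(\phi_L,C)|\leq 2g(L)+3$, which is (1).

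Now assume $\partial S$ is connected. For (2)(b), $g(L)\geq 1$ gives $\chi(F)\leq -1$, so Theorem~\ref{theorem:genus}(2) applies and yields
\[
|c(\phi_L,\partial S)| \leq \tfrac{n-\chi(F)}{n} = 1+\tfrac{2g(L)-1}{n}\leq 2g(L)
\]
since $n\geq 1$. For (2)(a), $g(L)=0$ so $F$ is a disc with $\chi(F)=1$ and Theorem~\ref{theorem:genus}(2) is not directly available; I apply Theorem~\ref{theorem:estimate-braid} on its own. All elliptic points of $\F(F)$ lie on $\partial S$, and the Euler identity together with $e_+-e_-=n$ (since $\partial S$ is connected) gives $h_++h_-=e_++e_--\chi(F)=2e_-+n-1$. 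If $e_-=0$, Theorem~\ref{theorem:estimate-braid}(2) applied to the $e_+=n$ positive elliptic points gives $|c(\phi_L,\partial S)| \leq \max(h_+,h_-)/n \leq (n-1)/n<1$. If $e_->0$, applying Theorem~\ref{theorem:estimate-braid}(2) to the positive elliptic points and Theorem~\ref{theorem:estimate-braid}(1) to the negative ones separately and combining via the mediant inequality $\min(a/b,c/d)\leq(a+c)/(b+d)$ gives
\[
|c(\phi_L,\partial S)| \leq \tfrac{h_++h_-}{e_++e_-} = \tfrac{2e_-+n-1}{2e_-+n} < 1,
\]
which proves (2)(a).

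The only non-routine step is (2)(a), since Theorem~\ref{theorem:genus}(2) cannot be invoked when $\chi(F)=1>0$; the point is to check that the separate bounds coming out of Theorem~\ref{theorem:estimate-braid} combine via the mediant inequality to the strict bound $<1$. All remaining steps reduce to numerical comparisons in $g(L)$, $n$, $k$ and $e_-$.
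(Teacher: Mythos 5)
Your proof is correct and follows essentially the same route as the paper: parts (1) and (2)(b) are read off from Theorem~\ref{theorem:genus} exactly as in the paper, with the same case split on $k$. For (2)(a) the paper simply substitutes $\chi(F)=1$ into the intermediate inequality (\ref{eq:n-chi(F)/n}) from the proof of Theorem~\ref{theorem:genus}(2), obtaining $\frac{n+2e_--1}{2(n+e_-)}<1$; your rederivation from Theorem~\ref{theorem:estimate-braid} via the mediant inequality gives the slightly weaker but equally sufficient bound $\frac{2e_-+n-1}{2e_-+n}<1$, and is a valid (arguably more self-contained) way to handle the fact that the statement of Theorem~\ref{theorem:genus}(2) requires $\chi(F)\leq 0$.
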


\begin{proof}
(1) 
If $g(L)=0$ then by (1a) of Theorem \ref{theorem:genus} we have $|c(\phi, L,C)| \leq 3= 2g(L) + 3$ for some $C\subset \partial S$.  
If $g(L) \geq 1$ then by (1b) of Theorem \ref{theorem:genus} we have for some $C\subset \partial S$: 
\begin{eqnarray*}
|c(\phi, L,C)| & \leq & \min \left\{ \left\lfloor \frac{-4}{k}\chi(F) \right\rfloor + 4 , -\chi(F) + k \right\} \\
& \leq & -\chi(F) + 4 \\
& \leq & 2g(L) + 3  
\end{eqnarray*}

(2a) 
If $g(L)=0$, plugging $\chi(F) =1$ into (\ref{eq:n-chi(F)/n}) we get 
\[ 
|c(\phi, L, \partial S)| \leq \frac{n+2 e_- - 1}{2(n+e_-)} <1. 
\]

(2b) is a direct consequence of Theorem \ref{theorem:genus}-(2).
\end{proof}

\begin{remark-unnumbered}
In the proof of Theorem~\ref{theorem:genus} we use a  weaker form of the estimates in Theorem~\ref{theorem:estimate}. 
By using the original form of Theorem~\ref{theorem:estimate} we may sharpen the estimates in Theorem~\ref{theorem:genus} and Corollary~\ref{cor:lower bound of g}.
\end{remark-unnumbered}

\section{Geometric structures of open book manifolds and braid complements}\label{sec:geometry}

We apply results in Section~\ref{sec:topol} to study geometric structures of open book manifolds.

First we observe that periodic monodromy implies Seifert-fibered in most cases.

\begin{proposition}
\label{periodic-theorem}
Assume that $\phi \in \Aut(S, \partial S)$ is periodic and 
$c(\phi,C) \neq 0$ for every boundary component $C$ of $S$.
Then the 3-manifold $M=M_{(S, \phi)}$ is Seifert fibered.
\end{proposition}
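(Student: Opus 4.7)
The idea is to equip $M_\phi$ with a Seifert fibration coming from a finite-order representative of $\phi$ and then verify, using the hypothesis on the FDTC, that this Seifert structure extends across the Dehn fillings of $M_\phi$ producing $M_{(S,\phi)}$. Since $\phi$ is periodic, there exists a finite-order homeomorphism $\Phi: S \to S$ with $\Phi^N = \mathrm{id}$ that is freely isotopic to $\phi$; by Definition~\ref{def of c(phi,C)}, $\phi^N = T_{C_1}^{M_1}\circ\cdots\circ T_{C_d}^{M_d}$ in $\Aut(S,\partial S)$ and $c(\phi, C_i) = M_i/N$, so by hypothesis each $M_i$ is nonzero.

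First I would show that $M_\Phi$ is Seifert fibered over the 2-orbifold $S/\langle\Phi\rangle$. The cyclic action of $\langle\Phi\rangle \cong \Z/N$ on $S \times S^1$ given by $(x,\theta)\mapsto(\Phi(x), \theta+1/N)$ is free and commutes with the $S^1$-rotation on the second factor, so the quotient inherits an $S^1$-action, i.e., a Seifert fibration (with exceptional fibers over points of nontrivial $\Phi$-stabilizer in $S$). Because the classes of $\phi$ and $\Phi$ in $\pi_0\mathrm{Homeo}(S)$ agree, a choice of free isotopy from $\mathrm{id}$ to $\Phi^{-1}\phi$ induces a homeomorphism $F: M_\phi \to M_\Phi$, through which we pull the Seifert structure back to $M_\phi$.

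It remains to verify that this Seifert fibration extends over each Dehn filling that caps the boundary torus $T_C \subset \partial M_\phi$ along the open book meridian $\mu_C = \{p\}\times S^1$. A Seifert structure extends precisely when the meridian is not parallel to a regular fiber, equivalently when their algebraic intersection on $T_C$ is nonzero. Writing $\Phi|_C = R_{k_C/N}$ (rotation of $C$ by $2\pi k_C/N$) and parametrizing $T_C^\Phi$ as $\R^2/\langle (1,0), (-k_C/N, 1)\rangle$, a direct calculation yields that the regular fiber class is $(k_C/d, N/d)$ where $d = \gcd(k_C, N)$, while $F_*(\mu_C)$ has class $(j, 1)$ in the same basis, with $j\in\Z$ the winding number of the boundary isotopy $g_t|_C$. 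Iterating $g_t$ up to $t=N$ recovers $\phi^N$ on $S$, and tracking the total boundary rotation identifies $M_C = jN - k_C$. Hence
\[ \mu_C \cdot f_C \;=\; \frac{jN - k_C}{d} \;=\; \frac{M_C}{d} \;=\; \frac{N\cdot c(\phi, C)}{d}, \]
which is nonzero exactly when $c(\phi, C) \neq 0$.

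Under the hypothesis that $c(\phi, C) \neq 0$ for every boundary component, the Seifert fibration therefore extends across each Dehn filling, giving $M_{(S,\phi)}$ a Seifert fibration. The main technical obstacle is the computation in the previous paragraph: expressing both the Seifert fiber and the open book meridian as curves in a common basis of $H_1(T_C)$, and recognizing their intersection as $N\cdot c(\phi, C)/d$ up to sign; this requires carefully relating the winding number $j$ of the free isotopy on $C$ to the integer $M_C$ in the factorization $\phi^N = T_{C_1}^{M_1}\cdots T_{C_d}^{M_d}$, which pins down how the topological identification $M_\phi \cong M_\Phi$ interacts with the open book Dehn filling data.
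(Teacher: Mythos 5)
Your proposal is correct and follows essentially the same route as the paper: Seifert-fiber the mapping torus $M_\phi$ using the periodicity of $\phi$, then check that the fibration extends over each binding solid torus because the regular fiber on the boundary torus has slope $c(\phi,C_i)=p_i/q_i\neq 0$ relative to the open book meridian. The paper shortcuts the boundary-slope bookkeeping that you flag as the main technical obstacle by normalizing the representative of $\phi$ to be the explicit fractional rotation $z\mapsto z\exp(-2\pi\sqrt{-1}\,(|z|-1)p_i/q_i)$ on a collar of $C_i$, which makes the regular fiber class $p_i[C_i]+q_i[S^1]$ (and hence the nonzero intersection with the meridian $[S^1]$) immediate.
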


\begin{proof}

Let $C_1,\dots,C_r$ be the boundary components of $S$. 
Let $A_i \subset S$ be an annular neighborhood of $C_i$. 
We identify $A_i$ with the annulus in the complex plane:  
\[ A_{i} \cong \{z \in \mathbb{C} \: | \: 1 \leq |z| \leq 2\} 
\]
where $C_i$ is identified with 
the unit circle $\{|z|=1\}$ oriented {\em clockwise}.   
Suppose that $ c(\phi,C_{i}) = \frac{p_i}{q_i}$ where $(p_{i}, q_{i})$ are coprime integers and $q_i>0$. 
We may arrange the monodromy $\phi$ by isotopy so that:
\begin{itemize}
\item $\phi(A_{i})=A_{i}$ \ (set-wise).
\item $\phi(z) = z \exp (-2\pi\sqrt{-1}(|z|-1)\frac{p_{i}}{q_{i}})$ for $z \in A_{i} =  \{z \in  \mathbb{C} \: | \: 1 \leq |z| \leq 2\}$.
\end{itemize}
In other words, 
putting $A = \cup_i A_i$, the FDTC of $\phi|_{S\setminus A}$ is $0$ for all the boundary components of $S\setminus A$.

Let $B_{i} \subset M_{(S, \phi)}$ be the binding component corresponding to $C_{i}$.
Take a tubular neighborhood, $N_{i}$, of $B_{i}$. 
The complement $M\setminus \bigcup_i N_i =: M_\phi$ is a mapping torus.
Since $\phi$ is periodic Thurston's work \cite{T} implies that   $M_\phi$ is Seifert-fibered. 
The fibers on $\partial N_i \subset \partial M_\phi$ are regular and each represents the homology class $p_{i}[\lambda_i] + q_{i}[\mu_i] \in H_{1}(\partial N_{i};\Z)$ where $\lambda_i$ corresponds to the longitude induced by the pages and $\mu_i$ corresponds to the boundary of a meridian disc of $N_i$.

The assumption $c(\phi,C_{i}) \neq 0$ implies that the Seifert fibration of $M_\phi$ extends to $N_{i}$, by adding the binding $B_i$ as an exceptional  fiber of the Seifert invariant $(\alpha_{i},\beta_{i})$ 
where $0 \leq \beta_{i} < \alpha_{i}=p_i$ and $\beta_{i} \equiv q_{i}$ (mod $p_i$). 
\end{proof}

\begin{remark}
The assumption $c(\phi,C_{i}) \neq 0$ is necessary. 
For example, if $S=S_{g,1}$ is a genus $g>0$ surface with one boundary then $M_{(S,id)} = \#_{2g} (S^{1} \times S^{2})$, which admits no Seifert fibered structure as $\mathbb RP^3\# \mathbb RP^3$ is the only Seifert fibered manifold that is not prime (cf. \cite{Hatcher}). 
\end{remark}

There is tight relationship among Nielsen-Thurston classification, fractional Dehn twist coefficients and geometric structures.
Thurston \cite{T} proved that the mapping torus $M_{\phi}$ of $\phi \in \Aut(S)$ is Seifert-fibered (toroidal, hyperbolic) if and only if $\phi$ is periodic (reducible, pseudo-Anosov). 
In \cite[Theorem 1.3]{i1} the first named author generalized this to the complements of closed braids in $S^{3}=M_{(D^2, id)}$ by using braid foliations. We prove parallel results for $M=M_{(S, \phi)}$ and $M-L$ the braid complement.

\begin{theorem}
\label{theorem:geometry}
Let $(S,\phi)$ be an open book decomposition of a $3$-manifold $M$.
Assume: 
\begin{itemize}
\item $\partial S$ is connected and $|c(\phi,\partial S)|>1$, or 
\item $|c(\phi, C)|>4$ for every boundary component $C$ of $S$.
\end{itemize}
Then we have the following: 
\begin{enumerate}
\item $M$ is toroidal if and only if $\phi$ is reducible.
\item $M$ is hyperbolic if and only if $\phi$ is pseudo-Anosov.
\item $M$ is Seifert fibered if and only if $\phi$ is periodic. 
\end{enumerate}
\end{theorem}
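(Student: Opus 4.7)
The plan is to establish the three forward implications ($\phi$ periodic $\Rightarrow$ $M$ Seifert fibered; $\phi$ reducible $\Rightarrow$ $M$ toroidal; $\phi$ pseudo-Anosov $\Rightarrow$ $M$ hyperbolic); the biconditionals then follow because the three Nielsen-Thurston types partition $\Aut(S,\partial S)$ and, for closed irreducible $3$-manifolds, the three geometric conclusions are mutually exclusive. Under either hypothesis, $M$ is irreducible by Corollary~\ref{irreducible-theorem}, and if $\phi$ is of irreducible type (periodic or pseudo-Anosov) then $M$ is atoroidal by Corollary~\ref{atoroidal-theorem}.

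The periodic case follows directly from Proposition~\ref{periodic-theorem}: both hypotheses force $c(\phi,C)\neq 0$ on every boundary component, hence $M$ is Seifert fibered (and is neither toroidal nor hyperbolic, by atoroidality and the standard fact that closed Seifert fibered $3$-manifolds are not hyperbolic). For the reducible case, I will pick an essential, non-boundary-parallel component $\gamma$ of a reducing multicurve for $\phi$, satisfying $\phi^k(\gamma)\simeq\gamma$ for some minimal $k\geq 1$. The orbit of $\gamma$ under $\phi$ sweeps out a torus $T\subset M_\phi\subset M$ whose intersection with each page is isotopic to $\gamma$, so $\F(T)$ consists entirely of essential c-circle leaves. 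Such $T$ is incompressible in the mapping torus $M_\phi$ by standard fibered $3$-manifold arguments, and to promote incompressibility to $M$ I will run the innermost-disk argument: a compressing disk in $M$ that is not a compressing disk in $M_\phi$ would have to cross the binding transversely, and tracing the boundary of a minimal such disk through a page shows that $\gamma$ must be parallel to a binding component in $S$, contradicting the choice of $\gamma$. Hence $T$ is incompressible in $M$, so $M$ is toroidal.

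For the pseudo-Anosov case, write $c(\phi,C_i)=p_i/q_i$ where $q_i$ is the number of prongs of the crown at $C_i$; the hypothesis gives $|p_i|>q_i\geq 1$, so $|p_i|\geq 2$ for every $i$. Theorem~\ref{theorem:GO} of Gabai-Oertel then produces an essential lamination in $M$ and forces $\pi_1(M)$ to be infinite. Combined with irreducibility and atoroidality, the geometrization theorem leaves only the possibilities that $M$ is hyperbolic or Seifert fibered over a hyperbolic-type base orbifold. I will rule out the Seifert alternative by appealing to Brittenham's theorem that essential laminations on Seifert fibered manifolds are isotopic to vertical or horizontal position, which is incompatible with the suspension lamination coming from a pseudo-Anosov on $S$; alternatively, since Thurston's theorem makes $M_\phi$ a finite-volume hyperbolic manifold, a Seifert fibration on the closed manifold $M$ would restrict to a Seifert structure on $M_\phi$ after removing the binding (a union of tamely embedded curves), contradicting hyperbolicity of $M_\phi$.

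The main obstacle is the hyperbolicity step in the pseudo-Anosov case: our results together with Theorem~\ref{theorem:GO} yield irreducibility, atoroidality, an essential lamination, and infinite fundamental group, but the final exclusion of Seifert fibered models with hyperbolic-type base requires invoking the geometrization theorem together with structural results about essential laminations on Seifert fibered spaces. A secondary technical point is the incompressibility argument in the reducible case, where the interaction of a would-be compressing disk with the binding must be analyzed carefully; the essential open book foliation framework of Section~\ref{sec: essential ob foliation} is the natural tool for this.
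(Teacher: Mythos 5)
Your overall strategy — prove the three implications from Nielsen--Thurston type to geometric type, then recover the biconditionals by a partition argument — matches the spirit of the paper, but two steps have real gaps.

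First, the claim that for a closed irreducible $3$-manifold the conclusions ``Seifert fibered,'' ``toroidal,'' and ``hyperbolic'' are mutually exclusive is false: many irreducible Seifert fibered manifolds (e.g.\ circle bundles over positive-genus surfaces, $T^3$) are toroidal. Hyperbolicity excludes the other two, but Seifert and toroidal overlap, so your forward implications together with the partition of $\Aut(S,\partial S)$ do \emph{not} by themselves yield $(\Rightarrow)$ of (3). The paper has to argue this case explicitly: it assumes $M$ is toroidal and Seifert fibered, uses (1) to get $\phi$ reducible, decomposes $S$ along the preserved multicurve $\mathcal{C}$, rules out pseudo-Anosov pieces using Proposition~\ref{periodic-theorem} and Theorem~\ref{theorem:geometry-braid} on the braid complement $M_X$, concludes all pieces are periodic, and finally shows the reducing twist powers $m_i$ must vanish (else $M$ would be a non-Seifert graph manifold). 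You would need to supply this argument.

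Second, your incompressibility argument for the suspension torus $T$ in the reducible case does not use the hypothesis on $|c(\phi,C)|$ at all, and that cannot be right. If $\phi$ acts close to the identity in a neighborhood of $\gamma$ (small fractional Dehn twist), the torus $T$ can compress in $M$ even when $\gamma$ is essential and non-boundary-parallel in $S$; the claim ``tracing the boundary of a minimal disk through a page shows $\gamma$ is parallel to a binding component'' is not available. The paper's argument is essential here: it supposes $\mathcal{T}=T$ compresses, so (using irreducibility) $T$ bounds a solid torus or a knot exterior $X$ on one side, shows $X\cap S_0$ is connected and meets $\partial S_0$, caps off $X\cap S_0$ along $\mathcal{C}$ to create a new open book $(\widehat{S'},\widehat\phi)$ in which the centers of the capping disks form a closed braid $L$ with $\partial D$ a cable of $L$, identifies $c(\widehat\phi_L,C)=c(\phi,C)$, and then invokes Corollary~\ref{cor:lower bound of g} (a genus lower bound in terms of FDTC for braids, itself derived from the estimates of Section~5) to show neither $L$ nor $\partial D$ can bound a disk, a contradiction. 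This is where the hypothesis $|c(\phi,C)|>4$ (or $>1$ in the connected case) is actually consumed, and it is the mechanism your proposal is missing.

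Your pseudo-Anosov discussion is close to the paper's (Gabai--Oertel for an essential lamination and infinite $\pi_1$, plus atoroidality and irreducibility, plus geometrization), and your observation that one must still rule out the small Seifert fibered models with infinite fundamental group is a fair criticism of the terseness of the paper's ``hence the hyperbolization theorem implies $M$ is hyperbolic''; your proposed fixes (Brittenham's theorem on essential laminations in Seifert spaces, or using that the lamination is genuine so $\pi_1$ is word-hyperbolic via Gabai--Kazez) are both reasonable ways to close that gap. Likewise, deriving $(\Rightarrow)$ of (1) from Corollary~\ref{atoroidal-theorem} (as the paper does) is cleaner than the exclusivity route. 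So the periodic and pseudo-Anosov directions of your proposal are essentially sound; the two gaps to repair are the reducible-to-toroidal incompressibility argument (which must invoke the FDTC hypothesis through the braid-genus estimate) and the toroidal-and-Seifert case of $(\Rightarrow)$ of (3).
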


\begin{theorem}
\label{theorem:geometry-braid}
Let $(S,\phi)$ be an open book decomposition of 3-manifold $M$ and $L$ be a closed braid in $(S,\phi)$.
Assume: 
\begin{itemize}
\item $\partial S$ is connected and $|c(\phi, L,\partial S )|>1$, or 
\item $|c(\phi, L, C)|>4$ for every boundary component $C$ of $S$.
\end{itemize}
Then we have the following: 
\begin{enumerate}
\item The complement $M-N(L)$ is toroidal if and only if $i(\beta_L)\circ \phi$ is reducible.
\item The complement $M-N(L)$ is hyperbolic if and only if $i(\beta_L)\circ \phi$ is pseudo-Anosov.
\item The complement $M-N(L)$ is Seifert fibered if and only if $i(\beta_L)\circ \phi$ is  periodic. 
\end{enumerate}
\end{theorem}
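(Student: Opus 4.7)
The strategy is to mirror the proof of Theorem~\ref{theorem:geometry} step-by-step, replacing each estimate on $c(\phi, C)$ by its braid counterpart developed in Section~\ref{sec:topol}. Write $X := M - N(L)$ and $X^{\circ} := M \setminus (L \cup B)$; the latter fibers over $S^{1}$ with fiber $S' = S \setminus \{n \text{ points}\}$ and monodromy $\phi_L$ (after passing to the power $\phi_L^m$ fixing the punctures). The task reduces to relating the Nielsen-Thurston type of $\phi_L$ to the geometric type of $X$.

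I would first establish the three $(\Leftarrow)$ implications. When $\phi_L$ is periodic, the hypothesis forces $c(\phi_L, C) \neq 0$ at every binding component, so the proof of Proposition~\ref{periodic-theorem} adapted to the braid setting gives a Seifert fibration of $X$: the $S^{1}$-action on $M_{\phi_L^m}$ descends, and extends across each binding solid torus with exceptional fiber data determined by $c(\phi_L, C_i)$. When $\phi_L$ is reducible, an essential $\phi_L$-invariant simple closed curve $\gamma \subset S'$ suspends to a torus $T \subset X^{\circ} \subset X$; if $T$ were compressible, the resulting compression would produce either an essential sphere or a smaller incompressible torus in $X$, and then Propositions~\ref{theorem:surface-braid} and \ref{theorem:surface_connected-braid}, together with Remark~\ref{3rd-atoridal-criterion}, would bound $|c(\phi_L, C)|$ from above, contradicting the hypothesis; thus $T$ is essential and $X$ is toroidal.

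The pseudo-Anosov case requires more care. By Remark~\ref{3rd-atoridal-criterion}, $X$ is irreducible and atoroidal. The hypothesis $|c(\phi_L, C)| > 1$ implies that, in the notation of Theorem~\ref{theorem:GO}, the numerator $|p_i|$ satisfies $|p_i| > q_i \geq 1$ for every binding component, so $|p_i| \geq 2$; applying Gabai--Oertel's theorem to $\phi_L$ yields an essential lamination in $X$, and in particular $\pi_1(X)$ is infinite. Since $X^{\circ}$ is fibered, $X$ contains incompressible surfaces and is Haken. A Seifert fibration of $X$ would, after restriction to $X^{\circ}$, force $\phi_L$ to be freely isotopic to a periodic map on $S'$, contradicting the pseudo-Anosov hypothesis; Thurston's geometrization of atoroidal Haken manifolds with infinite fundamental group then forces $X$ to be hyperbolic.

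The $(\Rightarrow)$ directions follow from the Nielsen-Thurston trichotomy: $\phi_L^m$ has exactly one type, and the three $(\Leftarrow)$ conclusions produce mutually exclusive geometric conclusions for $X$. The main obstacle I anticipate is the pseudo-Anosov case: cleanly verifying that the FDTC hypothesis implies the numerical condition of Theorem~\ref{theorem:GO}, that $X$ is Haken so geometrization applies, and that the resulting essential lamination genuinely obstructs any Seifert fibered structure on $X$. A secondary difficulty is upgrading the suspension torus $T$ in the reducible case to an incompressible one in $X$ itself rather than merely in $X^{\circ}$.
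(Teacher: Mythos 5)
Your overall architecture matches the paper's (the paper in fact proves Theorem~\ref{theorem:geometry} and simply asserts that Theorem~\ref{theorem:geometry-braid} is proved the same way), and your treatment of the periodic and pseudo-Anosov directions is essentially correct. But there is a genuine gap at the heart of the argument, namely the reducible $(\Leftarrow)$ direction. When the suspension torus $T$ of a reduction multicurve is compressible, compressing it yields a sphere; if that sphere is essential you get a contradiction with irreducibility, but if it bounds a ball you do \emph{not} obtain ``a smaller incompressible torus'' --- you obtain that $T$ bounds a solid torus (or lies in a knot complement inside a ball). At that point Propositions~\ref{theorem:surface-braid} and \ref{theorem:surface_connected-braid} are of no use, since they require a \emph{closed incompressible} surface and the compression has produced none. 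The paper's actual mechanism is different: one shows the compressible side meets the binding, intersects it in a connected piece of the page, caps off the reduction curves by discs to form a new open book in which the cores of the discs form a closed braid $L'$ with $c(\widehat{\phi}_{L'},C)=c(\phi_L,C)$, and then invokes the genus bound Corollary~\ref{cor:lower bound of g} to show that neither $L'$ nor the cabled curve $\partial D$ can bound a disc. This capping-off plus genus-bound step is the essential content (the paper says so explicitly before Theorem~\ref{theorem:genus}), and your proposal omits it entirely; you flag it as a ``secondary difficulty'' but it is the primary one.

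A second, smaller gap is in the $(\Rightarrow)$ directions: the three geometric conclusions are \emph{not} mutually exclusive, because a Seifert fibered manifold can be toroidal. Hence ``Seifert fibered $\Rightarrow$ periodic'' does not follow from the trichotomy alone. The paper handles the toroidal Seifert-fibered case separately: reducibility of the monodromy follows from (1), one then shows each piece of the reduction must be periodic (a pseudo-Anosov piece would make a component of the complement of the suspension tori hyperbolic --- and here the paper actually invokes part (2) of the braid version Theorem~\ref{theorem:geometry-braid}, so the two theorems are proved in tandem), and finally one argues that nonzero twisting along a reduction curve would make the manifold a non-Seifert-fibered graph manifold. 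Your proposal needs this additional argument to close part (3).
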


As the proofs of Theorems \ref{theorem:geometry} and \ref{theorem:geometry-braid} are almost the same, we prove Theorem \ref{theorem:geometry}.

\begin{proof}[Proof of Theorem \ref{theorem:geometry}] 

A crucial point of the proof is the equivalence (1). 
Once we prove (1) the other equivalences follow from the geometrization theorem.

$(\Rightarrow)$ of (1) follows from Corollary~\ref{atoroidal-theorem}.

$(\Leftarrow)$ of (1): 
Assume that $\phi$ is reducible. 
There exists an essential simple closed curve $c$ in $S$ such that $\phi^n(c)=c$ for some $n\in \mathbb N$. 
Let $c_i = \phi^{i-1}(c)$ where $i=1,\ldots,n$. 
We may assume that $c_i$ are mutually disjoint. 
Let $\mathcal{C}=c_{1} \cup \cdots \cup c_{n}$. 
Then $\phi(\mathcal C)=\mathcal C$ and 
$\mathcal{C} \times[0,1] \subset S \times [0,1]$ gives rise to an embedded torus $\mathcal{T} = \mathcal{T}_{\mathcal C}$ in $M$. 
Our goal is to prove $\mathcal{T}$ is incompressible.

Assume contrary that $\mathcal{T}$ is compressible. 
Compressing $\mathcal{T}$ yields an embedded sphere in $M$, which bounds a $3$-ball in $M$ as $M$ is irreducible by Corollary~\ref{irreducible-theorem}. 
A compressible torus in an irreducible $3$-manifold always bounds a solid torus. 
Let $X$ denote a solid torus bounded by $\mathcal T$ with a compression disc, $D$ $(\subset X)$. 


\begin{claim}\label{claim:X}
Let $S_0$ be the page $S\times\{0\}$ of $(S, \phi)$.  
\begin{enumerate}
\item[(i)]
$\partial S_0 \cap (X \cap S_{0}) \neq \emptyset$.  
\item[(ii)]
$X \cap S_{0}$ is connected.
\end{enumerate} 
\end{claim}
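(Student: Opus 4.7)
The plan is to argue by contradiction in each case, using topological properties of the torus $\mathcal{T}$ and the fibration structure on $X$ induced by the open book.

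For part (i), suppose $X \cap \partial S_0 = \emptyset$, so that $X$ is disjoint from the binding $B$ and hence $X \subset M \setminus B = M_\phi$. Since $\mathcal{C}$ is an essential $\phi$-invariant multi-curve in $S$, a standard innermost-circle and innermost-arc argument shows that its suspension $\mathcal{T}$ is incompressible (indeed $\pi_{1}$-injective) in $M_\phi$: a hypothetical compressing disc for $\mathcal{T}$ in $M_\phi$, placed in general position with the pages $S_t$, would yield via innermost considerations a component of $\mathcal{C}$ bounding a disc in $S$, contradicting essentialness. The assumed compressing disc $D \subset X \subset M_\phi$ is precisely such a forbidden disc, giving the contradiction.

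For part (ii), we exploit the fact that every $\phi \in \Aut(S,\partial S)$ fixes $\partial S$ pointwise. By (i), fix a binding component $B_{i_0} \subset X$, and suppose for contradiction that $X \cap S_0 = P_1 \sqcup \cdots \sqcup P_k$ with $k \geq 2$. Since each page meets $\mathcal{T}$ in the multi-curve $\mathcal{C}$, the torus $\mathcal{T}$ is a subbundle of the open book fibration, and hence so is $X$ restricted to $M \setminus B$: each $S_t$ meets $X$ in (a copy of) $X \cap S_0$, and going once around $S^1$ the monodromy $\phi$ maps $X \cap S_0$ to itself. Thus $\phi$ permutes $\{P_1, \ldots, P_k\}$, and connectedness of $X$ forces this action to be transitive, hence a single $k$-cycle with no fixed point. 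On the other hand, the circle $B_{i_0} \subset \partial S$ has a unique ``interior side'' in $S$, so it lies on the boundary of a unique component $P_{j_0}$ of $X \cap S_0$. Since $\phi(B_{i_0}) = B_{i_0}$, the circle $B_{i_0}$ also lies on the boundary of $\phi(P_{j_0})$; by uniqueness $\phi(P_{j_0}) = P_{j_0}$, contradicting the absence of $\phi$-fixed components. Hence $k = 1$.

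The main subtlety is the standard incompressibility fact invoked in (i), which is classical for fibered $3$-manifolds and adapts directly to the multi-curve case here. The argument for (ii) is by contrast purely combinatorial, and becomes straightforward once one has the fibered description of $X$ coming from the saturation of $\mathcal{T}$ by the pages.
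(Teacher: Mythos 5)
Your proof is correct, and your argument for (ii) is the paper's argument verbatim (up to phrasing): the component of $X\cap S_0$ meeting $\partial S_0$ is $\phi$-fixed because $\phi|_{\partial S}=\mathrm{id}$, and a $\phi$-fixed component together with $k\geq 2$ would disconnect $X$. Your argument for (i) takes a mildly different route. The paper observes that $X\cap\partial S_0=\emptyset$ forces $\partial(X\cap S_0)=\mathcal{C}$, so $X$ is itself a surface bundle over $S^1$ with non-disc fiber (each $c_i$ being essential), and therefore $\partial X=\mathcal{T}$ is incompressible \emph{in} $X$, contradicting $D\subset X$. You instead use that $X\cap\partial S_0=\emptyset$ places $X$, and with it $D$, inside the mapping torus $M_\phi$, and then invoke incompressibility of the vertical torus $\mathcal{T}$ \emph{in} $M_\phi$ (via the innermost-circle argument on a compressing disc in general position with the pages, or equivalently via $\pi_1$-injectivity of the suspension of an essential curve). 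Both contradictions are instances of the same phenomenon --- vertical surfaces over essential curves in a surface bundle are incompressible --- but your version appeals to a global fact about $M_\phi$, whereas the paper's appeals to the more local fact that a fibered $3$-manifold with non-disc fiber has incompressible boundary. The paper's version is slightly more self-contained; yours is somewhat more standard to cite and would work unchanged even if $X$ were a priori allowed to touch the binding in a region away from $D$, so long as $D$ itself avoids $B$.
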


\begin{proof}

(i): If $\partial S_0 \cap (X \cap S_0) = \emptyset$ then $\partial(X \cap S_0)=\mathcal{T} \cap S_0 = \mathcal C$. 
Thus, $X$ is a surface bundle over $S^{1}$ where the fiber is a connected component $S'$ of $X \cap S_{0}$. 
Since every $c_i$ is essential, $S'$ is not a disc. So $\partial X$ is not compressible in $X$, which is a contradiction.

(ii): Since $\phi|_{\partial S}= id$ each component of $X \cap S_0$ intersecting $\partial S_0$ is mapped to itself under $\phi$. Thus if $X \cap S_0$ is not connected then $X$ is not connected, which is a contradiction.
\end{proof}

\begin{figure}[htbp]
 \begin{center}
\SetLabels
(.05*.95) $C_1$\\
(.6*1) $C_2$\\
(.3*.2) $c_1$\\
(.66*.6) $c_2$\\
(.4*.5) $S' = X \cap S_0$\\
(.2*.75) $\gamma_2$\\
 \endSetLabels
\strut\AffixLabels{\includegraphics*[width=60mm]{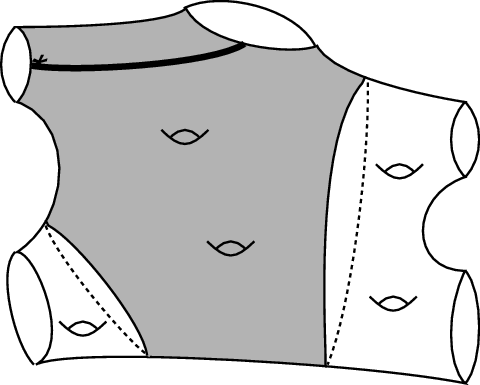}}
\caption{}
\label{fig:X}
\end{center}
\end{figure}

Let $S'=X \cap S_0$ and put $\partial S' = C_1 \cup \dots \cup C_k \cup c_1\cup \dots \cup c_n$, where $C_1 \cup \dots \cup C_k=\partial S_0 \cap \partial S'$ (see Figure~\ref{fig:X}). 
By Claim~\ref{claim:X}, $S'$ is connected and $k>0$. 
Denote the restriction of $\phi$ on $S'$ by $\phi'$.
Take a base point $* \in S'$ on $C_1$ and an arc $\gamma_{i} \subset S'$ that connects $*$ and a point on $C_i$ for $i=2,\ldots,k$. 
Then a presentation of the fundamental group of $X$ can be given as 

\begin{equation*}
\pi_1(X) = \left\langle 
\begin{array}{c|c}
a_i, b_i, C_j, c_l &
\begin{array}{l}
\prod_{i=1}^{g'}[a_i, b_i] \prod_{j=1}^k C_j 
\prod_{l=1}^n c_l, \: 
a_i \phi'_*(a_i^{-1}), \: 
b_i \phi'_*(b_i^{-1}), \\
\\
c_l \phi'_*(c_l^{-1}), \: 
\gamma_j \phi'_*(\gamma_j^{-1})
\end{array}
\end{array}
\right\rangle
\end{equation*}
where $a_1, \dots, a_{g'}, b_1,\dots,b_{g'}, C_1,\dots,C_k, c_1,\dots,c_n$ are the standard generators of $\pi_1(S')$. 
The term $\gamma_j \phi'_*(\gamma_j^{-1})$ is represented by a closed curve because $\phi' = id$ near $C_1,\dots,C_k$. 
See \cite{eo} for a more detailed proof.  
This presentation shows that the curves $c_1,\dots,c_n$ are not null-homotopic in $X$.

Cap off $S'$ with discs along $c_1,\dots, c_n$ and call the resulting surface $\widehat{S'}$. 
Let $\widehat{\phi}$ be the homeomorphism of $\widehat{S'}$ naturally extending $\phi'$ to the attached discs. 
Consider the open book $(\widehat{S'},\widehat{\phi})$ and denote $\widehat{M} = M_{(\widehat{S'},\widehat{\phi})}$. 
The centers of the attached discs give rise to a closed $n$-braid, $L$, in $\widehat{M}$ with respect to the open book $(\widehat{S'},\widehat{\phi})$.

Since $c_{i}$ are not null-homotopic in $X$, we may put the compression disc $D$ of $\mathcal T = \partial X$ by isotopy so that $\partial D$ is positively transverse to the pages. 
Therefore $\partial D$ yields a closed braid, $K$, in $\widehat M$. 
Note that $K$ bounds a disc since $D$ is properly embedded in $X$ and $X$ can be embedded in $\widehat M$. Moreover, $K$ is a cable of $L$  
since $L$ is a core of the solid torus bounded by the image of $\mathcal T$ under the embedding $X\hookrightarrow \widehat M$. 

For each component $C$ of $\partial S_0 \cap \partial S'$ we have
\[ c(\widehat\phi, K, C) = c(\widehat\phi, L, C) = c(\phi|_{S'}, C) = c(\phi,C)
\]
where the first equality holds by Remark~\ref{remark:cable} and the second equality holds since $L$ is protected by the curves $c_1,\dots,c_n$ and $c_i$ are not parallel to $C$. 
If $\partial S$ is connected and $|c(\phi, \partial S)|>1$ then by Corollary~\ref{cor:lower bound of g}-(2a) we have $g(K)\neq 0$. 
If $|c(\phi, C)|>4$ for every boundary component $C$ of $S$ then by Corollary~\ref{cor:lower bound of g}-(1) we have $g(K)\neq 0$. Therefore $K$ cannot bound a disc in $\widehat M$, which is a contradiction.

For each component $C$ of $\partial S_0 \cap \partial S'$ we have
\[ 
c(\widehat\phi, L, C) = c(\phi|_{S'}, C) = c(\phi,C),
\]
where the first equality holds since $L$ is near the orbits of the curves $c_1,\dots,c_n$ and $c_i$ are not parallel to $C$. 

If $\widehat{S'}$ is a disc then Corollary~\ref{cor:lower bound of g} and our assumption on FDTC imply that $L$ is not an unknot. 
Hence its cable $K$ is not an unknot either, which is a contradiction.

If $\widehat{S'}$ is a not a disc then by Remark~\ref{remark:cable} we have $c(\widehat\phi, K, C) = c(\widehat\phi, L, C)$, i.e., $c(\widehat\phi, K, C) = (\phi,C)$. 
Corollary~\ref{cor:lower bound of g} and our assumption on FDTC imply that $g(K)\neq 0$, which is a contradiction.

$(\Leftarrow)$ of (2):
Gabai and Oertel's Theorem~\ref{theorem:GO} implies that $M$ contains an essential lamination, hence $M$ has infinite fundamental group.
Further, Corollary~\ref{atoroidal-theorem} shows that $M$ is atoroidal and irreducible.  Hence the hyperbolization theorem implies that $M$ is hyperbolic.

$(\Rightarrow)$ of (2):
If $M$ is hyperbolic then $\phi$ is irreducible by (1). 
Thus $\phi$ is pseudo-Anosov by Proposition~\ref{periodic-theorem}. 

$(\Leftarrow)$ of (3) is a consequence of Proposition~\ref{periodic-theorem}.

$(\Rightarrow)$ of (3):
If $M$ is atoroidal then (1) and (2) imply that $\phi$ is periodic. 

In the following, we assume that $M$ is toroidal and Seifert-fibered. 
By (2), $\phi$ is reducible so $\phi$ preserves a  multi-curve $\mathcal{C}$ in $S$. We may choose $\mathcal{C} = c_1 \cup \cdots \cup c_k$ so that for each component $X$ of $S\setminus\mathcal{C}$ the restriction $\phi|_{X}$ is of irreducible. Let $\mathcal{T} \subset M$ be the suspension torus (or tori) of $\mathcal{C}$. 

Assume that $\phi|_{X}$ is pseudo-Anosov for a component $X$. Let $M_X$ be the component of $M\setminus\mathcal{T}$ containing $X$. 
If $X \cap \partial S = \emptyset$ then $M_X$ is the mapping torus of a pseudo-Anosov map so it is hyperbolic, which is a contradiction.   
Therefore, $X \cap \partial S \neq \emptyset$. 
Let  $C$ be a component of $X \cap \partial S$. 
By the above argument for $(\Leftarrow)$ of (1), we may regard $M_X$ as the complement of a closed braid $L$. 
Since $\phi$ is reducible we have $c(\widehat{\phi}, L, C)=c(\phi|_X, C) = c(\phi, C)$. 
Now Theorem~\ref{theorem:geometry-braid}-(2) implies that $M_X$ is hyperbolic, which is also a contradiction.

Therefore, $\phi$ is periodic for every component of $S\setminus\mathcal{C}$, hence every component of $M \setminus \mathcal{T}$ is Seifert-fibered. 
Moreover, there exist integers $n>0,$ $m_1, \ldots, m_k$ such that $\phi^{n}$ is freely isotopic to $T_{c_1}^{m_1}\circ \cdots \circ T_{c_k}^{m_k}$.

If some of $m_i$'s are non-zero then at a common torus boundary of two Seifert fibered components their regular fibers are not identified.  
In other words, the Seifert fibration structures of components  do not extend to $M$. 
This means $M$ is a graph manifold but not Seifert-fibered, which is a contradiction.

Hence $m_1=\dots=m_k=0$ and $\phi$ is periodic.
\end{proof}

\begin{corollary}
Theorem \ref{theorem:geometry} implies that if $M_{(S,\phi)}$  admits sol-geometry then $c(\phi,C)\leq 4$ for some boundary component $C$ of $S$.  
\end{corollary}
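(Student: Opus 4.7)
The plan is to argue the contrapositive: assume $|c(\phi,C)| > 4$ for every boundary component $C$ of $S$, and deduce that $M = M_{(S,\phi)}$ does not admit Sol geometry. A Sol manifold is, by the uniqueness of Thurston's eight geometries, neither Seifert-fibered nor hyperbolic; on the other hand it is a toroidal graph manifold whose JSJ pieces are all Seifert-fibered, and in particular it contains an incompressible torus (for instance, the fiber torus of its Sol fibration).

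First, Theorem~\ref{theorem:geometry}(2) and (3) applied under the assumed FDTC hypothesis force $\phi$ to be neither pseudo-Anosov nor periodic, hence reducible. Mimicking the argument of $(\Rightarrow)$ of (3) in the proof of Theorem~\ref{theorem:geometry}, I would choose a $\phi$-invariant multi-curve $\mathcal{C} = c_1 \cup \cdots \cup c_k$ so that each restriction $\phi|_X$ is irreducible on a component $X$ of $S \setminus \mathcal{C}$. If any $\phi|_X$ were pseudo-Anosov, then Theorem~\ref{theorem:geometry-braid} applied to the corresponding piece $M_X$---using $c(\widehat{\phi}_L, C) = c(\phi,C)$ for every $C \subset X \cap \partial S$, so that the $|c(\phi,C)| > 4$ hypothesis persists on each boundary component of the relevant page---would make $M_X$ hyperbolic, contradicting the fact that a Sol manifold has only Seifert-fibered JSJ pieces. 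Consequently every $\phi|_X$ is periodic.

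For the concluding contradiction I would take the fiber torus $T$ of the Sol fibration of $M$ (genus-one, incompressible) and, by Theorem~\ref{theorem:weak}, isotope $T$ so that $\mF(T)$ is essential. If $T$ meets the binding in $2n > 0$ points, Theorem~\ref{theorem:surface}(2) with $g = 1$ instantly yields
\[
|c(\phi, C)| \leq 4 + \left\lfloor \frac{4g-4}{n} \right\rfloor = 4
\]
for some $C \subset \partial S$, directly contradicting the hypothesis.

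The hard part I anticipate is the degenerate case in which the isotoped $T$ is disjoint from the binding. Here $T$ sits in the mapping torus part $M_\phi$ and must be the suspension of a $\phi$-invariant essential simple closed curve, i.e., the suspension of some $c_i$; Theorem~\ref{theorem:surface} then does not directly apply. To close this case I would exploit that each $\phi|_X$ is periodic, so by Proposition~\ref{prop:image_c} the denominator of $c(\phi|_X, C) = c(\phi, C)$ is at most $4g_X + 2$, and combine this with the Anosov gluing data $(m_1, \ldots, m_k)$ forced by the Sol structure (via $\phi^n = T_{c_1}^{m_1} \cdots T_{c_k}^{m_k}$) to pin down some boundary component $C$ with $|c(\phi, C)| \leq 4$, again contradicting the standing hypothesis and completing the argument.
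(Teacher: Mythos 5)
The paper offers no argument for this corollary beyond the phrase ``Theorem~\ref{theorem:geometry} implies,'' and you are right to sense that the bare statement of Theorem~\ref{theorem:geometry} is not enough: a Sol manifold is toroidal, so under the hypothesis $|c(\phi,C)|>4$ for all $C$ one only concludes that $\phi$ is reducible, which is perfectly consistent with the three equivalences. Your first two steps --- ruling out periodic and pseudo-Anosov $\phi$ via parts (2) and (3), then running the argument of $(\Rightarrow)$ of (3) to show every piece $\phi|_X$ is periodic (a pseudo-Anosov piece would give a hyperbolic piece of the JSJ decomposition, impossible in a Sol manifold) --- are sound and exactly parallel to the paper's proof of $(\Rightarrow)$ of (3).

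The gap is in your final step. First, the ``instant'' application of Theorem~\ref{theorem:surface}(2) never fires: since $\phi$ is reducible, the essential torus of a Sol manifold $M_{(S,\phi)}$ is isotopic to a suspension torus $\mathcal{T}_{c_i}$, which is disjoint from the binding, so $\F(T)$ has no elliptic points and Theorem~\ref{theorem:surface} says nothing (this is precisely the degenerate case you flag). Second, your fallback aims at the wrong contradiction: knowing that every $\phi|_X$ is periodic cannot ``pin down some $C$ with $|c(\phi,C)|\leq 4$,'' because periodic pieces can have arbitrarily large fractional Dehn twist coefficient (e.g.\ $\phi|_X$ freely isotopic to a large power of a boundary Dehn twist), and Proposition~\ref{prop:image_c} only constrains denominators. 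The hypothesis $|c(\phi,C)|>4$ has already done all its work in steps one and two; the concluding contradiction must be with the Sol structure itself. Namely: since all pieces are periodic, $\phi^{n}$ is freely isotopic to $T_{c_1}^{m_1}\cdots T_{c_k}^{m_k}$; if all $m_i=0$ then $\phi$ is periodic, contradicting step one, and if some $m_i\neq 0$ then $M$ is a graph manifold whose Seifert pieces are glued along the suspension tori by powers of Dehn twists, and one must argue that no such manifold is Sol --- for instance because each piece of a Sol manifold cut along its essential tori is $T^{2}\times I$ (or a twisted $I$-bundle over the Klein bottle), which carries no Seifert fibration with an exceptional fiber of multiplicity greater than $2$, whereas by the proof of Proposition~\ref{periodic-theorem} the filled binding component in such a piece is an exceptional fiber of multiplicity equal to the numerator of $c(\phi,C)$, which exceeds $4$. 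That missing comparison between the Dehn-twist gluing data and the Anosov gluing of a Sol manifold is the actual content of the corollary, and your sketch does not supply it.
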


By stabilizing an open book sufficiently many times one can always make $\partial S$ connected while preserving the topological type of the underlying $3$-manifold $M_{(S, \phi)}$.
However, as shown in the next proposition (see also \cite[Theorem 2.16]{kr}), stabilized open books have ``small'' fractional Dehn twist coefficients. 
This partially explains why we need an open book with ``large'' $c(\phi, C)$ to extract properties of $M_{(S, \phi)}$.

\begin{proposition}
If an open book $(S,\phi)$ is a stabilization of an open book $(S',\phi')$, then there exists a component $C$ of $\partial S$ such that $|c(\phi,C)|\leq 1$. Moreover, if $\partial S$ is connected  then $|c(\phi, \partial S)| \leq \frac{1}{2}$.
\end{proposition}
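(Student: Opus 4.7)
The plan is to apply the Key Lemma (Lemma~\ref{lemma:fracDehn}) to the co-core of the stabilization $1$-handle. Recall that a stabilization $(S,\phi)$ of $(S',\phi')$ is obtained by attaching a $1$-handle $H$ to $S'$ along two disjoint arcs in $\partial S'$ to form $S = S' \cup H$, and by setting $\phi = \phi' \circ T_c^{\pm 1}$, where $\phi'$ is extended by the identity on $H$ and $c \subset S$ is a simple closed curve intersecting the co-core arc $\gamma$ of $H$ transversely in one point. Let $p,q \in \partial S$ denote the endpoints of $\gamma$, and let $C \subset \partial S$ be the boundary component containing $p$.

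First I would observe that $\gamma$ is an essential arc in $S$: cutting $S$ along a neighborhood of $\gamma$ recovers a surface isotopic to $S'$, so $\gamma$ is not boundary-parallel. The central step is to establish the ordering
\[
T_C^{-1}(\gamma) \;\leq\; \phi(\gamma) \;\leq\; T_C(\gamma)
\]
near $p$. Since $\phi'|_H = \mathrm{id}$, we have $\phi'(\gamma) = \gamma$ and therefore $\phi(\gamma) = \phi'\bigl(T_c^{\pm 1}(\gamma)\bigr)$. Because $|\gamma \cap c| = 1$, the arc $T_c^{\pm 1}(\gamma)$ is obtained from $\gamma$ by cutting at the unique intersection point and splicing in one signed copy of $c$; the subsequent application of $\phi'$ fixes the parts lying in $H$ (in particular a small collar of $p$) and drags only the spliced copy of $c$ through $S'$. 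A direct local analysis in a collar of $C$ near $p$ then shows that $\phi(\gamma)$ winds around $C$ by at most one full turn relative to $\gamma$, giving the displayed inequality. The Key Lemma applied to $\gamma$ at $p$ then yields $|c(\phi,C)| \leq 1$, proving the first assertion.

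For the sharper bound in the connected case $\partial S = C$, both endpoints $p$ and $q$ of $\gamma$ lie on $C$. I would view $\gamma$ also as an essential arc based at $q$ (traversed in reverse) and run the same analysis there, obtaining an analogous bound near $q$. Because both bounds refer to the single coefficient $c(\phi,\partial S)$, one can combine them by tracking the relative winding of $\phi(\gamma)$ at the two endpoints simultaneously---or equivalently by applying the Key Lemma to $\phi^2(\gamma)$ via a doubling argument in which each half of the doubled co-core contributes at most one unit of twisting to $\phi^2$---to obtain $|c(\phi,\partial S)| \leq \tfrac12$.

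The hard part will be the precise local analysis in the central step: one must carefully trace how the splicing of $c$ via $T_c^{\pm 1}$, followed by the action of $\phi'$, shifts $\gamma$ in a collar of $C$ around $p$, doing this uniformly for both positive and negative stabilizations. The refinement from bound $1$ to bound $\tfrac12$ in the connected case is the second delicate point, as it requires a coherent comparison of the winding behavior of $\phi(\gamma)$ at both endpoints of the single arc $\gamma$ rather than merely intersecting two independent estimates.
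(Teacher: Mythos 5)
Your overall strategy---feeding an arc attached to the stabilization handle into the Key Lemma (Lemma~\ref{lemma:fracDehn})---is the same as the paper's, but your choice of test arc creates a genuine gap at exactly the step you flag as ``the hard part.'' You apply the Key Lemma to the co-core $\gamma$ itself, so you must compare $\phi(\gamma)=\phi'(T_c^{\pm1}(\gamma))$ with $T_C^{\pm1}(\gamma)$. The arc $T_c^{\pm1}(\gamma)$ leaves the handle and enters $S'$, where the \emph{arbitrary} diffeomorphism $\phi'$ acts on it; $\phi'$ can drag the spliced copy of $c$ back into a collar of $C$ and wind it around $C$ many times. The relation $\geq$ is defined only after realizing minimal geometric intersection, so it is a global condition on isotopy classes: the fact that $\phi(\gamma)$ literally coincides with $\gamma$ in a small neighborhood of $p$ says nothing about which side of $T_C(\gamma)$ its tightened representative exits $p$ on. In fact, even granting the conclusion $|c(\phi,C)|\leq 1$, the displacement $\Theta_C(\phi)(x)-x$ of the single point corresponding to $\gamma$ under the Nielsen--Thurston homomorphism is only constrained to lie in $(-2,2)$, so the inequality $T_C^{-1}(\gamma)\geq\phi(\gamma)\geq T_C(\gamma)$ is not a soft consequence and cannot be delivered by a ``local analysis in a collar of $C$ near $p$''; it is not even clear that it holds. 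The paper sidesteps all of this by taking the test arc to be $T_c^{\mp1}(\gamma)$ rather than $\gamma$: then $\phi(T_c^{\mp1}\gamma)=\phi'(\gamma)=\gamma$ \emph{on the nose} (the co-core lies in the handle, where $\phi'$ is the identity), and the required comparison $T_C^{-1}(T_c^{-1}\gamma)\geq\gamma>T_C(T_c^{-1}\gamma)$ involves only three explicitly drawn arcs, with no appearance of $\phi'$.

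The $\tfrac{1}{2}$ bound in the connected case is a second gap. Running your one-endpoint estimate again at $q$ can only reproduce $|c(\phi,\partial S)|\leq 1$; the Key Lemma extracts one two-sided bound per test arc, and there is no mechanism by which estimates ``at the two endpoints of the same arc'' average to $\tfrac{1}{2}$, nor does the ``doubling'' of the co-core obviously make each half contribute one unit to $\phi^2$. The paper's actual argument applies the Key Lemma to $\phi^{2}$ with the same test arc, computing $\phi^{2}(T_c^{-1}\gamma)=\phi'(T_c\gamma)$ exactly, and then uses the essential geometric input your sketch is missing: when $\partial S$ is connected the two boundary components $C_{1},C_{2}$ of $S'$ become interior curves of $S$, and $T_{C_{1}}^{N}T_{C_{2}}^{N}$ (with $N$ dominating $|c(\phi',C_i)|$) moves any essential arc less than one full boundary twist $T_{\partial S}$ (Figure~\ref{fig:stabilization}). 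This sandwiches $\phi^{2}(T_c^{-1}\gamma)$ between $T_{\partial S}^{\mp1}(T_c^{-1}\gamma)$ and yields $|c(\phi^{2},\partial S)|\leq 1$, hence $|c(\phi,\partial S)|\leq\tfrac{1}{2}$.
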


\begin{proof}
By definition of stabilization we have $\phi=\phi' \circ T_{\gamma}^{\pm 1}$, where $\gamma$ denote a core of the plumbed annulus.  
We may regard $S' \subset S$ and $\phi' \in \Aut(S, \partial S)$. 
In the following we suppose that $\phi=\phi' \circ T_{\gamma}$ (similar arguments hold when $\phi=\phi' \circ T_{\gamma}^{-1}$).  

Let $\delta$ be the co-core of the plumbed annulus, i.e., an essential arc in $S$.  
Let $C$ be a component of $\partial S$ that contains (at least) one of the endpoints of $\delta$. 
By direct calculation of the images we get 
$$T_{C}^{-1}(T_{\gamma}^{-1}\delta) \geq \delta \geq T_{C}(T_{\gamma}^{-1} \delta).$$ 
Since $\phi=\phi' \circ T_{\gamma}$ and $\phi'$ is identity on $S \setminus S'$ we have $\phi(T_{\gamma}^{-1}\delta) = \phi' (\delta) = \delta$. 
This shows
\[  T_{C}^{-1}(T^{-1}_\gamma \delta) \geq \phi(T^{-1}_\gamma \delta) \geq T_{C}(T^{-1}_\gamma \delta) \]
hence by Lemma~\ref{lemma:fracDehn} we have $|c(\phi,C)| \leq 1$.

Next we assume that $\partial S$ is connected, which is possible  only if $\partial S'$ has exactly two  components, say $C_{1}$ and $C_{2}$. 
Take an integer $N \geq \max\{ |c(\phi',C_{1})|, |c(\phi',C_{2})|\}$. 
Viewing $C_{1}$ and $C_{2}$ as simple closed curves embedded in $S$, for any essential arc $l \subset S$ we have:
\begin{equation}\label{eq:C1C2}
T_{\partial S}^{-1} (l) \geq 
T_{C_{1}}^{-N}T_{C_{2}}^{-N}(l) \geq \phi'( l ) 
\end{equation}
\begin{equation}\label{eq:C1C2-2}
\phi'( l ) \geq T_{C_{1}}^{N}T_{C_{2}}^{N} (l) \geq T_{\partial S} (l)
\end{equation}
Observe that
\[ 
T_{\partial S}^{-1} (T_\gamma^{-1} \delta)
\geq T_{\partial S}^{-1} (T_\gamma \delta)
\geq \phi' (T_\gamma \delta) 
=\phi^2(T^{-1}_\gamma \delta)
\]
where the second inequality follows from (\ref{eq:C1C2}) with $l=T_{\gamma} \delta$, and 
\[
\phi^2(T^{-1}_\gamma \delta) 
= \phi'(T_\gamma \delta) 
\geq T_{C_{1}}^NT_{C_{2}}^N (T_\gamma \delta)
\geq T_{\partial S} (T_\gamma^{-1} \delta) 
\]
where the first inequality follows from (\ref{eq:C1C2-2}) and the last inequality is justified by 
Figure~\ref{fig:stabilization}. 
Therefore, we have $T_{\partial S}^{-1} (T_\gamma^{-1} \delta)
\geq \phi^2(T^{-1}_\gamma \delta) \geq T_{\partial S} (T_\gamma^{-1} \delta)$. 
By Lemma~\ref{lemma:fracDehn} and Proposition~\ref{prop:property_of_c}-(1) we conclude $|c(\phi,\partial S)| \leq \frac{1}{2}$.
\begin{figure}[htbp]
 \begin{center}
\SetLabels
(.52*.95) $\delta$\\
(.9*.95) $T_{\partial S}(T_\gamma^{-1} \delta)$\\
(1.05*.76) $T_{\partial S} (T_\gamma^{-1} \delta)$\\
 \endSetLabels
\strut\AffixLabels{\includegraphics*[width=70mm]{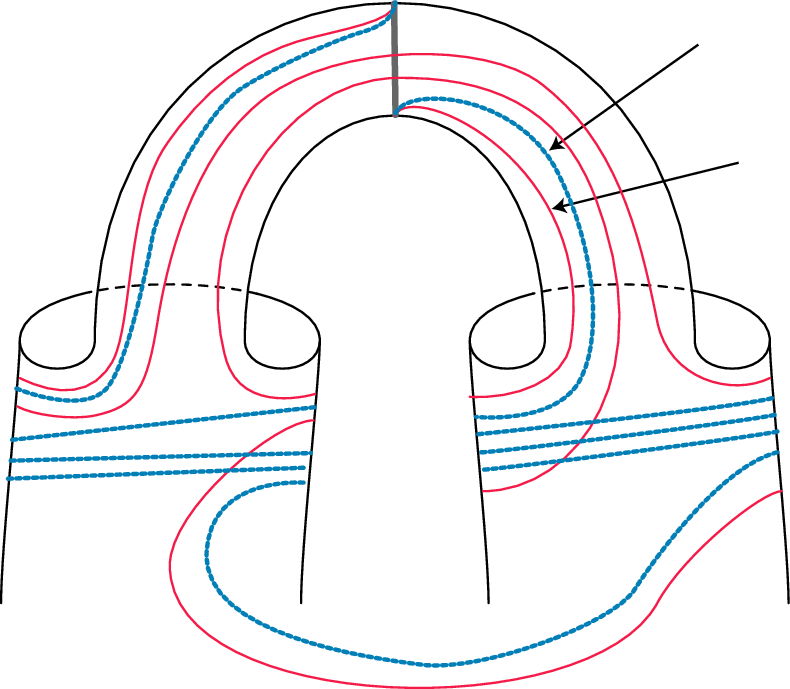}}
 \caption{$T_{C_{1}}^NT_{C_{2}}^N (T_\gamma \delta)
\geq T_{\partial S} (T_\gamma^{-1} \delta)$ where $N=4$.}
\label{fig:stabilization}
  \end{center}
\end{figure}
\end{proof}


\section*{Acknowledgement}

The authors would like to thank Ken Baker, John Etnyre, Charlie Frohman, William Kazez and Dale Rolfsen for helpful conversations, and Sam Brensinger for helping with English. 
They especially thank the referee for numerous instructive comments and for pointing out gaps and typos. 
TI was partially supported by JSPS Postdoctoral Fellowships for Research Abroad.
KK was partially supported by NSF grants DMS-1016138 and DMS-1206770.

\end{document}